\newtheorem{thm}{Theorem}[section]
\newtheorem{cor}[thm]{Corollary}
\newtheorem{lem}[thm]{Lemma}
\newtheorem{prop}[thm]{Proposition}
\theoremstyle{definition}
\newtheorem{defn}[thm]{Definition}
\theoremstyle{remark}
\newtheorem{rem}[thm]{Remark}
\numberwithin{equation}{section}
\newcommand{\be}{\begin{equation}}
\newcommand{\ee}{\end{equation}}
\newcommand{\abs}[1]{\left\vert#1\right\vert}
\newcommand{\R}{\mathbb R}
\newcommand{\N}{\mathbb N}
\newcommand{\eps}{\varepsilon}
\newcommand{\loc}{{{\tiny{\mbox{loc}}}}}
\newcommand{\p}{\partial}
\newcommand{\di}{\displaystyle}
\newcommand{\comment}[1]{}
\begin{document}

\title[A vectorial problem with thin free boundary]{A vectorial problem with thin free boundary}
\author{D. De Silva}
\address{Daniela De Silva  \newline \indent Department of Mathematics  \newline \indent  Barnard College, Columbia University,  \newline \indent  New York, NY 10027}
\email{\tt  desilva@math.columbia.edu}
%\author{O. Savin}
%\address{Department of Mathematics, Columbia University, New York, NY 10027}\email{\tt  savin@math.columbia.edu}
\author[G. Tortone]{G. Tortone}\thanks{}
\address{Giorgio Tortone \newline \indent Dipartimento di Matematica
	\newline\indent
	Alma Mater Studiorum Universit\`a di Bologna
	\newline\indent
	 Piazza di Porta San Donato 5, 40126 Bologna}
\email{giorgio.tortone@unibo.it}

\thanks{Work partially supported by the ERC Advanced Grant 2013 n.\ 339958 Complex Patterns for Strongly Interacting Dynamical Systems - COMPAT, held by Susanna Terracini.}
%\subjclass{}%
%\keywords{One-phase free boundary problem; Harnack Inequality; vectorial problem; viscosity solution; improvement of flatness}

%\date{\today}%
%\dedicatory{}%
%\commby{}%
% ----------------------------------------------------------------
\begin{abstract}

We consider the vectorial analogue of the thin free boundary problem introduced in \cite{CRS} as a realization of a nonlocal version of the classical Bernoulli problem. We study optimal regularity, nondegeneracy, and density properties of local minimizers. Via a blow-up analysis based on a Weiss type monotonicity formula, we show that the free boundary is the union of a ``regular" and a ``singular" part. Finally we use a viscosity approach to prove $C^{1,\alpha}$ regularity of the regular part of the free boundary.
\end{abstract}
\maketitle
% ----------------------------------------------------------------

\section{Introduction}

In this paper we are interested in the vectorial version of the minimization problem introduced in \cite{CRS}. Precisely,
given a regular open set $\Omega\subset \R^{n+1}$, we consider the vectorial free boundary problem
\be\label{min}
\text{min}\left\{\int_{\Omega}{\abs{\nabla G}^2\mathrm{d}X} + \mathcal{L}_n(\Omega \cap \{\abs{G}>0\}\cap \R^n)\colon G \in H^1(\Omega,\R^m),\, G=\Phi \text{ on }\partial \Omega \right\},
\ee
with boundary data $\Phi=(\varphi^1,\ldots,\varphi^m)$. Since the free boundary lies on the lower dimensional subspace $\{x_{n+1}=0\}$, such a problem is usually called a \emph{thin} free boundary problem. With a slight abuse of notation, whenever it does not create confusion, we will denote with $G$ both the local minimizer in $\R^{n+1}$ and its trace on $\R^n\times \{0\}$. Since we are developing a local analysis, it is not restrictive to assume $\Omega = B_1\subset \R^{n+1}$. Hence, given a ball $B\subset B_1\subset \R^{n+1}$ and the following class of admissible competitors (say $\Phi \in H^1(\Omega)$),
$$
\mathcal{K}(B)=\{G \in H^{1}(B,\R^m),\, G=\Phi \text{ on }\partial B\},
$$
we say that $G$ is a \emph{local minimizer} of
\be\label{min.functional}
\mathcal{J}(G,B_1) = \int_{B_1}{\abs{\nabla G}^2\mathrm{d}X} + \mathcal{L}_n(B_1 \cap \{|G|>0\}\cap \R^n ),
\ee
in $B_1$ if it minimizes $\mathcal J(G,B)$ in the class of competitors $\mathcal{K}(B)$, for every ball $B\subset B_1$. Similarly $G$ is a \emph{global minimizer} in $\R^{n+1}$ if $G$ is a local minimizer on every ball $B\subset \R^{n+1}$.\\ The scalar one-phase case $G=g \geq 0$, was first investigated by Caffarelli, Roquejoffre and Sire \cite{CRS} in relation with the theory of semi-permeable membranes and models where turbulence
or long-range interactions occur, for example in flame propagation and also in the propagation of surfaces of discontinuities (see \cite{CRS, Duvaut1976} and references therein). Moreover, the authors considered this free boundary problem as the local realization of the classical one-phase problem (also called the Bernoulli type problem) for the
fractional Laplacian.\\
They proved general properties (optimal
regularity, non-degeneracy and classification of global solutions) for local minimizers, corresponding to those proved by Alt and Caffarelli in their pioneering paper \cite{AC} for the standard one-phase Bernoulli  problem (see \cite{DFS,Vel19} for a comprehensive survey of the results in this setting).\\ A major step toward understanding the regularity theory for thin free boundaries was then obtained in \cite{DR}, where the first author and Roquejoffre introduced the notion of viscosity solution to the Euler-Lagrange equation associated to the minimization problem:
\begin{equation}\label{FBintro}\begin{cases}
\Delta g = 0, \quad \textrm{in $B_1^+(g):= B_1 \setminus \{(x,0) : g(x,0)=0\} ,$}\\
\frac{\p g}{\p t^{1/2}}= 1, \quad \textrm{on $F(g):= B_1 \cap \p_{\R^n}\{(x,0) : g(x,0)>0\}$},
\end{cases}\end{equation}
where
$$
\frac{\p }{\p t^{1/2}}g(x_0) = \lim_{(t,z)\to (0,0)} \frac{g(x_0+t\nu(x_0),x_{n+1})}{t^{1/2}}, \quad x_0 \in F(g),
$$
with $\nu(x_0)$ the unit normal to the free boundary $F(g)$ at $x_0$ pointing toward $\{g>0\}$.

%In particular, if  $F(g)$ is $C^2$ then any function $g$ which is harmonic in $B^+_1(g)$ has an asymptotic expansion at a point $ x_0\in F(g),$
%$$g(x,s) = \alpha(x_0) U((x-x_0) \cdot \nu(x_0), s)  + o(|x-x_0|^{1/2}+s^{1/2}).$$
%Here $U(t,s)$ is the real part of $\sqrt{z}$ which
%in the polar coordinates $$t= r\cos\theta, \quad  s=r\sin\theta, \quad r\geq 0, \quad -\pi \leq  \theta \leq \pi,$$ is given by
%\begin{equation}\label{Unew}U(t,s) = r^{1/2}\cos \frac \theta 2. \end{equation}
%Then, the free boundary condition in \eqref{FBintro} requires that $\alpha \equiv 1$ on $F(g).$\\
In \cite{DR} it was proved that in any dimension if the free boundary $F(g)$ is sufficiently ``flat" then it is $C^{1,\alpha}$. Afterwards, in a series of paper \cite{DS1,DS2,DS3} the first author and Savin improved the known results by answering the question of whether Lipschitz free boundaries are smooth. By combining variational and nonvariational techniques, they also showed that local minimizers have smooth free boundary except possibly for a small singular set of Hausdorff dimension $n -3$.\\Recently in \cite{EKPSS} the authors used the Rectifiable-Reifenberg and quantitative stratification framework of Naber-Valtorta to prove Hausdorff measure and structure results for the singular set. We remark that in \cite{AP} the authors removed the sign assumption by considering a two-phase problem with thin free boundary, in the same spirit of the classical work of Alt-Caffarelli-Friedman \cite{ACF}. While in \cite{ACF} it is proved the existence of a common free boundary between the two phases, in \cite{AP} the authors showed that positive and negative phases are always separated by nontrivial dead core.

On the other hand, in \cite{CSY} the authors initiated the study of free boundary problems where several flows are involved, and interact whenever there is a phase transition. Indeed, this problem describes stationary thermal insulation, allowing a prescribed heat loss from the insulating layer.  Similarly, this set-up arose in population dynamics where several species coexist, and overflow the patches. While in the case of competitive systems it is well known that competition gives rise to the so-called junction points, under cooperation the solutions tend to congregate and to show a smooth free boundary.

In  \cite{CSY,MTV1}, the authors considered the classical Bernoulli problem,
\be\label{minloc}
\text{min}\left\{\int_{\Omega}{\abs{\nabla G}^2\mathrm{d}x} + \mathcal{L}_{n+1}(\Omega\cap \{|G|>0\})\colon G \in H^1(\Omega,\R^m),\, G=\Phi \text{ on }\partial \Omega \right\},
\ee
and applied a reduction method to reduce the problem to its scalar counterpart by assuming nonnegativity of the components. This assumption was successively removed in \cite{MTV2}. As expected, in this case the structure of the singular set changes and the set of branching points $\mathrm{Sing}_2(F(G))$ arises, as natural in two-phase problems (see also the recent work \cite{Guido}). \\
 Recently, in \cite{DT} we developed a vectorial viscosity approach to understand the regularity of the free boundary in \eqref{minloc}, which takes advantage of the fact that the norm $\abs{
G}$ is a viscosity subsolution to the scalar one-phase problem.

 We also remark that as pointed out in \cite{MTV1,KL1,KL2}, problem \eqref{minloc} is related to a class of shape optimization problems involving the eigenvalues of the Dirichlet Laplacian.\\

\subsection{Main results and organization of the paper.} In the first part of this paper, we consider the classical  questions of optimal regularity, non-degeneracy, and density estimates for local minimizers to \eqref{min.functional}. Then, we derive a Weiss-type monotonicity formula, which allows us to use a blow-up analysis and characterize global blow-up limits, which in turn leads to the definition of the regular and singular part of the free boundary. In the second part of the paper, we prove that local minimizers are viscosity solutions (see Section \ref{visco} for the precise definition) of the vectorial thin one-phase problem ($A_0>0$ a precise dimensional constant):
\be \begin{cases} \label{VOP}
\Delta G =0 & \text{in $B_1^+(|G|):= B_1 \cap \{(x,0) :\abs{G(x,0)}>0\}$;}\\
 \frac{\partial}{\partial t^{1/2}} |G|=A_0 & \text{on $F(G):=B_1 \cap \p_{\R^n}\{(x,0) \colon \abs{G(x,0)}>0\}.$}
\end{cases}\ee
Thus, the analysis of the regular part of the free boundary can be carried out with the viscosity methods developed in \cite{DR, DT}.
Combining the two parts we obtain the following main theorem.
\begin{thm}\label{mmm}
The problem \eqref{min} admits a solution $G \in H^1(B_1;\R^m)$. Moreover, any solution is locally $C^{0,1/2}$-H\"{o}lder continuous in $B_1$ and the set $\{|G|>0\}\cap \{x_{n+1}=0\}$ has a locally finite perimeter in $B_1\cap \{x_{n+1}=0\}$. More precisely, the free boundary $F(G)$ is a disjoint union of a regular part $\mathrm{Reg}(F(G))$ and one-phase singular set $\mathrm{Sing}(F(G))$:
\begin{enumerate}
  \item[1.] $\mathrm{Reg}(F(G))$ is an open subset of $F(G)$ and is locally the graph of a $C^{1,\alpha}$ function.
\item[2.] $\mathrm{Sing}(F(G))$ consists only of points in which the Lebesgue density of $\{|G|>0\}\cap \{x_{n+1}=0\}$ is strictly between $1/2$ and $1$. Moreover, there is $n^* \geq 3$ such that:
    \begin{itemize}
      \item if $n < n^*$, then $\mathrm{Sing}(F(G))$ is empty;
\item if $n = n^*$, then $\mathrm{Sing}(F(G))$ contains at most a finite number of isolated points;
\item if $n > n^*$, then the $(n - n^*)$-dimensional Hausdorff measure of $\mathrm{Sing}(F(G))$ is locally finite in $B_1\cap \{x_{n+1}=0\}$.
    \end{itemize}
\end{enumerate}
\end{thm}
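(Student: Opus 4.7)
The plan is to decompose the theorem into its natural pieces and attack each with the analogue of the classical scalar techniques, modified to exploit the fact that only the scalar quantity $|G|$ interacts with the thin penalization term. First, existence is obtained by the direct method: take a minimizing sequence $G_k$ with fixed boundary datum $\Phi$, use the bound on $\int |\nabla G_k|^2$ to extract a weak $H^1$ limit $G$, pass the Dirichlet energy to the limit by lower semicontinuity, and pass the penalization $\mathcal{L}_n(\{|G_k|>0\}\cap \R^n)$ to the limit by lower semicontinuity on the open set $\{|G|>0\}\cap\R^n$ (using that the trace on $\R^n$ converges strongly in $L^2_{\loc}$, up to subsequence). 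Optimal $C^{0,1/2}$ regularity then follows by showing that each component $G^i$, and hence $|G|$, can be compared with its harmonic extension on balls touching the free boundary; the competitor estimate gives a bound $|G(X)| \leq C\,\mathrm{dist}(X,F(G))^{1/2}$. Nondegeneracy is the reverse inequality, obtained by testing with a truncation of $G$ at a small scale, which forces $\sup_{B_r}|G|\geq cr^{1/2}$; combining the two yields the density estimates for $\{|G|>0\}\cap \R^n$ at points of $F(G)$.

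Next, I would set up a Weiss-type functional $W(r,G,x_0)$ which is monotone in $r$ on minimizers. With the density estimates and $C^{0,1/2}$ bound in hand, blow-up sequences $G_{r_k,x_0}(X)=r_k^{-1/2}G(x_0+r_k X)$ are pre-compact in $C^{0,1/2}_{\loc}\cap H^1_{\loc}$, and the Weiss monotonicity forces any blow-up limit to be a $1/2$-homogeneous global minimizer. Applying the reduction $|G|$ to the scalar thin one-phase setting of \cite{CRS} (using that $|G|$ is subharmonic on $B_1^+$ and satisfies the scalar Bernoulli condition at $F(G)$), each blow-up has $|G_0|$ equal to a scalar $1/2$-homogeneous minimizer; the classification into half-plane solutions and nontrivial cones gives the partition $F(G)=\mathrm{Reg}(F(G))\sqcup \mathrm{Sing}(F(G))$ according to whether the density of $\{|G|>0\}\cap \R^n$ equals $1/2$ or lies strictly between $1/2$ and $1$.

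For the $C^{1,\alpha}$ regularity of $\mathrm{Reg}(F(G))$, the idea is to show that at a regular point the blow-up is a half-plane solution, so on small scales $G$ is flat in the vectorial sense of \cite{DT}. I would then verify that local minimizers are viscosity solutions of \eqref{VOP} by comparing $G$ and $|G|$ with smooth strict sub/supersolutions using the minimality property (this is the standard thin Euler--Lagrange derivation, with the constant $A_0$ coming from the competitor computation), so that the flatness implies $C^{1,\alpha}$ graph regularity by the viscosity improvement-of-flatness result carried over from \cite{DR,DT}.

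Finally, the dimension estimate of $\mathrm{Sing}(F(G))$ is obtained by a Federer-type dimension reduction, for which the Weiss monotonicity formula is exactly the tool needed: homogeneous global minimizers with a translation invariance reduce to lower-dimensional ones, and one defines $n^*$ as the smallest dimension admitting a nontrivial $1/2$-homogeneous singular minimizer. Standard arguments then give emptiness for $n<n^*$, local finiteness for $n=n^*$, and the Hausdorff dimension bound $n-n^*$ for $n>n^*$. The main obstacle I expect is the derivation and exploitation of the Weiss formula in the vectorial setting: unlike the scalar problem, $|G|$ is not itself a minimizer (only a subsolution), so one must justify the monotonicity of $W$ directly for the vector-valued minimizer $G$ and then transfer rigidity information from $|G|$ back to $G$. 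Once this link is cleanly established, the classification of blow-ups and the whole stratification follow from the scalar theory.
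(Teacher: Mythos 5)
Your outline tracks the paper's architecture almost step for step (direct method for existence, harmonic replacement for the $C^{0,1/2}$ bound, truncation competitors for nondegeneracy, Weiss monotonicity, blow-up classification, viscosity/improvement-of-flatness for $\mathrm{Reg}$, Federer reduction for $\mathrm{Sing}$), but there is one genuine gap, and it is exactly the point you flag at the end as ``the main obstacle'' without resolving it. The issue is not the derivation of the Weiss formula for the vector minimizer --- that is done directly by comparing $G$ with the $1/2$-homogeneous extension of its trace on $\partial B_r$, and $|G|$ never enters. The real missing step is the classification of blow-up limits: you assert that ``each blow-up has $|G_0|$ equal to a scalar $1/2$-homogeneous minimizer'' by appealing to the subharmonicity of $|G|$ and a ``scalar Bernoulli condition.'' But subharmonicity only makes $|G|$ a \emph{subsolution} of the scalar problem, and a subsolution inherits no minimality; moreover $|G|$ does not satisfy the scalar free boundary condition in both directions (in the paper's viscosity formulation the supersolution test is applied to $|G|$ but the subsolution test must be applied to the components $\langle G, f\rangle$, precisely because of this asymmetry). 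Without more, you cannot transfer the vectorial minimality of $G_0$ to scalar minimality of $|G_0|$, and the entire reduction to the scalar classification (half-plane solutions vs.\ singular cones, the definition of $n^*$, the density dichotomy) collapses.

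The missing idea is a \emph{synchronization} lemma: every $1/2$-homogeneous blow-up limit has the form $G_0=\xi\,|G_0|$ for a fixed unit vector $\xi\in\R^m$, i.e.\ all components are proportional. Once this is known, plugging competitors of the form $\xi\tilde g$ into the vectorial functional shows that $|G_0|$ is a genuine global minimizer of the scalar thin one-phase functional, and the scalar theory applies. The proof of synchronization is spectral: each component of $G_0$ restricted to $S^n$ is an eigenfunction of the Laplace--Beltrami operator with mixed Neumann (on the positivity set) and Dirichlet (on the zero set) conditions, with eigenvalue $\lambda(1/2)$ determined by the homogeneity; since the homogeneity $1/2$ is below $1$, such eigenfunctions cannot change sign and must all be multiples of the principal eigenfunction of the common spherical domain, forcing proportionality of the components. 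You should supply this argument (or an equivalent one) before invoking the scalar classification; as written, the step from the vector blow-up to the scalar dichotomy is unjustified.
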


 As already remarked, in the local analogue \cite{MTV2} the authors proved the existence of a closed set of locally finite $(n-1)$-Hausdorff measure of branching point $\mathrm{Sing}_2(F(G))$. This set
 consists only of points in which the Lebesgue density of
the positivity set $\{|G|>0\}$ is $1$ and the blow-up limits are linear functions: this blow-up analysis implies that cusps pointing inwards, might appear. While in the local case this feature is natural, in the thin case the picture changes.
Indeed, as pointed out earlier in the thin two-phase problem the positive and negative phases are always separated thus the problem reduces locally back to a one-phase problem.

The main theorem of the second part is independent of the minimization problem and it reads as follows.
\begin{thm}\label{main.visc}
Let $G$ be a viscosity solution to \eqref{VOP} in $B_1$. There exists a universal constant $\bar\eps>0$ such that if $G$ is $\bar \eps$-flat in the $(f,\nu)$-directions in $B_1$, i.e. for some unit directions $f\in \R^m, \nu \in \R^n$
$$%\be \label{flat}
|G(X) - U(\langle x,\nu\rangle,x_{n+1}) f| \leq  \bar\eps \quad \text{in $B_1$,}
$$%\ee
and
$$%\be\label{nondegenra}
 |G|(x,0) \equiv 0 \quad \text{in $B_1 \cap \{\langle x, \nu \rangle  < - \bar\eps\}$},
 $$%\ee
 then $F(G)\in C^{1,\alpha}$ in $B_{1/2}$.
\end{thm}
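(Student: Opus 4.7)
The plan is to follow the viscosity scheme developed by the first author and Roquejoffre \cite{DR} in the scalar thin case and by the authors \cite{DT} in the vectorial classical case, adapting it to the present vectorial thin setting. The overall strategy is a flatness improvement: show that there exist universal constants $\rho, \eps_0 \in (0,1)$ and $C$ such that if $G$ is $\eps$-flat in the $(f,\nu)$-directions in $B_1$ with $\eps \le \eps_0$, then $G$ is $(\eps\rho/2)$-flat in the $(f', \nu')$-directions in $B_\rho$, with $|f-f'|+|\nu-\nu'| \le C\eps$. Rescaling $\tilde G(X) = G(\rho X)/\sqrt{\rho}$ (the natural scaling for the thin problem) and iterating dyadically then produces the claimed $C^{1,\alpha}$ regularity of $F(G)$ in $B_{1/2}$.

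To prove the one-step flatness improvement I would follow the standard three-step program: partial Harnack inequality, linearization, and improvement. For the partial Harnack step, the key observation is that $|G|$ is a viscosity subsolution to the scalar thin one-phase problem, so the arguments of \cite{DR} (Harnack inequality up to a thin free boundary, using barriers built out of $U$) apply to the scalar flatness of $|G|$ and yield that the normalized error
\[
\tilde w_\eps(X) \;=\; \frac{|G|(X) - U(\langle x,\nu\rangle, x_{n+1})}{\eps}
\]
has equi-H\"older control in $B_{1/2}$. For the vectorial content, I would simultaneously control the transverse components: writing $G = |G|\,\omega$ with $\omega$ a unit vector in $\R^m$ where $|G|>0$, the vectors $\omega-f$ must be small because of $\eps$-flatness, and on the positivity set each component $g^i$ is harmonic, so the components of $(G-Uf)/\eps$ orthogonal to $f$ are (almost) harmonic functions with Neumann conditions on $\{(x,0):\langle x,\nu\rangle>0\}$ (since $\partial_{t^{1/2}}$ of them must vanish to leading order). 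A standard Arzel\`a--Ascoli/compactness argument then extracts a limit as $\eps \to 0$.

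The limiting linearized problem decouples: the scalar part $\tilde w$ solves
\[
\begin{cases}
\Delta \tilde w = 0 & \text{in } B_{1/2}\setminus\{x_{n+1}=0,\, \langle x,\nu\rangle \le 0\},\\
\partial_{x_{n+1}} \tilde w = 0 & \text{on } \{x_{n+1}=0,\, \langle x,\nu\rangle > 0\},
\end{cases}
\]
(or equivalently a half-Laplacian equation on $\{\langle x,\nu\rangle>0\}\cap\{x_{n+1}=0\}$), while the components of $(G-|G|f)/\eps$ transverse to $f$ solve the same linear problem. Invoking the $C^{1,\alpha}$ regularity of such linearized problems from \cite{DR}, both limits are well approximated by affine functions at the origin; the scalar linear approximation identifies the new normal $\nu'$ and the vectorial one identifies the new direction $f'$, with the quantitative bounds $|f-f'|+|\nu-\nu'|\le C\eps$. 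Translating these approximations back via $U(\langle x,\nu'\rangle, x_{n+1})f'$ and using that $U$ has gradient bounded by $t^{-1/2}$ gives, after a contradiction/compactness argument, the improved flatness in $B_\rho$.

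The main obstacle will be the vectorial partial Harnack inequality: the scalar Harnack for $|G|$ near a thin free boundary is by now standard, but one must simultaneously transfer the improved oscillation control to the vector-valued $G$, i.e.\ control of the transverse components $G-\langle G,f\rangle f$ up to the free boundary. This is delicate because these components do not individually have a sign or a one-phase structure; they are only harmonic in $B_1^+(|G|)$ and their boundary behaviour is coupled to $|G|$ only through the relation $|G|^2 = \sum_i (g^i)^2$. The argument of \cite{DT} in the classical vectorial case, based on the subsolution property of $|G|$ combined with linear estimates for the transverse components under a Neumann-type condition on the positivity set, adapts to the thin setting once the correct scaling ($t^{1/2}$ instead of $t$) and the correct limiting linearized problem are identified, but writing out the corresponding barrier construction at a thin free boundary point is where the essential technical work lies.
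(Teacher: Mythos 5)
The central device of the paper's proof is missing from your proposal, and its absence makes the Harnack step fail. You plan to run the partial Harnack inequality on the quantity $(|G|-U)/\eps$ using only that $|G|$ is a viscosity subsolution; but a subsolution by itself only yields the \emph{upper} bound in the Harnack dichotomy (improving $b_0$ to $b_1$), not the lower one. A genuine Harnack step needs a supersolution. The paper's key observation (Proposition \ref{positive1}) is that under the flatness hypothesis the dominant component $g^1=\langle G,f^1\rangle$ is \emph{strictly positive} in $B_1^+(G)$; being harmonic in $B_1^+(G)$, positive, and vanishing continuously on $\{|G|=0\}$, it is a viscosity \emph{supersolution} of the scalar thin problem. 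One then traps
$$U(X+\eps a_0 e_n)\;\le\; g^1\;\le\; |G|\;\le\; U(X+\eps b_0 e_n),$$
improves the lower bound through $g^1$ (supersolution) and the upper bound through $|G|$ (subsolution), and uses the smallness of the transverse components to force $g^1$ and $|G|$ to have a common $\eps$-domain-variation limit. This is what reduces the vectorial problem to a scalar one and is not obtained by any part of your sketch. Your substitute $G=|G|\omega$, arguing that $\omega-f$ is small, is circular near the free boundary: $\omega$ degenerates exactly where $|G|\to 0$, which is precisely where the control is needed. The paper instead bounds the transverse components by a barrier, $|g^i|\le C\eps\,U(X+\eps e_n)$ (Lemma \ref{translate}), which vanishes at the right rate at $F(G)$.

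Two further (more technical) mismatches. First, the transverse components should be normalized by $\eps^{3/4}$, not by $\eps$; the exponent $3/4$ is not cosmetic but is what allows the iteration to close (it reappears in both the hypothesis \eqref{non_d1} and the conclusion \eqref{non_dimpr} of the improvement-of-flatness lemma). Normalizing by $\eps$ as you propose does not give a quantity that both stays bounded and improves under rescaling. Second, the scalar linearization and the transverse one are not ``the same linear problem'': the scalar part is the linearized free boundary problem $\Delta(U_n w)=0$ with $|\nabla_r w|=0$ on $L$ for the $\eps$-domain variation, while the normalized transverse components converge to ordinary harmonic functions in $B\setminus P$ vanishing on $L$, whose leading term $M_i U$ is what determines the rotated direction $\bar f^1$. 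Your overall architecture (Harnack $\to$ linearization $\to$ flatness improvement $\to$ iteration) matches the paper, but without the positivity of $g^1$, its role as a supersolution, the two-sided trapping, the barrier bound on the transverse components, and the $\eps^{3/4}$ normalization, the core steps as written would not go through.
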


In the viscosity setting, differently from the local case treated in \cite{DT}, the reduction from the vector valued  problem to the scalar one, is almost straightforward and hence most technical results leading to the proof of Theorem \ref{main.visc} follow from slight modifications of those in \cite{DR} (see Subsection 6.1).

\begin{rem}
As in the scalar case, in light of the extension facts related to the half-laplacian our theory applies, among others, to vectorial Bernoulli type problem involving non local energies, like, for instance the solutions to the following problem
(when $s = 1/2$):
$$
\text{min}\left\{\sum_{i=1}^m\int_{\R^{2n}}{\frac{\abs{g^i(x)-g^i(y)}^2}{\abs{x-y}^{n+2s}}\mathrm{d}x
\mathrm{d}y} + \mathcal{L}_n(\mathcal{B}_1 \cap \{\abs{G}>0\})\colon
\begin{aligned}
&g^i \equiv \varphi^i \text{on $\R^n\setminus \mathcal{B}_1$}\\ &\text{for $i=1,\dots,m$}
\end{aligned}\right\},
$$
where $\mathcal{B}_1=B_1\cap \{x_{n+1}=0\}, \Phi = (\varphi^1,\dots,\varphi^m) \in H^{1/2}(\R^n; \R^m)$ is the boundary data in $\R^n\setminus \mathcal{B}_1$. In this regard,
our results extend the theory of vectorial free boundary problem to the fractional case.\\
In the same spirit of \cite{MTV1,KL1,KL2}, since our methodologies are quite robust, we believe that our results can be extended to the case of almost minimizer of \eqref{min} and suitably adapted to the nonlocal shape optimization problem
$$
\mbox{min}\left\{\sum_{i=1}^m \lambda^s_i(A)\colon \text{$A\subset \Omega$ $s$-quasi open }, |A|\leq c\right\},\quad\text{with }\lambda_i^s(A)=\min_{u \perp E_{i-1}} \frac{[u]^2_{H^s(\R^N)}}{\|u\|^2_{L^2(\R^N)}},
$$
where $\Omega\subset \R^n$ open and bounded, $c<|\Omega|$ and $E_{i}\subset H^s_0(A)$ is the space spanned by the first $i$ eigenfunctions (see \cite{BRS} for more details in this direction). \\
We refer to \cite{DSalmost} for the theory of almost minimizers to the scalar thin-one phase problem.
\end{rem}

The paper is organized as follows:
in Section \ref{local} we study the local behavior of minimizers near the free boundary by answering the classical  questions of optimal regularity, non-degeneracy and density estimates for local minimizers. Then, in Section \ref{weiss.sect} we derive a Weiss-type formula which will allows in Section \ref{blow} to characterize global blow-up limits. The blow-up analysis of Section \ref{blow} will lead to the definition of the regular $\mathrm{Reg}(F(G))$ and singular part $\mathrm{Sing}(F(G))$ of the free-boundary. Finally, in the remaining part of the paper we will use a viscosity approach to obtain $C^{1,\alpha}$ smoothness of the regular part $\mathrm{Reg}(F(G))$. First, in Section \ref{visco} we introduce a vector valued analogue of the notion of viscosity solution of \cite{DR} and we prove that local minimizers are viscosity solution. In Section \ref{harnack.sect} we develop a vectorial Harnack inequality, which will be the basic tool for our analysis of the regular part of the free boundary. Finally, in Section \ref{final} we prove the improvement of flatness result, from which the $C^{1,\alpha}$ regularity of a flat free boundary follows by standard arguments.

\subsection{Notation.} From now on, we denote by $\{e_i\}_{i=1,\ldots,n}$ and $\{f^i\}_{i=1,\ldots,m}$ canonical basis in $\R^n$ and $\R^m$ respectively. Unit directions in $\R^n$ and $\R^m$ will be typically denoted by $e$ and $f$. The Euclidean norm in either space is denoted by $|\cdot|$ while the dot product is denoted by $\langle \cdot, \cdot \rangle$.\\
A point $X \in \R^{n+1}$ will be denoted by $X=(x,x_{n+1})\in \R^n\times \R$ and we will use the notation $x=(x',x_n)$. Moreover, a ball in $\R^{n+1}$ with radius $r>0$ centered at $X$ is denoted by $B_r(X)$ and for simplicity $B_r=B_r(0)$. Also, we use $\mathcal{B}_r = B_r\cap \{x_{n+1}=0\}$ to denote the $n$-dimensional ball in $\R^n\times \{x_{n+1}=0\}$.\\ We will often consider the following sets: let $g$ be a continuous non-negative function in $B_r$, then
\begin{align*}
B^+_r(g) &:= B_r \setminus \{(x,0)\colon g(x,0)=0\} \subset \R^{n+1}\\
\mathcal{B}^+_r(g) &:= B^+_r(g) \cap \{x_{n+1}=0\}\subset \R^{n}.
\end{align*} By abuse of notation, if $G=(g^1,\ldots,g^m)$ is a vector valued continuous function, we use $B_r^+(G), \mathcal B_r^+(G)$ in place of $B^+_r(|G|), \mathcal B^+_r(|G|)$ respectively.
Also, we will denote with $P$ and $L$ respectively the half-hyperplane $P:= \{X \in \R^{n+1}\colon x_n\leq 0, x_{n+1} =0 \}$ and $L:= \{X \in \R^{n+1}\colon x_n= 0, x_{n+1} =0 \}$.

In regard to the problem \eqref{FBintro}, we remark that if  $F(g)$ is $C^2$ then any function $g$ which is harmonic in $B^+_1(g)$ has an asymptotic expansion at a point $ x_0\in F(g),$
$$g(x,s) = \alpha(x_0) U((x-x_0) \cdot \nu(x_0), s)  + o(|x-x_0|^{1/2}+s^{1/2}).$$
Here $U(t,s)$ is the real part of $\sqrt{z}$ which
in the polar coordinates $$t= r\cos\theta, \quad  s=r\sin\theta, \quad r\geq 0, \quad -\pi \leq  \theta \leq \pi,$$ is given by
\begin{equation}\label{Unew}U(t,s) = r^{1/2}\cos \frac \theta 2. \end{equation}
Then, the free boundary condition in \eqref{FBintro} requires that $\alpha \equiv 1$ on $F(g).$

The function $U$ plays a fundamental role in our analysis.\\

Throughout the paper we will often used the invariance of a local minimizer of \eqref{min} with respect to translations and dilations. More precisely, fixed $X_0 \in F(G)$ we define as the blow-up sequence of $G$ centered at $X_0$ the family
\be\label{blow.up}
G_{X_0,r}(X)=\frac{1}{r^{1/2}}G(X_0+rX) %= \left(\frac{1}{r^s}u^1(X_0+rX),\dots,\frac{1}{r^s}u^m(X_0+rX)\right)
\ee
with $r>0$. Indeed, for every $R>0$ we get
\be\label{rescale.J}
\mathcal{J}(G,B_R(X_0)) = r^n \mathcal{J}(G_{X_0,r},B_{R/r})).
\ee
Consequently, $G$ is a local minimizer of $\mathcal{J}(\cdot,B_R(X_0))$ if and only if $G_{X_0,r}$ is a local minimizer of $\mathcal{J}(\cdot, B_{R/r})$.
As in \cite{DR} it is not restrictive to reduce the analysis to the case of minimizers that are symmetric with respect to the $x_{n+1}$-variable. Indeed, if $G_e =(g^1_e,\dots,g^m_e)$ is the even part of $G$ with respect to $\{x_{n+1}=0\}$,
$$
g^i_e(x,x_{n+1})=\frac{g^i(x,x_{n+1})+g^i(x,-x_{n+1})}{2},\quad \text{for $i=1,\dots,m$}
$$
we get
$$
\mathcal{J}(G,B_1)=\mathcal{J}(G_e,B)+\int_{B_1}\abs{\nabla G_o}^2\mathrm{d}X,
$$
with $G_o=G-G_e$ the odd part of $G$ with respect to $\{x_{n+1}=0\}$. By the minimality of $G$, for every $V \in H^1(B;\R^m), V=G_e+G_o$ on $\partial B$ we have
$$
\mathcal{J}(G_e,B) \leq \mathcal{J}(V,B) -\int_{B_1}\abs{\nabla G_o}^2\mathrm{d}X \leq \mathcal{J}(V-G_o,B),
$$
which proves our claim once we noticed that $V-G_o=G_e$ on $\partial B$. Thus, throughout the paper this will be tacitly assumed.

\section{Local behavior of solutions}\label{local}
Since the existence of an optimal vector for problem \eqref{min} is nowadays standard (see \cite{CRS}[Proposition 3.2]), we start by focusing on the properties of local minimizers and obtain in this section, optimal regularity, non-degeneracy, and density estimates. We use the notation from Subsection 1.1.

\subsection{Optimal Regularity}
Our proof follows the lines of the scalar case in \cite{CRS} and it is based on an harmonic replacement of each components of the minimizing vector $G=(g^1,\ldots,g^m)$.

First, we make the following basic observation.

\begin{lem}\label{lem1}If $G$ is a local minimizer in $B_1$, then $g^i$ harmonic in $B_1 \setminus \{x_{n+1}=0\}.$\end{lem}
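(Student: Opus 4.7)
The plan is to exploit the fact that the only non-Dirichlet contribution in $\mathcal{J}$ is the measure term $\mathcal{L}_n(B_1\cap\{|G|>0\}\cap\R^n)$, which depends only on the trace of $G$ on the thin space $\{x_{n+1}=0\}$. Consequently, any perturbation of $G$ supported in an open set that does not meet $\{x_{n+1}=0\}$ leaves this term untouched, and minimality reduces locally to minimality of the Dirichlet energy.

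Concretely, I would fix an index $i\in\{1,\dots,m\}$ and an arbitrary ball $B\Subset B_1\setminus\{x_{n+1}=0\}$. Let $\tilde g^i\in H^1(B)$ be the harmonic replacement of $g^i$ in $B$, i.e.\ the solution of $\Delta \tilde g^i=0$ in $B$ with $\tilde g^i=g^i$ on $\partial B$, and set $\tilde G:=(g^1,\dots,g^{i-1},\tilde g^i,g^{i+1},\dots,g^m)$. Then $\tilde G\in\mathcal{K}(B)$. The key observation is that since $B$ is disjoint from $\{x_{n+1}=0\}$, the trace of $\tilde G$ on the thin space coincides with that of $G$, hence
\[
\mathcal{L}_n\bigl(B\cap\{|\tilde G|>0\}\cap\R^n\bigr)=\mathcal{L}_n\bigl(B\cap\{|G|>0\}\cap\R^n\bigr)=0.
\]
Plugging $\tilde G$ into the local minimality inequality $\mathcal{J}(G,B)\le\mathcal{J}(\tilde G,B)$ and cancelling the contributions of the components $g^j$ with $j\ne i$ as well as the (vanishing) measure terms, one is left with
\[
\int_B|\nabla g^i|^2\,\mathrm{d}X\;\le\;\int_B|\nabla \tilde g^i|^2\,\mathrm{d}X.
\]

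Since $\tilde g^i$ is the Dirichlet minimizer in $B$ with boundary values $g^i|_{\partial B}$, the reverse inequality holds by definition, and strict convexity of the Dirichlet energy forces $g^i\equiv\tilde g^i$ in $B$. Therefore $g^i$ is harmonic on $B$; as $B$ was an arbitrary ball compactly contained in $B_1\setminus\{x_{n+1}=0\}$, one concludes that $g^i$ is harmonic on all of $B_1\setminus\{x_{n+1}=0\}$.

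There is no real obstacle here: the argument is the standard harmonic-replacement device, and the only point worth emphasizing is the decoupling of the components — because the measure term sees only $|G|$ through its support, and because the support is unchanged when we alter $G$ away from the thin space, each component can be varied independently on such balls. This is exactly what the vectorial structure allows, and it is why the conclusion is componentwise harmonicity rather than merely harmonicity of $|G|$.
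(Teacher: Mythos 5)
Your argument is correct and rests on exactly the same key observation as the paper's proof: perturbations of a single component supported away from $\{x_{n+1}=0\}$ leave the measure term untouched, so local minimality of $\mathcal{J}$ reduces to local minimality of the Dirichlet energy of each $g^i$. The paper reaches the conclusion via a first-variation computation with test functions in $C_0^\infty(B_1^\pm)$, while you use harmonic replacement and strict convexity of the Dirichlet energy — a routine technical variant of the same idea.
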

\begin{proof} Denote by $B^+_1:= B_1 \cap \{x_{n+1}>0\}$, and similarly $B_1^-:=B_1 \cap \{x_{n+1}<0\}$.
Let $\varphi$ be in  $C_0^\infty(B_1^+)$ and call $G^\pm_i:= (g^1,\ldots, g^i \pm \eps \varphi, \ldots, g^m)$, for any $i=1,\ldots,m$.  Thus, since $G \equiv G^\pm_i$ on $\{x_{n+1}=0\}$, the minimality of $G$,  $$\mathcal{J}(G, B_1) \leq \mathcal{J}(G^\pm_i, B_1)$$ implies $$\int_{B_1} |\nabla g^i|^2 \mathrm{d}X \leq \int_{B_1} |\nabla(g^i\pm\eps \varphi)|^2 dX$$ and hence $$\int_{B_1} \langle \nabla  g^i ,\nabla \varphi \rangle \mathrm{d}X = 0, $$ that is $g^i$ is harmonic in $B_1^+$. We argue similarly for $\varphi \in C_0^\infty(B_1^-).$
\end{proof}

We now prove the optimal regularity.

\begin{prop}\label{uniform.reg}
  Let $G$ be a local minimizer, then $G \in C^{0,1/2}(K;\R^m)$, for every compact set $K\subset B_1$. Moreover,
  \be\label{holder}
  |g^i(X)| \leq C_1 \mathrm{dist}\left(X, \partial \{\abs{G}>0\}\right)^{1/2} \quad\text{in $B_{1/2}$},
  \ee
  for every $i=1,\dots,m$, with $C_1>0$ a universal constant.
\end{prop}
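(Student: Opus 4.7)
My plan is to adapt the scalar argument of \cite{CRS} to the vectorial setting, working with the scalar quantity $|G|$, which is subharmonic as the norm of a vector of harmonic functions.

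\emph{Step 1 (Sharpened harmonicity).} I would first strengthen Lemma \ref{lem1}: at any point $(x_0,0)$ with $|G|(x_0,0)>0$, perturbations of a single component $g^i$ supported in a neighborhood where $|G|$ stays positive leave the measure term in \eqref{min.functional} unchanged, so the Euler--Lagrange equations give $\Delta g^i=0$ across the hyperplane. Hence each $g^i$ is harmonic on the full positivity set $B_1^+(G)\subset\R^{n+1}$, and consequently $|G|^2$---and therefore $|G|$---is subharmonic on $B_1^+(G)$.

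\emph{Step 2 (Harmonic-replacement energy bound).} For any $X_0\in B_{1/2}$ and $r\in(0,1/4)$ with $B_r(X_0)\subset B_1$, let $\tilde G$ be the component-wise harmonic replacement of $G$ in $B_r(X_0)$. Minimality of $G$, combined with the $H^1$-orthogonality of $\tilde G$ and $G-\tilde G$, yields
\begin{equation*}
\int_{B_r(X_0)}|\nabla(G-\tilde G)|^2\,\mathrm{d}X \,\leq\, \mathcal{L}_n(\mathcal{B}_r(X_0))\,\leq\, C r^n.
\end{equation*}

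\emph{Step 3 (Sharp boundary growth).} The core estimate is $\sup_{B_r(Y_0)}|G|\leq C r^{1/2}$ for every $Y_0\in F(G)$ and every $r\leq 1/2$. The natural rescaling \eqref{blow.up} reduces this to a uniform $L^\infty$ bound at unit scale. I would argue by contradiction: if there exist $r_k\to 0$ and $Y_k\in F(G)$ with $M_k := \sup_{B_{r_k}(Y_k)}|G| \geq k\, r_k^{1/2}$, the rescalings $\tilde G_k(X) = M_k^{-1}G(Y_k+r_kX)$ are uniformly bounded by $1$ on $B_1$, have components harmonic in $B_1\setminus\{x_{n+1}=0\}$, and minimize a functional whose measure-term coefficient $r_k/M_k^2 \leq 1/k^2\to 0$. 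A $C^0_\mathrm{loc}$ subsequential limit $\tilde G_\infty$ is then harmonic on $B_1$, vanishes at $0$, satisfies $\sup_{B_1}|\tilde G_\infty|=1$, and $|\tilde G_\infty|$ vanishes on a portion of $\{x_{n+1}=0\}$ of positive capacity near $0$---this last fact coming from the lower semicontinuity of the measure term in the vanishing-penalty limit together with the non-degeneracy of local minimizers proved separately in this section. Classical boundary estimates for non-negative subharmonic functions vanishing on such a set force $|\tilde G_\infty(X)|\lesssim |X|^{1/2}$ near $0$, contradicting $\sup_{B_1}|\tilde G_\infty|=1$ after a further dyadic rescaling.

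Estimate \eqref{holder} then follows from the growth bound in Step 3 applied at $Y_0\in F(G)$ closest to $X_0$, with $r=2\,\mathrm{dist}(X_0,\partial\{|G|>0\})$; interior harmonic estimates on the positivity set upgrade it to $C^{0,1/2}$ H\"older continuity on any compact subset of $B_1$. The main difficulty is Step 3, precisely ensuring that the limiting zero set is non-trivial so that the sharp boundary decay forces the contradiction. Because $|G|$ is only subharmonic (not itself a minimizer), the reduction to the scalar case of \cite{CRS} is not automatic; it requires either a careful upper-envelope comparison with the extremal profile $U$ of \eqref{Unew}, or a quantitative non-degeneracy argument to rule out collapse of the coincidence set in the blow-up limit.
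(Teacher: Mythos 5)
Your Step 2 is exactly the paper's starting point (component-wise harmonic replacement plus minimality gives $\int_{B_r}|\nabla(g^i-\tilde g^i_r)|^2\,\mathrm{d}X\le \omega_n r^n$), but from there the paper does not pass through a blow-up: it runs a direct Morrey--Campanato iteration, using that $|\nabla \tilde g^i_\rho|^2$ is subharmonic to propagate the estimate through dyadic scales and obtain $\int_{B_r}|\nabla g^i|^2\,\mathrm{d}X\le \tilde C r^n$ at every point and every scale, whence $C^{0,1/2}$ by Morrey embedding. Your Step 3 replaces this with a compactness/contradiction argument, and as written it has two genuine gaps. First, the contradiction does not close: you normalize $\tilde G_k=M_k^{-1}G(Y_k+r_k\,\cdot)$ with $M_k=\sup_{B_{r_k}(Y_k)}|G|$, so the limit satisfies $\sup_{B_1}|\tilde G_\infty|=1$ and (at best) $|\tilde G_\infty(X)|\lesssim|X|^{1/2}$ near the origin --- but these two facts are perfectly compatible, since the supremum may be attained near $\partial B_1$. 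To make this scheme work you would need a doubling-type selection of the scales $r_k$ (a ``maximal bad scale'' choice) guaranteeing control of $\tilde G_k$ on balls larger than $B_1$, so that the decay at the origin can be played against a quantitative lower bound on a fixed smaller ball; ``a further dyadic rescaling'' of the limit does not supply this. Relatedly, if $\tilde G_\infty$ really were harmonic across $\{x_{n+1}=0\}$ on all of $B_1$, it would be smooth and the relevant decay at an interior zero is linear, not $|X|^{1/2}$; the $t^{1/2}$ growth is a slit-domain phenomenon and requires keeping the zero set as part of the boundary of the domain of harmonicity in the limit.

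Second, and more structurally, your argument for the non-triviality of the limiting zero set appeals to ``the non-degeneracy of local minimizers proved separately in this section.'' In this paper that is circular: the proof of Proposition \ref{non-deg1} explicitly uses the $C^{0,1/2}$ regularity you are trying to establish (it invokes the optimal regularity to get uniform bounds and Lipschitz estimates on the harmonic components before running the comparison with the barrier). So non-degeneracy is not available at this stage. This is precisely why the paper's route is preferable here: the energy-decay iteration is purely variational, requires no compactness, no classification of limits, and no information about the zero set, and it delivers the uniform Morrey bound \eqref{morrey} from which both the H\"older continuity and \eqref{holder} follow at once.
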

\begin{proof}
Let $K\subset B_1$ be a compact and $i=1,\dots,m$. We claim that there exists an universal constant $C>0$ such that, for any $X_0 \in K$ and $r \in (0,1-\abs{X_0})$
\be \label{morrey}
  \frac{1}{r^{n}} \int_{B_r(X_0)}  |\nabla g^i|^2 \mathrm{d}X \leq C.
\ee
Then, by a Morrey type embedding, we deduce $g^i \in C^{0,1/2}(K)$ (see \cite{Morrey1966}). Moreover, since the constant $C>0$ is universal, the inequality \eqref{holder} is satisfied for a constant $C>0$ independent on the local minimizer.

Since by Lemma \ref{lem1}, the components of $G$ are harmonic in $B_1 \setminus \{x_{n+1}=0\}$, it is not restrictive to suppose that $X_0\in \{x_{n+1}=0\}.$ By the translation invariance of the problem, let us suppose $X_0=0$ and $r\in (0,1)$. Inspired by the proof of \cite{CRS}[Theorem 1.1.], let $\tilde{g}^i_r \colon B_r \to \R$ be the harmonic replacement of $g^i$ in $B_r$, i.e. be such that
$$
\begin{cases}
  \Delta \tilde{g}^i_r=0 & \mbox{in } B_r \\
  g^i_r=g^i & \mbox{on }\partial B_r.
\end{cases}
$$
By an integration by parts, we easily deduce
\be\label{u-uh}
\int_{B_r}{\langle \nabla \tilde{g}^i_r, \nabla (g^i- \tilde{g}^i_r)\rangle \mathrm{d}X} = 0.
\ee

Consider now the competitor $\widetilde{G_{i}} = (g^1, \dots, \tilde{g}^i_r, \dots, g^m)$. By the minimality of $G$ we get $\mathcal{J}(G,B_r) \leq \mathcal{J}(\widetilde{G_i},B_r)$, which implies
$$
\int_{B_r}{\abs{\nabla g^i}^2\mathrm{d}X} \leq
\int_{B_r}{\abs{\nabla \tilde{g}^i_r}^2\mathrm{d}X} + \omega_n r^n.
$$
Combining the ``quasi-minimality" of $g^i$ with \eqref{u-uh}, we get
$$
\int_{B_r}{\abs{\nabla (g^i-\tilde{g}^i_r)}^2\mathrm{d}X} \leq
\omega_n r^n.
$$
Thus, for $\rho \in (r,1)$ we get
\begin{align*}
\int_{B_r}{\abs{\nabla g^i}^2\mathrm{d}X} & \leq 2 \left(\int_{B_\rho}{\abs{\nabla (g^i-\tilde{g}^i_\rho)}^2\mathrm{d}X} + \int_{B_r}{\abs{\nabla \tilde{g}^i_\rho}^2\mathrm{d}X}\right)\\
&\leq C \rho^n + C \int_{B_r}{\abs{\nabla \tilde{g}^i_\rho}^2\mathrm{d}X}\\
& \leq C \rho^n + C \left(\frac{r}{\rho}\right)^{n+1} \int_{B_\rho}{\abs{\nabla \tilde{g}^i_\rho}^2\mathrm{d}X}\\
& \leq C \rho^n + C \left(\frac{r}{\rho}\right)^{n+1} \int_{B_\rho}{\abs{\nabla {g}^i}^2\mathrm{d}X},
\end{align*}
where in the third inequality we used that $|\nabla \tilde g^i_\rho|^2$ is subharmonic. Hence, fixed $\delta <1/2$ such that $q=C \delta<1$, if
$\rho=\delta^{k-1}, r=\delta^{k}$ and $\mu=\delta^n$ we get
%\begin{align*}
$$
\int_{B_{\delta^{k}}} \abs{\nabla g^i}^2 \mathrm{d}X \leq C \mu^{k-1} + C \mu\delta \int_{B_{\delta^{k-1}}} \abs{\nabla g^i}^2 \mathrm{d}X
%\\&\leq C \mu^k + Cq \mu^k + q^2 \mu^2 \int_{B_{\delta^{k-1}}} \abs{y}^a \abs{\nabla u^i}^2 \mathrm{d}X
%\\&\leq C \mu^k + Cq \mu^k + C q^2 \mu^k +  q^3\mu^3 \int_{B_{\delta^{k-2}}} \abs{y}^a \abs{\nabla u^i}^2 \mathrm{d}X,
$$
%\end{align*}
and iterating the previous estimate
$$
\int_{B_{\delta^{k}}} \abs{\nabla g^i}^2 \mathrm{d}X \leq C \mu^{k-1}\sum_{i=0}^{k-1} q^i %+ C q^{k}\mu^{k}
\leq C \mu^{k-1}\frac{1}{1-q}.
$$
Hence, there exists a universal constant $\tilde{C}>0$ such that
$$
\int_{B_{r}}{\abs{\nabla g^i}^2\mathrm{d}X} \leq \tilde{C} r^n,
$$
for every $r \in (0,1/2)$. By a covering argument we obtain the claimed inequality \eqref{morrey}.
%In order to achieve the optimal regularity, we use the characterization of H\"{o}lder function. Namely, by Morrey's embedding we deduce that $u\in C^{0,s}(B_r)$.
\end{proof}
By the continuity, we immediately deduce the following corollary.
\begin{cor}\label{open}
  Let $G$ be a local minimizer, then the sets
  $$
  \{\abs{G}>0\}\quad\text{and}\quad\{ g^i >0\},\{ g^i <0\}\,\,\text{ for $i=1,\dots,m$},
  $$
  and their restrictions on $\{x_{n+1}=0\}$ are respectively open in $\R^{n+1}$ and $\R^n$.
\end{cor}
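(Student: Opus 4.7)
The plan is to derive this directly from the Hölder continuity established in Proposition \ref{uniform.reg}, so the proof reduces to a one‑line topological observation for each of the sets in question.

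First I would note that Proposition \ref{uniform.reg} gives $G \in C^{0,1/2}(K;\R^m)$ for every compact $K \subset B_1$, so in particular every component $g^i \colon B_1 \to \R$ is continuous on $B_1$, and the scalar function $|G| = \bigl(\sum_i (g^i)^2\bigr)^{1/2}$ is continuous on $B_1$ as a composition of continuous maps. Since $\{|G|>0\} = |G|^{-1}\bigl((0,\infty)\bigr)$ and $\{g^i > 0\} = (g^i)^{-1}\bigl((0,\infty)\bigr)$, $\{g^i < 0\} = (g^i)^{-1}\bigl((-\infty,0)\bigr)$, each of these is the preimage of an open subset of $\R$ under a continuous real-valued function on $B_1$, hence open in $\R^{n+1}$.

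For the second statement, I would observe that because $G$ is continuous on the full $(n+1)$-dimensional ball $B_1$ (not just away from the hyperplane), its trace on $\{x_{n+1}=0\}$ is obtained by precomposing with the continuous inclusion $\mathcal{B}_1 \hookrightarrow B_1$, and so $|G|(\cdot,0)$ and $g^i(\cdot,0)$ are continuous on $\mathcal{B}_1$. The same preimage argument then shows that the restrictions of $\{|G|>0\}$, $\{g^i>0\}$, $\{g^i<0\}$ to $\{x_{n+1}=0\}$ are open in $\R^n$.

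There is no real obstacle here: the corollary is a formal consequence of continuity, and the only mild subtlety is that one needs continuity \emph{up to and on} the hyperplane $\{x_{n+1}=0\}$ (not just harmonicity off of it) in order to pass to the trace — but this is precisely what Proposition \ref{uniform.reg} delivers.
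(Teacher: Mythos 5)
Your proof is correct and is exactly the argument the paper intends: the corollary is stated immediately after Proposition \ref{uniform.reg} with the remark ``By the continuity, we immediately deduce the following corollary,'' i.e.\ openness follows from taking preimages of open sets under the continuous functions $|G|$ and $g^i$ (and their traces). Your write-up simply makes this explicit.
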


With the continuity at hands, we can easily obtain the harmonicity of the components away from $\{|G|=0\}.$
\begin{lem}\label{L-a}
  Let $G$ be a local minimizer in $B_1$. Then for every $i=1,\dots,m$ we get $$\Delta g^i=0\quad \text{ in
  $B_1^+(G)$},
  $$
  and consequently
  $$
  g^i_\pm \mbox{ is }\mbox{subharmonic in }B_1.
  $$
  Moreover $\lambda_i=\Delta g^i$ is a signed Radon measure supported on $\partial \{ \abs{G}>0\}$ with the total variation $|\Delta g^i|$ satisfying
  $$
  \langle \abs{\Delta g^i},\chi_K\rangle \leq C(n,K)\int_{B}{ |\nabla g^i|^2\mathrm{d}X},
  $$
  for every compact set $K\subset B_1$.
%$$
%\int_{B_1}{\abs{y}^a \langle \nabla u^i,\nabla \varphi\rangle \mathrm{d}X}=-\int_{B_1}\varphi \mathrm{d}\lambda_i.
%$$
\end{lem}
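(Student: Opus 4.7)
The proof has three variational components, each based on a carefully chosen competitor that exploits the structure of $\mathcal{J}$.

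\emph{Step 1 (harmonicity in $B_1^+(G)$).} Fix $\varphi\in C_c^\infty(B_1^+(G))$ and small $\varepsilon\in\R$, and compare $G$ with the competitor $\widetilde G=G+\varepsilon\varphi f^i$. By Corollary \ref{open}, $\mathrm{supp}(\varphi)$ is a compact subset of the open set $B_1^+(G)$, so on $\mathrm{supp}(\varphi)\cap\{x_{n+1}=0\}\subset\mathcal{B}_1^+(G)$ the modulus $|G|$ is uniformly bounded below by a positive constant. Hence for $|\varepsilon|$ sufficiently small $|\widetilde G|$ remains strictly positive there, so the surface term of $\mathcal{J}$ is unaffected by the perturbation. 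Minimality then yields the first variation $\int_{B_1}\langle\nabla g^i,\nabla\varphi\rangle\,\mathrm{d}X=0$ for every such $\varphi$, i.e.\ $\Delta g^i=0$ in $B_1^+(G)$.

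\emph{Step 2 (subharmonicity of $g^i_\pm$).} I treat $g^i_+$; the other sign is symmetric. Fix $\varphi\in C_c^\infty(B_1)$ with $\varphi\geq 0$ and $\varepsilon>0$ small, and consider
\[
\widetilde g^i:=(g^i-\varepsilon\varphi)_+-g^i_-,\qquad \widetilde G:=(g^1,\dots,\widetilde g^i,\dots,g^m).
\]
A pointwise analysis on $\{g^i\leq 0\}$, $\{0<g^i\leq\varepsilon\varphi\}$, $\{g^i>\varepsilon\varphi\}$ shows $|\widetilde g^i|\leq|g^i|$, so $\{|\widetilde G|>0\}\subseteq\{|G|>0\}$ and the surface term of $\mathcal{J}$ cannot increase. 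Minimality then reduces to $\int|\nabla g^i|^2\leq\int|\nabla\widetilde g^i|^2$; computing $\nabla\widetilde g^i$ by Stampacchia and splitting the integral on the three regions, this rearranges to
\[
2\varepsilon\int_{\{g^i>\varepsilon\varphi\}}\langle\nabla g^i,\nabla\varphi\rangle\,\mathrm{d}X+\int_{\{0<g^i\leq\varepsilon\varphi\}}|\nabla g^i|^2\,\mathrm{d}X\leq\varepsilon^2\int|\nabla\varphi|^2\,\mathrm{d}X.
\]
Dropping the nonnegative second term, dividing by $2\varepsilon$, and passing to the limit $\varepsilon\to 0^+$ (dominated convergence together with $\nabla g^i_+=\chi_{\{g^i>0\}}\nabla g^i$ a.e.) gives $\int\langle\nabla g^i_+,\nabla\varphi\rangle\,\mathrm{d}X\leq 0$, i.e.\ $\Delta g^i_+\geq 0$ as a distribution. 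The subharmonicity of $g^i_-$ follows from the mirror competitor $g^i_+-(-g^i-\varepsilon\varphi)_+$.

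\emph{Step 3 (Radon measure, support, bound).} Since $g^i_\pm\in H^1_{\loc}(B_1)$ are subharmonic, their distributional Laplacians are nonnegative Radon measures, so $\Delta g^i=\Delta g^i_+-\Delta g^i_-$ is a signed Radon measure. Step 1 confines $\mathrm{supp}(\Delta g^i)$ to $B_1\setminus B_1^+(G)\subset\{x_{n+1}=0\}\cap\{|G|=0\}$, and on the $\R^{n+1}$-interior of $\{|G|=0\}$ one has $g^i\equiv 0$ so $\Delta g^i=0$ there, giving $\mathrm{supp}(\Delta g^i)\subset\partial\{|G|>0\}$. The total variation estimate is standard Caccioppoli: given $K\subset\subset B\subset\subset B_1$, pick $\eta\in C_c^\infty(B)$ with $\chi_K\leq\eta\leq 1$ and $|\nabla\eta|\leq C(n,K)$; testing the distributional subharmonicity of $g^i_\pm$ against $\eta$ yields $\int\eta\,\mathrm{d}(\Delta g^i_\pm)=-\int\langle\nabla g^i_\pm,\nabla\eta\rangle\,\mathrm{d}X$, and Cauchy--Schwarz/Young produces the claimed estimate.

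\emph{Main obstacle.} The crux is the choice of competitor in Step 2: the naive perturbation $g^i-\varepsilon\varphi$ does not preserve the positivity set of $|G|$, so the surface term of $\mathcal{J}$ is not controlled. The truncation $(g^i-\varepsilon\varphi)_+-g^i_-$ is designed precisely so that $|\widetilde G|\leq|G|$, allowing one to discard the measure term with the correct sign. The ``middle'' region $\{0<g^i\leq\varepsilon\varphi\}$, where $\widetilde g^i$ is driven to zero, must then be absorbed as a nonnegative contribution in the energy inequality, and extracting the subharmonicity of $g^i_+$ through the $\varepsilon\to 0^+$ limit is the technical heart of the argument.
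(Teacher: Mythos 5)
Your proof is correct and follows the same three-part skeleton as the paper: first-variation harmonicity in $B_1^+(G)$, subharmonicity of $g^i_\pm$, and the Radon-measure/total-variation estimate by testing against a cutoff. Steps~1 and~3 coincide with the paper's argument essentially verbatim.

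Where you genuinely depart from the paper is Step~2. The paper treats the subharmonicity of $g^i_\pm$ as a ``consequence'' of the harmonicity of $g^i$ in $\{|G|>0\}$, implicitly invoking the standard fact (used already in \cite{AC}) that a nonnegative continuous function which is harmonic on its own positivity set is subharmonic; the proof of that fact is a comparison/maximum-principle argument, not a variational one. You instead produce a self-contained variational derivation with the truncated competitor $\widetilde g^i=(g^i-\varepsilon\varphi)_+-g^i_-$, which is engineered so that $|\widetilde G|\leq|G|$, and thus the surface term can be discarded with the correct sign; dropping the nonnegative contribution from the region $\{0<g^i\leq\varepsilon\varphi\}$ and sending $\varepsilon\to 0^+$ gives $\Delta g^i_+\geq 0$ directly. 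Your route is longer but works purely at the level of $H^1$ and extracts the subharmonicity directly from minimality, whereas the paper's (implicit) route is shorter but relies on continuity and the removability lemma. Both are sound; the paper simply leaves this step to the reader. One small imprecision shared with the paper: the cutoff test together with Cauchy--Schwarz yields $\langle|\Delta g^i|,\chi_K\rangle\leq C(n,K)\,\|\nabla g^i\|_{L^2(B)}$, i.e.\ the $L^2$ norm rather than its square as written in the statement; this is harmless for the intended use but worth flagging.
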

\begin{proof}
  As in \cite{AC}, the first part of the result follows by computing the first variation of the functional $\mathcal{J}(\cdot,B_1)$ with respect to a direction $\xi e^i$, with $\xi \in C^{\infty}(\{\abs{G}>0\})$.\\ More precisely, fixed $i=1,\dots,m$, consider the competitor $G_\eps= G + \eps \xi e^i$, for some $\xi\in C^\infty_c(\{\abs{G}>0\})$ and $i=1,\dots,m$. By the previous corollary, $\{\abs{G}>0\}$ is an open set, and passing through the first variation, we get
  $$
  0=\frac12\left.\frac{d}{d\eps} \mathcal{J}(G_\eps,K)\right\lvert_{\eps=0} = \int_{\{\abs{G}>0\}\cap K}{\langle \nabla g^i,\nabla \xi\rangle \mathrm{d}X},
  $$
for every compact $K\subset B_1$.\\
Now, since $g^i_{\pm}$ are both nonnegative subharmonic in $B_1$ and $g^i$ is harmonic in $\{\abs{G}>0\}$, then $\lambda_i = \Delta g^i$ is a signed Radon measure supported in $\partial \{ \abs{G}>0\}$. Moreover, by a standard argument, let $\eta \in C^\infty_c(B_1)$
be such that $0\leq \eta \leq 1$ and $\eta \equiv 1$ on $K$. Then
$$
 \langle \abs{\Delta g^i},\chi_K\rangle \leq \langle \abs{\Delta g^i},\eta \rangle = \int_{B}{ \langle \nabla g^i, \nabla \eta \rangle \mathrm{d}X} \leq C(n,K)\int_{B}{ |\nabla g^i|^2\mathrm{d}X},
$$
as we claimed.
\end{proof}

\begin{rem}\label{Gsub}
By explicit computation we now easily deduce
$$
2 \abs{G} \Delta\left(\abs{G}\right) = \Delta(\abs{G}^2) - \frac{\abs{\nabla \abs{G}^2}^2}{2\abs{G}^2} \geq 0\quad\text{in $B^+_1(G)$},
$$
and consequently that $|G|$ is subharmonic in $B_1^+(G)$.
\end{rem}

As in the scalar case in \cite{CRS}, we can now detail the connection of global minimizer with the fractional analogue of the Bernoulli one-phase problem.
\begin{cor}\label{sharmonic}
  Let $G$ be a global minimizer in $\R^{n+1}$ and $0\in F(G)$. Then, for every $i=1,\dots,m$, the trace of $g^i$ on $\{x_{n+1}=0\}$ solves
    $$
  \begin{cases}
    (-\Delta)^{1/2} g^i(\cdot,0) = 0 & \text{in $\{\abs{G}>0\}$} \\
    g^i(\cdot,0)=0 & \text{in $\{\abs{G}=0\}$}.
  \end{cases}
  $$
\end{cor}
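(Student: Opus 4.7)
The plan is a direct application of the Caffarelli--Silvestre extension characterization of the half-Laplacian. Let $h := g^i(\cdot,0)$ denote the trace on $\R^n\times\{0\}$. The second boundary condition $h=0$ on $\{|G|=0\}$ is trivial: if $|G(x,0)|=0$ then every component $g^j(x,0)$ vanishes, in particular $g^i(x,0)=0$.

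For the first equation, recall that if $H$ is the harmonic extension of $h$ to $\R^{n+1}_+=\{x_{n+1}>0\}$ (continuous up to the boundary, with sub-linear growth), then, up to a normalizing dimensional constant $c_n>0$,
\[
(-\Delta)^{1/2} h(x) = -c_n\, \partial_{x_{n+1}} H(x,0^+).
\]
The goal is therefore to identify $g^i$ with the even extension of $H$ to $\R^{n+1}$ and to show that its normal derivative at $\{x_{n+1}=0\}$ vanishes on $\{|G|>0\}$.

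For the identification, by Lemma \ref{lem1}, $g^i$ is harmonic in $\R^{n+1}_+$, while the optimal regularity of Proposition \ref{uniform.reg} (applied at every scale, together with $0\in F(G)$) gives $|g^i(X)|\leq C(1+|X|)^{1/2}$. The difference $g^i-H$ is then harmonic in $\R^{n+1}_+$, continuous up to the boundary with zero trace, and of sub-linear growth. Extending it oddly through $\{x_{n+1}=0\}$ yields a harmonic function on all of $\R^{n+1}$ of sub-linear growth, which by Liouville must vanish identically; hence $g^i|_{\R^{n+1}_+}\equiv H$. By the standing even-symmetry assumption on $G$, the same identification holds in $\R^{n+1}_-$.

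For the normal derivative, Lemma \ref{L-a} gives $\Delta g^i=0$ throughout $\R^{n+1}\setminus(\{x_{n+1}=0\}\cap\{|G|=0\})$. Thus at every point $(x,0)$ with $|G(x,0)|>0$, the function $g^i$ is harmonic, and in particular smooth, in a full $\R^{n+1}$-neighborhood of $(x,0)$. Evenness in $x_{n+1}$ then forces $\partial_{x_{n+1}} g^i(x,0)=0$, and inserting this into the extension identity yields $(-\Delta)^{1/2}h(x)=0$ on $\{|G|>0\}$, which is the claim. The only mildly delicate step is the Liouville-based identification $g^i\equiv H$; the remainder is a direct reading of Lemmas \ref{lem1} and \ref{L-a} together with the even-symmetry convention.
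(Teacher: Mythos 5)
Your proof is correct and takes the expected route: the corollary is stated in the paper without an explicit proof (it is presented as the vectorial analogue of the scalar statement in \cite{CRS}), and your argument supplies precisely the missing details via the Caffarelli--Silvestre extension characterization of $(-\Delta)^{1/2}$, the harmonicity of $g^i$ off the zero set from Lemmas \ref{lem1} and \ref{L-a}, the scale-invariant $C^{0,1/2}$ growth from Proposition \ref{uniform.reg}, and the even-symmetry convention. The Liouville step identifying $g^i|_{\R^{n+1}_+}$ with the Poisson extension of the trace is the right way to make the identification rigorous, and the rest (vanishing normal derivative across $\{x_{n+1}=0\}\cap\{|G|>0\}$ by evenness and interior harmonicity, and the trivial second condition) is a direct reading of the lemmas.
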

%\begin{proof}
%The strategy of the scalar case applies straightforwardly in this context. Indeed, by Proposition \ref{uniform.reg} and Lemma \ref{L-a} we already know that a global minimizer satisfies a suitable growth condition at infinity which allows to exclude the existence of non trivial solutions of $$
%\begin{cases}
%  \Delta v=0 & \mbox{in } \R^{n+1}_+ \\
%  v=0 & \mbox{in } \{x_{n+1}=0\} \\
%  v(X)\leq C\abs{X}^{1/2} & \mbox{in } \R^{n+1}_+ ,
%\end{cases}
%$$
%in the same spirit of \cite[Proposition 3.1]{CRS}.
%\end{proof}}
%\subsection*{Positive density}
%We prove here that if $U$ is a local minimizer, then the sets of positivity $\{\abs{U}>0\}$ and the zero-sets $\{u^i=0\}$ on $\{y=0\}$ satisfy lower Lebesgue density estimates at the free boundary $F(U)$.
\subsection{Non-degeneracy}
The non-degeneracy of solutions near the free boundary points allows to obtain several results on the measure-theoretic structure of the free boundary via the blow-up analysis. We start by proving the following weak non-degeneracy condition.
\begin{prop}\label{non-deg1}
  Let $G$ be a local minimizer. Then, there exists a universal constant $c_2>0$ such that
  \be
  \abs{G}(X)\geq c_2 \mathrm{dist}(X, \partial \{\abs{G}>0\})^{1/2}\quad
  \text{in $\mathcal{B}_{1/2}^+(G)$}.
  \ee
\end{prop}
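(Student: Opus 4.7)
I will adapt the scalar non-degeneracy argument of \cite{CRS} to the vectorial setting, using the subharmonicity of $|G|$ in $B_1^+(G)$ (Remark~\ref{Gsub}), the harmonicity of each component (Lemmas~\ref{lem1} and \ref{L-a}), and the $C^{0,1/2}$ regularity (Proposition~\ref{uniform.reg}). The first step is a scaling reduction: for $X_0 \in \mathcal{B}_{1/2}^+(G)$ with $d := \mathrm{dist}(X_0, \partial\{|G|>0\})$, the rescaled minimizer $G_d(X) := d^{-1/2}G(X_0+dX)$ satisfies, by \eqref{rescale.J}, $\mathcal{B}_1(0) \subset \{|G_d|>0\}$ with a free boundary point on $\partial\mathcal{B}_1(0)$, reducing the claim to a universal lower bound $|G_d|(0) \geq c_2$.

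Arguing by contradiction, I assume $|G_d|(0) < \varepsilon$ with $\varepsilon$ small and construct a vector-valued competitor $\widetilde G_d$ by solving, component by component, the mixed Dirichlet problem on $B_\rho(0) \setminus \mathcal{B}_{\rho/2}(0)$ with boundary data $g_d^i$ on $\partial B_\rho$ and $0$ on $\mathcal{B}_{\rho/2}$, and extending $\widetilde G_d := G_d$ outside $B_\rho$. This yields $\widetilde G_d \in H^1(B_1;\R^m)$ with $|\widetilde G_d| \equiv 0$ on $\mathcal{B}_{\rho/2}(0) \subset \{|G_d|>0\}$, so that the measure term in $\mathcal{J}$ decreases by at least $c_n (\rho/2)^n$. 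The minimality of $G_d$ then produces
$$
c_n (\rho/2)^n \;\leq\; \int_{B_\rho} \bigl(|\nabla \widetilde G_d|^2 - |\nabla G_d|^2\bigr),
$$
and the right-hand side is bounded, via integration by parts and the thin capacity of $\mathcal{B}_{\rho/2}$ in $B_\rho$ (scaling like $\rho^{n-1}$, reflecting the half-Laplacian structure of the problem), by $C\rho^{n-1}\|G_d\|_{L^\infty(\partial B_\rho)}^2$. Combining with the H\"older bound $\|G_d\|_{L^\infty(\partial B_\rho)} \lesssim \rho^{1/2}$ near $F(G_d)$ forces, for a universal choice of $\rho$, the inequality $c_n \lesssim \varepsilon^2$, contradicting the smallness of $\varepsilon$.

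The main obstacle is that subharmonicity of $|G_d|$ only controls it from above, so the pointwise smallness $|G_d|(0) < \varepsilon$ does not by itself yield smallness of $\|G_d\|_{L^\infty(\partial B_\rho)}$. To circumvent this, I first establish an auxiliary sup-version of non-degeneracy at free boundary points, $\sup_{B_r(Y_0)}|G| \geq c\, r^{1/2}$ for $Y_0 \in F(G)$ and small $r$, via the same competitor construction applied at the rescaled free boundary point on $\partial \mathcal{B}_1$. I then transfer this lower bound back to the origin through a Harnack chain on a sign-pure component $g_d^i$: by Corollary~\ref{open} the positive and negative sets of each $g_d^i$ are open, and on each such connected component $g_d^i$ is a sign-definite harmonic function, so the classical Harnack chain connects a point of order $1$ near $F(G_d)$ to the origin through the positivity set of $G_d$, yielding $|g_d^i(0)| \geq c_2$ and hence $|G_d|(0) \geq c_2$.
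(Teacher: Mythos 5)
Your reduction to showing $|G_d|(0)\geq c_2$ and your idea of a cutoff competitor that vanishes on a thin ball around the origin match the spirit of the paper's proof, and you correctly diagnose the obstacle: pointwise smallness of $|G_d|$ at the origin does not by itself control $\|G_d\|_{L^\infty(\partial B_\rho)}$. However, your proposed fix via a Harnack chain has a genuine gap. The Harnack inequality requires a sign-definite harmonic function on a chain of balls with controlled geometry, but the individual components $g_d^i$ are merely harmonic in $B_1$ (since $\mathcal B_1\subset\{|G_d|>0\}$); they may change sign arbitrarily close to the origin, and $|G_d|$ is only subharmonic, which gives no lower bound. Even if $|G_d|(Z)\geq c$ at a point $Z$ near $\partial\mathcal B_1$, we do not know which component achieves this, nor that that component is sign-definite along any path from $Z$ to the origin; since you are \emph{assuming} $|G_d|(0)<\varepsilon$, the origin could lie on (or arbitrarily near) the nodal set of every component, making the chain untraversable or its constants non-universal. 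Thus "$|g_d^i(0)|\geq c_2$" does not follow.

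The paper's proof avoids Harnack entirely with a simpler device. Since every $g^i$ is harmonic in $B_1$ and uniformly bounded in $B_{3/4}$ (by the universal $C^{0,1/2}$ bound of Proposition~\ref{uniform.reg}), the interior gradient estimate gives $|g^i(X)-g^i(0)|\leq K|X|$ in $B_{1/2}$, hence $|g^i(X)|\leq c_2 + K|X|$. Now perform a \emph{secondary} rescaling $G_\delta(X)=\delta^{-1/2}G(\delta X)$ with $\delta=c_2$: the linear growth $K|X|$ becomes $K\delta^{1/2}$ after the $\delta^{-1/2}$-rescaling, so $\|g^i_\delta\|_{L^\infty(B_1)}, \|\nabla g^i_\delta\|_{L^\infty(B_{1/2})}\leq C\delta^{1/2}$. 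Only \emph{then} is the cutoff competitor $G_\delta(1-\varphi)$ (with $\varphi\equiv 1$ on $B_{1/4}$) deployed: the energy cost is $\leq C\delta$ while the measure gain is a fixed constant $C_0>0$, contradicting minimality for $\delta$ small. The point is that the smallness at scale $\delta$ comes from the incompatibility of linear growth (from harmonicity plus pointwise smallness) with the $1/2$-homogeneous scaling of the problem, not from propagating a lower bound from afar. Your argument would be repaired by replacing the Harnack step with this gradient-estimate-and-rescale step, which makes the aux sup-version non-degeneracy at a free boundary point unnecessary here.
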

\begin{proof}
Up to translation and rescaling, it is enough to show that if $G$ is a local minimizer in a large ball and
  \be\label{assumption}
  \mathrm{dist}(0, \partial \{\abs{G}>0\})=1,
  \ee
 then $$|G|(0) \geq c_2 >0$$ for some $c_2$ small to be made precise later.
 Indeed, assume not, then $\mathcal B_1 \subset \{|G|>0\}$ and
$$g^i \quad \text{is harmonic in $B_1,$} \quad g^i(0) \leq c_2, \quad \text{for every }i=1,\ldots, m.$$  By the $C^{0,1/2}$- regularity of minimizers we deduce that the $g^i$'s are uniformly bounded say in $B_{3/4}$ and hence, since they are harmonic
$$|g^i(X) - g^i(0)| \leq K |X|, \quad \text{in $B_{1/2}$},$$ for $K>0$ universal. Thus,
$$g^i(X) \leq c_2 + K|X|, \quad \text{in $B_{1/2}$.}$$
Let
$$G_{\delta}(X) = \frac{1}{\delta^{1/2}}G(\delta X), \quad X \in B_1$$
with $\delta>0$ universal to be chosen universal later. Then, for $c_2 \leq \delta$ we get
$$g^i_\delta \leq c_2 \delta^{-1/2}+ K \delta^{1/2} \leq C\delta^{1/2} \quad\text{in $B_1$},$$
for every $i=1,\dots,m$. Moreover, since the $g^i_\delta$'s  are harmonic in $B_1$, the bound above implies
$$\|g^i_\delta\|_{L^\infty(B_{1})}, \|\nabla g^i_\delta\|_{L^\infty(B_{1/2})} \leq C \delta^{1/2}.$$
Let $\varphi \in C_0^{\infty}(B_{1/2}), 0 \leq \varphi \leq 1$ such that $\varphi \equiv 1$ in $B_{1/4}$, then
$$\int_{B_1}|\nabla g^i_\delta|^2 \mathrm{d}X \geq \int_{B_1} |\nabla (g^i_\delta(1-\varphi))|^2 \mathrm{d}X - C \delta$$
and on the other hand
$$\mathcal{L}_n(\mathcal B_1^+(|G_\delta|)) \geq \mathcal{L}_n(\mathcal B_1^+(|G_\delta|(1-\varphi))) + C_0.$$ In conclusion, by the minimality of $G_\delta$
$$0 \geq -C\delta + C_0,$$ and we reach a contradiction for $\delta$ (hence $c_2$) sufficiently small.

\end{proof}
%Following the previous argument, we can slightly improve the estimate by extending the non-degeneracy to each components of $U$.
%\begin{cor}
%Let $U$ be a local minimiser. Then, there exists a universal constat $C_2>0$ such that
%  \be
%  \lvert u^i\rvert(X)\geq C_2 \mathrm{dist}(X, \partial \{\abs{U}>0\})^s\quad
%  \text{in $B_{1/2} \cap \{u^i \neq 0\} \cap \R^n$},
%  \ee
%  for every $i=1,\dots,m$.
%\end{cor}
%\subsection*{Density estimate for the positivity set}
%Here we provide some density estimates for positivity set $\{\abs{U}>0\}$ and for the zero sets $\{u^i=0\}$, for $i=1,\dots,m$. The firt result is an interior corkscrew condition of the positivity set $\{\abs{U}>0\}\cap \R^n$.
The following result improves the non-degeneracy property of Proposition \ref{non-deg1}, and it will be fundamental in the proof of existence of non trivial blow-up limits.
\begin{prop}\label{non-deg2}
  Let $G$ be a local minimizer and $0 \in F(G)$. Then, for every $r \in (0,1/2)$
  \be\label{non-deg}
  \sup_{\mathcal B_r} \abs{G} \geq c r^{1/2},
  \ee
  for some universal constant $c >0$.
\end{prop}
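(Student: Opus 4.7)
The plan is by contradiction and rescaling, following the strategy of the scalar thin case in \cite{CRS}. I would first invoke the scale invariance \eqref{blow.up}--\eqref{rescale.J} applied to $G_{0,r}(X)=r^{-1/2}G(rX)$ to reduce to $r=1$: namely, $G$ is a local minimizer on a large ball with $0\in F(G)$ and $\sup_{\mathcal B_1}|G|\leq \eps$ for $\eps>0$ a small universal constant to be fixed at the end. My first step would be to propagate this smallness from $\mathcal B_1$ into a thin slab around it. Since each $g^i$ is harmonic in $B_1\setminus\{x_{n+1}=0\}$ by Lemma \ref{lem1} and uniformly bounded on the spherical part of $\partial B_1$ by Proposition \ref{uniform.reg}, comparison with the harmonic barrier in $B_1^+$ equal to $\eps$ on $\mathcal B_1$ and the universal bound on the upper hemisphere — which by odd reflection extends to a smooth harmonic function on $B_1$ vanishing to first order on $\mathcal B_{3/4}$ — yields
\[
|G(x,x_{n+1})|\leq C\bigl(\eps+|x_{n+1}|\bigr)\qquad\text{on }\mathcal B_{3/4}\times[-1/2,1/2].
\]
In particular $|G|\leq C\eps$ on the slab $\mathcal S_\eps:=\mathcal B_{3/4}\times[-\eps,\eps]$.

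Next I would run the competitor comparison. Let $\eta\in C_c^\infty(B_1)$ be a cut-off with $\eta\equiv 1$ on $\mathcal B_{1/2}\times[-\eps/2,\eps/2]$, $\mathrm{supp}\,\eta\subset \mathcal S_\eps$, $|\nabla\eta|\leq C/\eps$, and $|\Delta\eta|\leq C/\eps^2$, and set $\tilde G=(1-\eta)G$. Then $\tilde G=G$ on $\partial B_1$ and $|\tilde G|\equiv 0$ on $\mathcal B_{1/2}$, so $\mathcal L_n(\mathcal B_1^+(\tilde G))\leq \mathcal L_n(\mathcal B_1^+(G))-\mathcal L_n(\mathcal B_{1/2}^+(G))$. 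Expanding componentwise $|\nabla((1-\eta)g^i)|^2$ and integrating by parts the cross term via $2g^i\nabla g^i=\nabla((g^i)^2)$ (boundary terms vanish since $\eta\equiv 0$ near $\partial B_1$) gives the identity
\[
\int_{B_1}|\nabla\tilde G|^2-\int_{B_1}|\nabla G|^2=-\int_{B_1}\eta(2-\eta)|\nabla G|^2+\int_{B_1}|G|^2(1-\eta)\Delta\eta.
\]
Dropping the non-positive first term and using the slab bound $|G|\leq C\eps$ on $\mathrm{supp}\,\nabla\eta$, $|\Delta\eta|\leq C/\eps^2$, and $|\mathcal S_\eps|\leq C\eps$, the minimality $\mathcal J(G,B_1)\leq \mathcal J(\tilde G,B_1)$ yields the crucial estimate
\[
\mathcal L_n\bigl(\mathcal B_{1/2}^+(G)\bigr)\leq C\int_{\mathcal S_\eps}|G|^2|\Delta\eta|\,\mathrm{d}X\leq C'\eps.
\]

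Finally, to close the contradiction I would combine this measure bound with the weak non-degeneracy of Proposition \ref{non-deg1}. For every $Y\in \mathcal B_{1/2}^+(G)$ one has $c_2\,d(Y,\partial\{|G|>0\})^{1/2}\leq|G(Y)|\leq C\eps$, so $d(Y,\partial\{|G|>0\})\leq C\eps^2$: the set $\mathcal B_{1/2}^+(G)$ lies in a $C\eps^2$-tubular neighborhood of its thin boundary. The hard part will be converting this structural information into an outright contradiction with $0\in F(G)$: the standard route (parallel to the scalar case in \cite{CRS}) is a dyadic iteration of the entire scheme producing a geometric gain $\sup_{\mathcal B_{r/2}}|G|\leq \theta\,\sup_{\mathcal B_r}|G|$ with $\theta<2^{-1/2}$ whenever $\eps$ is sufficiently small, combined with the Hölder bound \eqref{holder}. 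Iterating forces $|G|$ to vanish on a whole neighborhood of $0$, contradicting $0\in F(G)$ for $\eps$ chosen sufficiently small universal.
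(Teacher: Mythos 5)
Your first three steps are sound and the computations check out: the rescaling, the barrier giving $|G|\leq C(\eps+|x_{n+1}|)$ on the slab, the identity for $\int|\nabla((1-\eta)G)|^2$, and the resulting bound $\mathcal L_n(\mathcal B_{1/2}^+(G))\leq C\eps$. But this is not yet a contradiction, and it is worth seeing why: in the proof of Proposition \ref{non-deg1} the analogous competitor argument closes because the hypothesis $\mathrm{dist}(0,\partial\{|G|>0\})=1$ guarantees that truncating gains a \emph{fixed} amount $C_0$ of measure, which beats the $O(\delta)$ energy cost. At a genuine free boundary point there is no a priori lower bound on $\mathcal L_n(\mathcal B_{1/2}^+(G))$ — that lower bound is exactly the density estimate of Corollary \ref{lower.H}, which in the paper is \emph{deduced from} Proposition \ref{non-deg2}, so it cannot be invoked here. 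Your Step 4 (the positivity set lies in a $C\eps^2$-neighborhood of its boundary) is correct but does not feed into a contradiction either.

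The genuine gap is the final step, which is where the entire content of the proposition lives. The geometric gain $\sup_{\mathcal B_{r/2}}|G|\leq\theta\sup_{\mathcal B_r}|G|$ with $\theta<2^{-1/2}$ is asserted, not proved, and nothing in your competitor estimate produces it: the error term $\int|G|^2|\Delta\eta|\sim\eps$ is not proportional to the quantity being controlled (as it is in the Alt--Caffarelli degeneracy lemma for the local problem), precisely because here the measure term is $n$-dimensional while the Dirichlet energy is $(n+1)$-dimensional. Moreover, even granting the gain, iterating it only yields $\sup_{\mathcal B_\rho}|G|=O(\rho^{1/2+\delta})$, i.e.\ faster-than-$\rho^{1/2}$ decay at $0$; this does \emph{not} force $|G|$ to vanish on a neighborhood of $0$, and it does not by itself contradict $0\in F(G)$ — ruling out such degenerate decay at free boundary points \emph{is} the statement of Proposition \ref{non-deg2}, so the argument is circular as sketched. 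The paper's proof takes a different and softer route: it applies Lemma \ref{lem4} (Caffarelli's chain-of-points argument) to $v=|G|$, using only the subharmonicity of $|G|$ in $B_1^+(G)$ (Remark \ref{Gsub}), the $C^{0,1/2}$ bound of Proposition \ref{uniform.reg}, and the pointwise non-degeneracy \eqref{c3change} supplied by Proposition \ref{non-deg1}; one constructs points $X_k$ with $v(X_{k+1})\geq(1+\delta)v(X_k)$ and $|X_{k+1}-X_k|\leq C(\eta)\,\mathrm{dist}(X_k,\{v=0\})\lesssim v(X_k)^2$, so the values grow geometrically while the chain stays in a ball of radius comparable to $v(X_{k+1})^2$. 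You should either adopt that argument or supply a full proof of the dichotomy ``flat at scale $r$ implies $|G|\equiv 0$ in $\mathcal B_{r/2}$,'' which your estimate does not give.
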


In view of Proposition \ref{uniform.reg} and Remark \ref{Gsub}, Proposition \ref{non-deg2} follows immediately from the next lemma.

\begin{lem}\label{lem4} Let $v \geq 0$ be defined in $B_1$ and subharmonic in $B^+_1(v).$ Assume that there is a small constant $\eta>0$ such that \be\label{c12*}\|v\|_{C^{1/2}(B_1)}\leq \eta^{-1},\ee and $v$ satisfies the non-degeneracy condition on $\mathcal B_1$,
\be\label{c3change}
v(X) \geq \eta \; \mathrm{dist}(X,\{v=0\})^{1/2} \quad\text{for every } X \in \mathcal{B}_1.
\ee
Then if $0 \in F(v)$, we get  $$\sup_{\mathcal B_r} v \geq c(\eta)\; r^{1/2}, \quad \text{for }r \leq 1.$$\end{lem}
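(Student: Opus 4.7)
My plan is to argue by contradiction, combining the pointwise non-degeneracy, the $C^{1/2}$ Hölder bound, and subharmonicity in the maximum-principle fashion typical of Alt–Caffarelli theory, adapted to the thin setting.

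\textbf{Setup.} Suppose for contradiction that $\sup_{\mathcal B_r} v < c\, r^{1/2}$ for some $r \in (0,1]$ and some small $c = c(\eta) > 0$ to be chosen. By the pointwise non-degeneracy, every $X \in \mathcal B_r$ with $v(X) > 0$ satisfies $D(X) := \mathrm{dist}(X, \{v=0\}) \leq \eta^{-2} v(X)^2 \leq \eta^{-2} c^2 r$, so the thin zero set $\mathcal F = \{v=0\}$ is $\eta^{-2}c^2 r$-dense in $\mathcal B_r$.

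\textbf{Two-sided scaling of $D$.} First I establish that, on positivity points, $D(X) \asymp v(X)^2$ up to $\eta$-dependent constants. The upper bound $D(X) \leq \eta^{-2}v(X)^2$ is immediate. For the lower bound, let $X_0 \in \mathcal B_{r/2}$ realize a positivity value $M_0 := v(X_0)$ (existing by $0\in F(v)$), set $d_0 := D(X_0)$, and pick the nearest zero point $Y_0 \in \partial B_{d_0}(X_0)$. The $C^{1/2}$-bound centered at $Y_0$ gives $v(Z) \leq \eta^{-1}|Z-Y_0|^{1/2} \leq \eta^{-1}\sqrt{2d_0}$ on $\partial B_{d_0}(X_0)$, while the subharmonic mean-value inequality on $B_{d_0}(X_0) \subset B_1^+(v)$ gives $M_0 \leq \fint_{\partial B_{d_0}(X_0)} v$. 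Combining, $M_0 \leq \eta^{-1}\sqrt{2d_0}$, i.e., $d_0 \geq \eta^2 M_0^2/2$.

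\textbf{Iteration by maximizing $D$.} Among all positivity points in $\overline{\mathcal B_{r/2}}$, choose $X_*$ maximizing $D$, with maximal value $d_*$ and $M_* := v(X_*)$. By subharmonicity on $B_{d_*}(X_*)$ there exists $X_1 \in \partial B_{d_*}(X_*)$ with $v(X_1) \geq M_*$; by the relation from the previous step applied to $X_1$ in place of $X_0$, one gets $D(X_1) \geq \eta^2 v(X_1)^2/2 \geq \eta^2 M_*^2/2 = d_*\cdot (\eta^2 M_*^2/(2d_*)) \geq d_* \cdot (\eta^4/2)$ using the two-sided bound. After projecting onto the slice (using the $C^{1/2}$-bound to control the drop of $v$, and the fact that the displacement $d_*$ is already $\leq \eta^{-2}M_*^2$), one finds a new slice point $X_1'\in \mathcal B_{r/2 + d_*}$ with $v(X_1') \geq (1+\gamma(\eta)) M_*$ and $D(X_1')\geq d_*$. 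Maximality of $d_*$ on $\mathcal B_{r/2}$ can then hold only if $X_1'$ has escaped $\mathcal B_{r/2}$, forcing $d_* \geq c(\eta)\, r$.

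\textbf{Conclusion.} From $d_* \geq c(\eta)\, r$ and $v(X_*)\geq \eta\, d_*^{1/2}$, we obtain $\sup_{\mathcal B_r} v \geq v(X_*) \geq \eta\sqrt{c(\eta)}\, r^{1/2}$, giving the desired non-degeneracy with $c(\eta) = \eta\sqrt{c(\eta)}$.

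\textbf{Main obstacle.} The delicate point is justifying the per-step improvement in Step 3: one must quantify that the maximum of $v$ on the sphere $\partial B_{d_*}(X_*)$ strictly exceeds $M_*$ by a factor depending only on $\eta$, using that $v$ is small on the cap of $\partial B_{d_*}(X_*)$ near $Y_*$ (by $C^{1/2}$) and the mean-value inequality forces compensation elsewhere. Extracting a useful growth factor from the averaging inequality for small $\eta$ is the heart of the argument and requires carefully choosing the cap and combining with the two-sided relation $D \asymp v^2$ established above.
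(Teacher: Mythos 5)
There are two genuine gaps, both at the heart of your iteration. First, the per-step improvement. Your improved point $X_1$ is produced by the sub-mean-value inequality on the full sphere $\partial B_{d_*}(X_*)\subset\R^{n+1}$, so it need not lie on the slice $\{x_{n+1}=0\}$ — yet both the non-degeneracy hypothesis \eqref{c3change} (which you need to bound the next step size $D\lesssim v^2$) and the conclusion $\sup_{\mathcal B_r}v\geq c r^{1/2}$ live only on the slice. Your proposed fix, ``projecting onto the slice using the $C^{1/2}$-bound,'' cannot work quantitatively: the drop of $v$ under projection is only controlled by $\eta^{-1}|x_{n+1}(X_1)|^{1/2}\leq \eta^{-1}d_*^{1/2}\leq \eta^{-2}M_*$, i.e.\ a loss of order $M_*$ itself, which swamps any gain $\gamma(\eta)M_*$ you could extract from the cap argument (there $\gamma(\eta)$ is a tiny power of $\eta$). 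This is precisely the difficulty the paper's proof is built around: one \emph{assumes} no improved point exists on the thin ball $\mathcal B_M(X_k)$, builds the harmonic majorant $w$ of $v$ in $B_M^+(X_k)$ vanishing on $\{x_{n+1}=0\}$ (so that $w\leq C\eta^{-1}x_{n+1}M^{-1/2}\leq\delta$ on $B_{r_k}(X_k)$ for $M$ large), transfers the slice bound to the solid ball $B_{r_k}(X_k)$, and only then applies the solid mean-value inequality together with the H\"older smallness of $v$ near the contact point $Y_k\in\partial B_{r_k}(X_k)$ to reach a contradiction. Without some such barrier, your sketch has no mechanism for locating the improved point on the slice, and you yourself flag this step as unproved.

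Second, the global structure ``maximize $D$ once and conclude'' does not deliver the lemma even if the improvement step were granted. If $X_1'\in\mathcal B_{r/2+d_*}\setminus\mathcal B_{r/2}$ with $D(X_1')\geq d_*$, maximality of $d_*$ over $\mathcal B_{r/2}$ is simply not violated, and nothing forces $d_*\geq c(\eta)r$: the maximizer $X_*$ may sit at distance $d_*$ from $\partial\mathcal B_{r/2}$ with $d_*$ arbitrarily small. The quantitative growth must come from iterating the improvement along a chain $X_0,X_1,\dots$ anchored near the free boundary point $0$, with the telescoping estimate $|X_{k+1}-X_0|\leq C(\eta)\sum_i\mathrm{dist}(X_i,\{v=0\})\leq C(\eta)\eta^{-2}\sum_i v(X_i)^2\leq c(\eta)v(X_{k+1})^2$ coming from the geometric growth of $v(X_k)$; that is how the paper converts ``$v$ doubles after moving a controlled distance'' into $\sup_{\mathcal B_r}v\gtrsim r^{1/2}$ for all $r$. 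Your two-sided relation $D\asymp v^2$ on the slice is correct and is indeed an ingredient of that telescoping bound, but the single-maximization shortcut replacing the iteration does not close.
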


\begin{proof} The proof follows the lines of \cite{C3}[Lemma 7] (see also \cite{CRS}[Proposition 3.3]).\\
Given a point $X_0 \in \mathcal B_1^+(v)$ (to be chosen close to the free boundary point $0 \in F(v)$) we construct a sequence of points $(X_k)_k \subset  \mathcal B_1$ such that $$v(X_{k+1})\geq(1+\delta)v(X_k), \quad |X_{k+1}- X_k| \leq C(\eta) \mathrm{dist}(X_{k},\{v=0\}),$$ with $\delta$ small depending on $\eta$.

Then, using \eqref{c3change} %the fact that $d(X_k) \sim v^2(X_k)$
and that $(v(X_k))_k$ grows geometrically, we find \begin{align*}|X_{k+1} - X_0| &\leq \sum_{i=0}^{k} |X_{i+1}-X_i|  \leq C(\eta) \sum_{i=0}^k \mathrm{dist}(X_{i},\{v=0\}) \\ & \leq \frac{C(\eta)}{\eta^2} \sum_{i=0}^k v^2(X_{i}) \leq c(\eta) v^2(X_{k+1}) %\sim d(X_{k+1})
.\end{align*}
Hence for a sequence of radii $r_k = \mathrm{dist}(X_k,\{v=0\}),$%'s of size $v^2(X_k)$
we have that $$\sup_{\mathcal B_{r_k}(X_0)} v \geq c r_k^{1/2}$$ from which we obtain that
 $$\sup_{\mathcal B_{r}(X_0)} v \geq c r^{1/2}, \quad \text{for all $r \geq |X_0|.$}$$
The conclusion follows by letting $X_0$ go to $0\in F(v)$.

We now show that the sequence of $X_k$'s  exists. After scaling, assume we constructed $X_k$ such that $$v(X_k) = 1.$$ Let us call with $Y_k \in F(v)$ the point where the distance from $X_k$ to $\{v=0\}$ is achieved. By \eqref{c12*} and \eqref{c3change}, we get $$c(\eta) \leq r_k=|X_k-Y_k| \leq C(\eta).$$
Assume by contradiction that we cannot find $X_{k+1}$ in $\mathcal B_M(X_k)$ with $M$ large to be specified later, such that $$v(X_{k+1}) \geq 1+\delta.$$ Then $v \leq 1+\delta +w$ with $w$ harmonic in $B_M^+(X_k)$ and such that $$w=0 \quad \text{on $\{x_{n+1} =0\}$}, \quad w=v \quad \text{on $\p B_M(X_k) \cap \{x_{n+1}>0\}$}.$$
Thus, we have
$$w \leq C(n) \frac{x_n}{M} \sup_{B_M^+(X_k)} v \leq C \eta^{-1} x_n M^{-1/2} \leq \delta \quad \text{in $B:=B_{r_k}(X_k),$}$$ for $M$ sufficiently large depending on $\delta$. Thus, \be\label{bound1}v \leq 1+2\delta \quad \text{in $B.$}\ee
On the other hand, $v(Y_k)=0, Y_k \in \p B$. Thus from the H\"older continuity of $v$ we find \be\label{bound2}v \leq \frac 1 2, \quad \text{in $B_{c(\eta)}(Y_k)$}.\ee
If $\delta$ is sufficiently small \eqref{bound1}-\eqref{bound2} contradict that $$1=v(X_k) \leq \fint_{B} v.$$ \end{proof}
The following lemma is on the convergence of sequences
of minimizers.
\begin{lem}\label{compact}
Let $(G_k)_k$ be a sequence of local minimizer in $B_1$ uniformly bounded in $L^{2}(B_1)$. Then, up to a subsequence, there exists a limit function $G_\infty$ such that
\begin{itemize}
\item $G_\infty \in H^{1}_{\loc}(B_1)\cap C^{0,1/2}_\loc(\overline{B_1})$;
  \item $G_k \to G_\infty$ in $C^{0,\alpha}_\loc(\overline{B_1})$, for every $\alpha \in (0,1/2)$;
  \item $G_k \rightharpoonup G_\infty$ weakly in $H^{1}_\loc(B_1)$;
  \item $G_\infty$ is a local minimizer in $B_1$.
\end{itemize}
\end{lem}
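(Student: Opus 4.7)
The plan is to extract $G_\infty$ by a standard uniform compactness argument, and then establish its minimality via a cutoff-and-pass-to-the-limit construction of competitors.

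For the compactness side, I would first note that the argument behind Proposition \ref{uniform.reg} actually delivers a \emph{universal} Morrey bound $\int_{B_r(X_0)}|\nabla G_k|^2\,\mathrm{d}X\leq C r^n$ for every ball $B_r(X_0)\subset B_1$. Combined with the uniform $L^2$ assumption, this bounds $(G_k)$ uniformly in $H^1_{\loc}(B_1)$ and, via Morrey's embedding, in $C^{0,1/2}_{\loc}(\overline{B_1})$. Arzel\`a--Ascoli and Banach--Alaoglu then yield, up to a subsequence, a limit $G_\infty \in H^1_{\loc}\cap C^{0,1/2}_{\loc}$ with $G_k\to G_\infty$ in $C^{0,\alpha}_{\loc}$ for every $\alpha<1/2$ and $G_k\rightharpoonup G_\infty$ weakly in $H^1_{\loc}$; this settles the first three bullets.

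The heart of the matter is minimality. Fix $B_r\subset B_1$ and a competitor $V\in H^1(B_r;\R^m)$ with $V=G_\infty$ on $\partial B_r$. For $\rho<r$ I would pick a cutoff $\eta_\rho\in C_c^\infty(B_r)$ with $\eta_\rho\equiv 1$ on $B_\rho$, $0\le \eta_\rho\le 1$ and $|\nabla\eta_\rho|\le C/(r-\rho)$, and set $V_k:=\eta_\rho V+(1-\eta_\rho)G_k$, which coincides with $G_k$ on $\partial B_r$ and is therefore admissible for $G_k$. Since $V_k\equiv V$ on $\mathcal B_\rho$, the crude estimate $\mathcal L_n(\{|V_k|>0\}\cap\mathcal B_r)\le \mathcal L_n(\{|V|>0\}\cap\mathcal B_r)+\omega_n(r^n-\rho^n)$ handles the measure term; for the Dirichlet energy, expanding $\nabla V_k=\eta_\rho\nabla V+(1-\eta_\rho)\nabla G_k+\nabla\eta_\rho(V-G_k)$ and applying the convex-combination inequality $|\eta_\rho a+(1-\eta_\rho)b|^2\le \eta_\rho|a|^2+(1-\eta_\rho)|b|^2$ produces
\[
\int_{B_r}|\nabla V_k|^2\,\mathrm{d}X \le \int_{B_r}|\nabla V|^2\,\mathrm{d}X + \int_{B_r\setminus B_\rho}|\nabla G_k|^2\,\mathrm{d}X + R_k(\rho),
\]
where $R_k(\rho)$ collects the two cross terms arising from $\nabla\eta_\rho$.

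Substituting into $\mathcal J(G_k,B_r)\le \mathcal J(V_k,B_r)$ and cancelling the annular Dirichlet integral on both sides leaves
\[
\int_{B_\rho}|\nabla G_k|^2\,\mathrm{d}X + \mathcal L_n(\{|G_k|>0\}\cap\mathcal B_r) \le \mathcal J(V,B_r) + \omega_n(r^n-\rho^n) + R_k(\rho).
\]
Taking $\liminf_k$: weak $H^1$-lower semicontinuity controls the Dirichlet piece; for the measure piece, uniform convergence $G_k\to G_\infty$ forces $\{|G_\infty|>0\}\cap K\subset \{|G_k|>0\}$ on any compact $K$ for $k$ large, yielding Fatou-type lower semicontinuity; and $R_k(\rho)\to R_\infty(\rho)$ by pairing uniform convergence of $G_k$ with weak $L^2$-convergence of $\nabla G_k$. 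The genuinely delicate step is then sending $\rho\to r^-$: since $V-G_\infty\in H^1_0(B_r)$, Hardy's inequality delivers $\int_{B_r\setminus B_\rho}|V-G_\infty|^2/(r-\rho)^2\to 0$, which kills the quadratic piece of $R_\infty(\rho)$, while absolute continuity of $\int(|\nabla V|^2+|\nabla G_\infty|^2)\,\mathrm{d}X$ on the shrinking annulus handles the linear cross term via Cauchy--Schwarz. The resulting inequality is the minimality $\mathcal J(G_\infty,B_r)\le \mathcal J(V,B_r)$.
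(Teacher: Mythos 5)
Your argument reaches the right conclusion and is essentially sound, but it takes a genuinely different route from the paper in both halves of the lemma, so let me compare.

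\textbf{Minimality step.} The paper does \emph{not} use a cutoff-interpolation competitor. Instead, it first reduces to continuous competitors $G_\infty+\Psi$, and then builds
$G_{k,\eps}=\sum_i (g^i_k+\psi^i-\eps\eta)_+e^i - (g^i_k+\psi^i+\eps\eta)_- e^i$
with $\eta\equiv 1$ on a neighbourhood of $\overline{B_r}$. The $\eps$-truncation gives the \emph{hard} inclusion $\{g^i_k+\psi^i-\eps>0\}\cap\overline{B_r}\subseteq\{g^i_\infty+\psi^i>0\}\cap\overline{B_r}$ for $k$ large, which controls the measure of the positivity set of the competitor directly, and the measure of $\{|G_k|>0\}$ is localized to $\mathcal B_r$ by subtracting the contribution on the annulus. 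Your route avoids this truncation entirely: you use the convexity of $|\cdot|^2$ to trade the Dirichlet energy of $V_k$ for that of $V$ plus an annular contribution of $\nabla G_k$, cancel the annulus against $\mathcal J(G_k,B_r)$, and then invoke weak lower semicontinuity for the energy and Fatou for the positivity set. The Fatou argument is correct — uniform convergence gives $\chi_{\{|G_\infty|>0\}}\leq\liminf_k\chi_{\{|G_k|>0\}}$ pointwise, not the set inclusion for fixed $k$ that you state, but the conclusion you draw from it is the right one. The removal of the annular error via Hardy plus absolute continuity is also fine. Your proof is the classical ``cut-and-paste + semicontinuity'' argument and is arguably more elementary; the paper's $\eps$-truncation is more hands-on (it compares positivity sets pointwise instead of invoking Fatou) and is the device the scalar thin one-phase literature tends to use.

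\textbf{Compactness step.} Here you take a shortcut that conceals a small but real issue. You invoke the Morrey bound $\int_{B_r(X_0)}|\nabla G_k|^2\leq Cr^n$ from Proposition \ref{uniform.reg} as if the constant were universal, and deduce the uniform $H^1_\loc$ bound directly. But tracing through that iteration, the constant in front of $r^n$ contains a contribution proportional to $\int_{B_1}|\nabla G_k|^2$ (the seed of the iteration does not drop out), and the hypothesis of the lemma bounds only $\|G_k\|_{L^2(B_1)}$, not the Dirichlet energy. The paper handles exactly this point by first deriving a Caccioppoli inequality
$\int_{B_{r/2}}|\nabla g^i_{k,\pm}|^2\leq \tfrac{C}{r^2}\int_{B_r}(g^i_{k,\pm})^2$
from the inner variation $G_{k,\eps}=G_k-\eps g^i_{k,\pm}\eta^2 e^i$; this converts the $L^2$ bound into a uniform $H^1_\loc$ bound, after which the Morrey decay (with a now-controlled seed) and the $C^{0,1/2}_\loc$ estimate become genuinely uniform. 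You should insert this Caccioppoli step before appealing to Proposition \ref{uniform.reg}; once it is in place, the rest of your compactness argument and your minimality argument are correct.
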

\begin{proof}
By Proposition \ref{uniform.reg} we already know that $G_k \to G_\infty$ uniformly on every compact set of $B_1$ and in $C^{0,\alpha}_\loc(\overline{B})$, for every $\alpha \in (0,1/2)$. Moreover, by Ascoli-Arzel\'a theorem it follows that $G_\infty \in C^{0,1/2}(\overline{B})$. Now, let us prove that the sequence is uniformly bounded in $H^{1}_\loc(B_1)$ in order to ensure the weak convergence of sequence $G_k$. Fixed $i=1,\dots,m$ and $r \in (0,1)$, consider the competitor $G_{k,\eps}= G_k -\eps g_{k,\pm}^i \eta^2 e^i$, with $\eta \in C^\infty_c(B_{r})$ such that
$$
0\leq \eta \leq 1,\quad \eta \equiv 1\mbox{ on }B_{r/2},\quad \abs{\nabla \eta} \leq \frac{C}{r}
$$
and $\eps>0$ small enough. Note that $G_{k,\eps}= G_k$ on $\partial B_r$ and $\{\abs{G_k}>0\} =\{\abs{G_{k,\eps}}>0\}$. Therefore, from the local minimality of $G_k$ we get $\mathcal{J}(G_k,B_r) \leq \mathcal{J}(G_{k,\eps},B_r)$, which implies
$$
\int_{B_r}{\langle \nabla g^i_k, \nabla (g_{k,\pm}^i \eta^2)\rangle \mathrm{d}X} \leq
\frac{\eps}{2}\int_{B_r}{\abs{\nabla(g_{k,\pm}^i \eta^2)}^2 \mathrm{d}X}.
$$
Finally, letting $\eps \to 0$ and proceeding as in the proof of the standard Caccioppoli inequality, we deduce
\be\label{caccio}
\int_{B_{r/2}}{\abs{\nabla g^i_{k,\pm}}^2\mathrm{d}X} \leq
\frac{C}{r^2}\int_{B_r}{(g_{k,\pm}^i)^2  \mathrm{d}X},
\ee
with $C>0$ universal constant and $r\in (0,1)$. Thus, since the sequence $(G_k)_k$ is uniformly bounded in $L^2(B_1)$, by \eqref{caccio} we get that the sequence is uniformly bounded in $H^{1}_\loc(B_1)$ and it weakly converges to some $G_\infty \in H^{1}(B_1)$.

In conclusion, let us show that for every $r \in (0,1)$ we have $$\mathcal{J}(G_\infty,B_r) \leq \mathcal{J}(G_\infty+\Psi,B_r),\quad\mbox{for every }\Psi=(\psi^1,\cdots,\psi^m) \in H^{1}_0(B_r;\R^m).$$
Since we already know by Proposition \ref{uniform.reg} that there exists a local minimizer H\"{o}lder continuous of class $C^{0,1/2}$, we can assume that $\Psi$ is continuous. Therefore, for every $k>0$ let us consider the competitor
$$
G_{k,\eps} = \sum_{i=1}^m (g^i_k+\psi^i-\eps \eta)_+ e^i - (g^i_k+\psi^i+\eps \eta)_- e^i,
$$
with $\eta \in C^\infty_c(B_{(1+r)/2})$ such that $0\leq\eta\leq1$ and  $\eta \equiv 1 $ on a neighborhood of $\overline{B_r}$.\\ Hence, by the local minimality of $G_k$ in $B_{(1+r)/2}$, namely  $\mathcal{J}(G_k,B_{(1+r)/2})\leq \mathcal{J}(G_{k,\eps},B_{(1+r)/2})$, we have
\begin{align*}
\mathcal{L}_n(\mathcal{B}_{(1+r)/2}\cap \{|G_{k}|>0\} ) \leq &\, \sum_{i=1}^m\int_{B_{(1+r)/2}}{\abs{\nabla \psi^i}^2 + 2 \langle \nabla \psi^i,\nabla g^i_k\rangle\mathrm{d}X}+\\
&\,+\eps \sum_{i=1}^m\int_{\mathrm{supp}\eta \setminus B_r}{\eps\abs{\nabla \eta}^2 + 2\langle \nabla \eta,\nabla (g^i_k +\psi) \rangle\mathrm{d}X}+\\
&\, +\mathcal{L}_n(\mathcal{B}_{(1+r)/2}\cap \{|G_{k,\eps}|>0\} ).
\end{align*}
In particular, localizing the measure of the positivity set in $\mathcal{B}_r$, we get
\begin{align*}
\mathcal{L}_n(\mathcal{B}_r\cap \{|G_{k}|>0\} ) \leq &\, \sum_{i=1}^m\int_{B_{(1+r)/2}}{\abs{\nabla \psi^i}^2 + 2 \langle \nabla \psi^i,\nabla g^i_k\rangle\mathrm{d}X}+ C\eps+\\
&\,+\int_{\mathcal{B}_{(1+r)/2}}{\chi_{\{|G_{k,\eps}|>0\}}\mathrm{d}x} - \int_{\mathcal{B}_{(1+r)/2}\setminus \overline{\mathcal{B}_r}}{\chi_{\{|G_{k}|>0\}}\mathrm{d}x},
\end{align*}
where we used that $(G_k)_k$ is uniformly bounded in $H^{1}(B_{(1+r)/2})$. Since
\begin{align*}
\{g^i_k - \eps \eta>0 \}\setminus \overline{B_r} &\subseteq \{g^i_k >0\}\setminus \overline{B_r}\\
\{g^i_k + \eps \eta<0 \}\setminus \overline{B_r} &\subseteq \{g^i_k <0\}\setminus \overline{B_r}
\end{align*}
and by the uniform convergence
\begin{align*}
\{g^i_k+\psi^i -\eps >0 \}\cap\overline{B_r} &\subseteq \{g^i_{\infty} +\psi^i >0\}\cap\overline{B_r}\\
\{g^i_k+\psi^i + \eps <0 \}\cap \overline{B_r} &\subseteq \{g^i_{\infty}+\psi^i <0\}\cap\overline{B_r},
\end{align*}
we deduce\begin{align*}
\mathcal{L}_n(\mathcal{B}_r \cap \{|G_{k}|>0\}) \leq &\,\int_{B_{(1+r)/2}}{ \left(\abs{\nabla \Psi}^2 + 2 \langle \nabla \Psi,\nabla G_k\rangle\right)\mathrm{d}X}\\
&\, +\mathcal{L}_n(\mathcal{B}_r \cap \{|G_{{\infty}}+\Psi|>0\})+C\eps.
\end{align*}

Now, using that $G_k \rightharpoonup G_\infty$ weakly in $H^{1}_\loc(B_1)$ and uniformly on $\overline{B_r}$, we obtain
$$
\mathcal{J}(G_\infty,B_r) \leq \int_{B_r}{\abs{\nabla (G_\infty + \Psi)}^2\mathrm{d}X} + \mathcal{L}_n(\mathcal{B}_r \cap \{|G_{\infty}+\Psi|>0\} )+C\eps
$$
for every $\eps>0$, which implies the desired inequality.
\end{proof}

Finally, we conclude the section by proving the first corollaries of the non-degeneracy results Proposition \ref{non-deg2}. These density estimates for the positivity set of $\abs{G}$ are a obtained by a straightforward combination of the non-degeneracy condition \eqref{non-deg} and the optimal regularity of local minimizer.
\begin{cor}\label{lower.H}
Let $G$ be a local minimizer in $B_1$ and $0\in F(G)$. Then, for every $r \in (0,1/2)$ there exists $X_r \in \mathcal{B}_r$ be such that
  $$
  \mathcal{B}_{C_0r}(X_0) \subset \mathcal{B}_r^+(G),
  $$
  for some universal constant $C_0>0$. Equivalently, there exists $\eps_0>0$ such that
$$
\mathcal{L}_n(\mathcal{B}_r\cap \{|G|>0\}) \geq \eps_0 \omega_n r^n.
$$
\end{cor}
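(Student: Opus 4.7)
The plan is to combine the two ingredients already in hand: the nondegeneracy estimate $\sup_{\mathcal{B}_r}|G|\geq c\,r^{1/2}$ from Proposition \ref{non-deg2} and the optimal regularity bound $|G(X)|\leq C_1\,\mathrm{dist}(X,\partial\{|G|>0\})^{1/2}$ from Proposition \ref{uniform.reg}. The combination will force the free boundary to stay a definite distance away from a point where $|G|$ is already large.

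First, I would apply Proposition \ref{non-deg2} at the scale $r/2$ rather than $r$, so that the point I produce stays well inside $\mathcal{B}_r$. Concretely, since $0\in F(G)$, there exists $X_r\in\mathcal{B}_{r/2}$ with
\[
|G(X_r)|\geq c\,(r/2)^{1/2}.
\]
Next, applying the optimal regularity estimate \eqref{holder} at the point $X_r$ gives
\[
c\,(r/2)^{1/2}\leq |G(X_r)|\leq C_1\,\mathrm{dist}(X_r,\partial\{|G|>0\})^{1/2},
\]
so that $\mathrm{dist}(X_r,\partial\{|G|>0\})\geq c_3\, r$ for the universal constant $c_3=\tfrac{1}{2}(c/C_1)^2$. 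Consequently $\mathcal{B}_{c_3 r}(X_r)\subset\{|G|>0\}$.

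It remains to check that the ball is actually contained in $\mathcal{B}_r$: since $X_r\in\mathcal{B}_{r/2}$, the choice $C_0:=\min(c_3,1/2)$ guarantees $\mathcal{B}_{C_0 r}(X_r)\subset\mathcal{B}_r\cap\{|G|>0\}=\mathcal{B}_r^+(G)$, which proves the first claim. The equivalent density lower bound follows immediately by taking Lebesgue measure:
\[
\mathcal{L}_n(\mathcal{B}_r\cap\{|G|>0\})\geq\mathcal{L}_n(\mathcal{B}_{C_0 r}(X_r))=C_0^n\,\omega_n r^n,
\]
so one may set $\eps_0=C_0^n$.

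There is essentially no obstacle here; the statement is a clean bookkeeping consequence of the two deeper results already established. The only minor technicality is the rescaling trick that places $X_r$ in $\mathcal{B}_{r/2}$ rather than $\mathcal{B}_r$, which is what allows the ball $\mathcal{B}_{C_0 r}(X_r)$ to fit inside $\mathcal{B}_r$ without any loss of universality of the constant $C_0$.
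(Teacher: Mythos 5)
Your proof is correct and follows essentially the same route as the paper: combine the nondegeneracy of Proposition \ref{non-deg2} with the optimal regularity of Proposition \ref{uniform.reg} to push the free boundary a distance $\sim r$ away from a point where $|G|\gtrsim r^{1/2}$ (the paper invokes the $C^{0,1/2}$ seminorm of $|G|$ where you invoke \eqref{holder}, which is the same estimate in different clothing). Your extra care in applying the nondegeneracy at scale $r/2$ so that the corkscrew ball fits inside $\mathcal{B}_r$ is a detail the paper glosses over.
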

\begin{proof}
  The proof of the interior corkscrew condition is a combination of Proposition \ref{uniform.reg} and Proposition \ref{non-deg2}. More precisely, on one hand for $r$ small enough there exists $X_r\in \mathcal{B}^+_r(G)$ such that $\abs{G}(X_r)\geq C r^{1/2}$. On the other one, since $\abs{G}$ is of class $C^{0,1/2}$, by setting
  $$
  C_0 = \min\left\{1,\frac{C}{[|G|]_{C^{0,1/2}}}\right\},
  $$
  we have that $\abs{G}>0$ in $\mathcal{B}_{C_0 r}^+(|G|)$, which proves the claimed lower bound.
\end{proof}

\begin{rem}
The following estimate is a specific feature of the non-local attitude of the vectorial thin one-phase problem. Indeed, for the local case \cite[Remark 2.2]{MTV2} the authors highlight that, unlike in \cite{CSY,MTV1} where it was assumed at least one component $g^i$ to be positive, they cannot hope to have a density estimate from above on the positivity set.\\ Instead, since in our case the traces are $(-\Delta)^{1/2}$-harmonic in $\{ |G|>0\}$, the upper bound holds true thanks to the different local regularity of $(-\Delta)^{1/2}$-harmonic functions near their zero set depending on whether or not they change sign.
\end{rem}

\begin{cor}\label{upper.H}
Let $G$ be a local minimizer in $B_1$ and $0\in F(G)$. Then, for every $r \in (0,1/2)$
\be\label{above}
 \mathcal{L}_n(\mathcal{B}_r^+(G))\leq (1-\eps_0)\omega_n r^n,
 \ee
 for some universal constant $\eps_0>0$.
\end{cor}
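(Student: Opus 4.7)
The plan is to argue by compactness and contradiction: if the upper density bound failed, a suitable rescaling would produce a blow-up minimizer that is simultaneously componentwise harmonic in $B_1$ and vanishes to order $1/2$ at the origin, contradicting the linear vanishing of harmonic functions.

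First, I would negate the claim to obtain local minimizers $G_k$ with $0\in F(G_k)$, radii $r_k\in(0,1/2)$, and a vanishing sequence $\eta_k\to 0^+$ satisfying $\mathcal{L}_n(\mathcal{B}_{r_k}\cap\{|G_k|=0\})\le \eta_k\omega_n r_k^n$. The rescalings $H_k(X):=r_k^{-1/2}G_k(r_k X)$ are local minimizers in $B_{1/r_k}$ with $0\in F(H_k)$ and $\mathcal{L}_n(\mathcal{B}_1\cap\{|H_k|=0\})\le\eta_k\omega_n$.

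Next, I would compare each $H_k$ with its componentwise harmonic replacement $\widetilde H_k$ on $B_1$ (so $\widetilde H_k-H_k\in H^1_0(B_1)$ and each $\widetilde h_k^i$ is harmonic in $B_1$). Using the orthogonality $\int_{B_1}\langle\nabla \widetilde H_k,\nabla(H_k-\widetilde H_k)\rangle\,\mathrm{d}X=0$ and the minimality of $G_k$ against the rescaled competitor associated to $\widetilde H_k$, the gain of surface measure (bounded by $\eta_k\omega_n$) dominates the Dirichlet-energy loss, yielding
$$\int_{B_1}|\nabla(H_k-\widetilde H_k)|^2\,\mathrm{d}X \le \eta_k\omega_n\longrightarrow 0.$$
The Poincar\'e inequality on $H^1_0(B_1)$ then gives $H_k-\widetilde H_k\to 0$ in $L^2(B_1)$. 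By Proposition \ref{uniform.reg} and the compactness Lemma \ref{compact}, along a subsequence $H_k\to H_\infty$ uniformly on compact subsets of $B_1$ with $H_\infty$ a local minimizer; the $L^2$-smallness forces $\widetilde H_k\to H_\infty$ as well, and by stability of harmonicity under uniform convergence $H_\infty$ is componentwise harmonic in $B_1$.

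Finally, the universal non-degeneracy estimate of Proposition \ref{non-deg2}, combined with uniform convergence, yields $0\in F(H_\infty)$ together with $\sup_{\mathcal{B}_r}|H_\infty|\ge c r^{1/2}$ for every $r\in(0,1/4)$. On the other hand, each component of $H_\infty$ is harmonic in $B_1$ and vanishes at $0$, so the standard interior gradient estimate for harmonic functions furnishes the linear bound $|H_\infty(X)|\le C|X|$ on $\mathcal{B}_{1/4}$. Letting $r\to 0^+$ in $c r^{1/2}\le C r$ produces the desired contradiction. The main obstacle I anticipate is upgrading the energy smallness $\|\nabla(H_k-\widetilde H_k)\|_{L^2}\to 0$ to genuine componentwise harmonicity of the blow-up $H_\infty$; this is precisely the step where the non-local nature of the problem (each trace being $(-\Delta)^{1/2}$-harmonic on the positivity set, as highlighted in the preceding remark) is essential, since it forces the harmonic replacement to fill the vanishing zero-set completely in the limit.
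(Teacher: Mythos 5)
Your proposal is correct and follows essentially the same route as the paper: a contradiction/compactness argument, componentwise harmonic replacement whose energy gap is controlled by the measure of the zero set (hence tends to zero), so the blow-up limit is componentwise harmonic and therefore vanishes linearly at the origin, contradicting the $r^{1/2}$ non-degeneracy of Proposition \ref{non-deg2}. The only cosmetic differences are that you pass to the harmonic limit via Poincar\'e and stability of harmonicity where the paper invokes lower semicontinuity of the Dirichlet energy, and that you phrase the contradiction via the gradient estimate rather than via H\"older regularity of exponent $\alpha>1/2$.
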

\begin{proof}
Since $\abs{G}$ is non-negative, up to rescaling, condition \ref{above} is equivalent to
$$\mathcal{L}_n(\mathcal{B}_1\cap \{\abs{G}=0\})\geq \eps_0.$$ Thus, suppose there exists a sequence $(G_k)_k$ of local minimizers in $B_1$ such that $0 \in F(G_k)$ and
$$
\lim_{k\to \infty} \mathcal{L}_n(\mathcal{B}_1\cap\{\abs{G_k}=0\}) =0.
$$
By Proposition \ref{uniform.reg} and Lemma \ref{compact}, we already know that $G_k \to G_\infty$ weakly in $H^{1}(B_{1/2})$ and uniformly on every compact set of $B_{1/2}$. Moreover, $G_\infty\in H^{1}_{\loc}(B_{1/2}^+)\cap C^{0,1/2}_\loc(\overline{B_{1/2}})$ is a local minimizer in $B_{1/2}$. Now, let $\tilde{g}^i_k \colon B_1 \to \R$ be the  harmonic replacement of $g^i_k$ in $B_1$, i.e. be such that
$$
\begin{cases}
\Delta \tilde{g}^i_k =0 & \mbox{in } B_1 \\
  \tilde{g}^i_k=g^i_k & \mbox{on }\partial B_1.
\end{cases}
$$
By the minimality of $G_k$, given the competitor $\widetilde{G}_k = (g^1_k, \dots, \tilde{g}^i_k, \dots, g^m_k)$, from \eqref{u-uh} we deduce \be\label{convergence}
\int_{B_1}{\abs{\nabla (g^i_k-\tilde{g}^i_k)}^2\mathrm{d}X} \leq
\mathcal{L}_n(\mathcal{B}_1\cap\{\abs{G_k} =0\}) \to 0,
\ee
as $k \to \infty$. Thus, up to a subsequence, the sequence $(\widetilde{G}_k)_k$ do converge uniformly on every compact set of $B_{1/2}$ to some function $\widetilde{G}_\infty\in H^{1}_{\loc}(B_{1/2}^+)$ which is harmonic in $B_{1/2}$. Finally, by applying Fatou's Lemma to \eqref{convergence}, we get
$$
\int_{B_{1/2}}{\abs{\nabla (g^i_\infty-\tilde{g}^i_\infty)}^2\mathrm{d}X}
= 0,
$$
namely for every $i=1,\cdots,m$ we deduce that $g^i_\infty$ is harmonic in $B_{1/2}$ such that $0\in F(G_\infty)$. \\Hence, we already know that $g^i_\infty \in C^{0,\alpha}_\loc(\R^{n+1})$, for every $\alpha \in (0,1)$, in contradiction with Proposition \ref{non-deg2} for $\alpha >1/2$.
\end{proof}

\section{Weiss monotonicity formula}\label{weiss.sect}
In this section we establish a Weiss type monotonicity formula in the spirit of \cite{MTV1,MTV2}. In the case $m=1$, our result recovers the one in \cite{AP} for the scalar case. As it is well known in the literature, this result will imply the convergence of a blow-up sequence to an homogenous global minimizer.\\

For a vector-valued function $G \in H^{1}(B_1; \R^m)$, let us consider
\be
W(X_0,G,r) = \frac{1}{r^n}\mathcal{J}(G,B_r(X_0))- \frac{1}{2 r^{n+1}}\int_{\partial^+ B^+_r(X_0)}{\abs{G}^2\mathrm{d}\sigma}
\ee
The monotonicity of $r \mapsto W(X_0,g,r)$ is a fundamental tool for the classification of the blow-up limits.
\begin{thm}\label{weiss}
  Let $G$ be a local minimizer of \eqref{min.functional} and $X_0 \in F(G)$. Then, the Weiss type functional $r\mapsto W(X_0,G,r)$ is monotone non-decreasing for every $r \in (0,1-\abs{X_0})$. More precisely, we have
  \be\label{weiss.form}
  \frac{d}{dr} W(X_0,G,r) \geq \frac{1}{r^{n+2}}\sum_{i=1}^m \int_{\partial B_r(X_0)}{\left( \langle \nabla g^i, X-X_0 \rangle - \frac12 g^i \right)^2\mathrm{d}\sigma}.
  \ee
  Moreover, $W(X_0,G,\cdot)$ is constant in $(0,+\infty)$ if and only if $G$ is $s$-homogeneous with respect to $X_0$.
\end{thm}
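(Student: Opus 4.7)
The strategy is to derive the inequality by differentiating $W(X_0,G,r)$ directly and bounding the unfavourable term $-n\mathcal J(G,B_r(X_0))/r^{n+1}$ from below by comparing $G$ on $B_r(X_0)$ with the $1/2$-homogeneous extension of its trace on $\partial B_r(X_0)$. After translating so that $X_0=0$, the coarea formula gives for a.e.\ $r$
\[
\frac{d}{dr}\!\left[\frac{\mathcal J(G,B_r)}{r^n}\right] = \frac{1}{r^n}\!\left(\int_{\partial B_r}\abs{\nabla G}^2 d\sigma + \mathcal H^{n-1}(\partial\mathcal B_r\cap\set{\abs{G}>0})\right) - \frac{n}{r^{n+1}}\mathcal J(G,B_r),
\]
while writing $\int_{\partial B_r}\abs{G}^2 d\sigma = r^n \int_{S^n}\abs{G(r\theta)}^2 d\theta$ and differentiating yields
\[
\frac{d}{dr}\!\left[\frac{1}{2r^{n+1}}\int_{\partial B_r}\abs{G}^2 d\sigma\right] = \frac{1}{r^{n+1}}\int_{\partial B_r} G\cdot\p_\rho G\, d\sigma - \frac{1}{2r^{n+2}}\int_{\partial B_r}\abs{G}^2 d\sigma.
\]

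The competitor I would use is
\[
\tilde G(X) := \left(\abs{X}/r\right)^{1/2} G\!\left(rX/\abs{X}\right), \quad X\in B_r,
\]
which agrees with $G$ on $\partial B_r$ and has a conical positivity set on $\mathcal B_r$. A spherical-coordinate computation, splitting $\abs{\nabla G}^2 = \abs{\p_\rho G}^2 + \abs{\nabla_\tau G}^2$ and using $\int_0^r \rho^{n-1}d\rho = r^n/n$, gives
\[
\mathcal J(\tilde G,B_r) = \frac{1}{4nr}\int_{\partial B_r}\abs{G}^2 d\sigma + \frac{r}{n}\int_{\partial B_r}\abs{\nabla_\tau G}^2 d\sigma + \frac{r}{n}\mathcal H^{n-1}(\partial\mathcal B_r\cap\set{\abs{G}>0}),
\]
and the minimality $\mathcal J(G,B_r)\leq \mathcal J(\tilde G,B_r)$ turns this identity into an upper bound on $\mathcal J(G,B_r)$.

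Plugging this upper bound into the $-n\mathcal J(G,B_r)/r^{n+1}$ piece, the tangential-gradient term and the positivity-set term cancel exactly with their counterparts from the coarea formula, leaving
\[
\frac{d}{dr}W \geq \frac{1}{r^n}\int_{\partial B_r}\abs{\p_\rho G}^2 d\sigma - \frac{1}{r^{n+1}}\int_{\partial B_r} G\cdot\p_\rho G\, d\sigma + \frac{1}{4r^{n+2}}\int_{\partial B_r}\abs{G}^2 d\sigma.
\]
Using $r\,\p_\rho g^i = \langle\nabla g^i,X\rangle$ on $\partial B_r$ and completing the square componentwise, the right-hand side becomes $\frac{1}{r^{n+2}}\sum_{i=1}^m\int_{\partial B_r}(\langle\nabla g^i,X\rangle - \tfrac12 g^i)^2 d\sigma$, which is precisely \eqref{weiss.form}; monotonicity of $W$ follows.

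For the rigidity, if $W$ is constant on $(0,+\infty)$ the non-negative squared integrand must vanish a.e., so $\langle\nabla g^i,X\rangle = \tfrac12 g^i$ for every $i$. By the Euler identity for homogeneous functions this is exactly $1/2$-homogeneity of $G$ with respect to $0$; conversely, for a $1/2$-homogeneous $G$ both terms in $W$ scale identically in $r$ and $W$ is constant. The one delicate point is the bookkeeping of the prefactors $1/(4nr)$ and $r/n$ in $\mathcal J(\tilde G,B_r)$, which must match precisely the $-n\mathcal J(G,B_r)/r^{n+1}$ term for the tangential and positivity contributions to cancel; conceptually, the choice of the $1/2$-homogeneous extension as competitor is dictated by the scaling \eqref{blow.up}--\eqref{rescale.J}.
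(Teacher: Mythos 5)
Your proposal is correct and follows essentially the same route as the paper: the paper's Lemma 3.3 is exactly your competitor computation with the $1/2$-homogeneous extension of the trace on $\partial B_r$ (with the same prefactors $\tfrac{1}{4nr}$ and $\tfrac{r}{n}$), and the proof of the theorem then differentiates $W$, inserts that bound so the tangential and positivity-set terms cancel, and completes the square componentwise, with the same Euler-identity rigidity argument. No gaps.
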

Through the paper we will always denote with $W(X_0,G,0^+)$ the limit of the Weiss monotonicity formula as $r \to 0^+$.\\
In order to simplify the notation, since the problem is invariant under translation, in the following computations we will assume $X_0=0$ and denote $W(r)=W(0,G,r)$.
\begin{lem}\label{hom.ext}
Let $G$ be a local minimizer of \eqref{min.functional} and $0 \in F(G)$. Then, we get
\begin{align*}
\int_{B_r}{\abs{\nabla G}^2\mathrm{d}X}+ \mathcal{L}_n(\mathcal B^+_r(G)) \leq &\,\, \frac{1}{n}\int_{\partial B_r}{\left(r\abs{\nabla_{S^{n}} G}^2 + \frac14\frac{\abs{G}^2}{r} \right)\mathrm{d}\sigma}+\\
&\,+ \frac{r}{n} \mathcal{H}^{n-1}(\partial \mathcal{B}_r\cap \{|G|>0\}),
\end{align*}
for every $r\in (0,1)$.
\end{lem}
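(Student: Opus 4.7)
The plan is to test the minimality of $G$ on $B_r$ against the $1/2$-homogeneous extension of its boundary trace, namely
\[
V(X) := \Bigl(\frac{|X|}{r}\Bigr)^{1/2} G\Bigl(r \frac{X}{|X|}\Bigr), \qquad X \in B_r.
\]
By construction $V \equiv G$ on $\partial B_r$, and the radial scaling $|V|\lesssim |X|^{1/2}$, $|\nabla V|\lesssim |X|^{-1/2}$, together with the fact that $G|_{\partial B_r}\in H^1(\partial B_r)$ for almost every $r\in(0,1)$ by slicing of $G\in H^1(B_1;\R^m)$, shows that $V\in H^1(B_r;\R^m)$ is an admissible competitor for such radii. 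The local minimality of $G$ then gives $\mathcal J(G,B_r)\leq \mathcal J(V,B_r)$, and both sides being monotone in $r$ extends the resulting inequality to every $r$.

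The Dirichlet energy of $V$ is computed in spherical coordinates $X=\rho\omega$, $\rho\in(0,r)$, $\omega\in S^n$. With $g(\omega):=G(r\omega)$, splitting $|\nabla V|^2$ into radial and tangential parts yields
\[
|\nabla V(\rho\omega)|^2 = \frac{|g(\omega)|^2}{4\rho r} + \frac{|\nabla_{S^n} g(\omega)|^2}{\rho r}.
\]
Integrating against $dX=\rho^n\,d\rho\,d\sigma_{S^n}$ over $(0,r)\times S^n$ produces a prefactor $r^n/n$ from the $\rho$ integration. Transferring back from the unit sphere $S^n$ to $\partial B_r$ via the similarity $\omega\mapsto r\omega$ (which turns $d\sigma_{S^n}$ into $r^{-n}d\sigma$ and $|\nabla_{S^n} g(\omega)|^2$ into $r^2|\nabla_{S^n} G(r\omega)|^2$) leads to
\[
\int_{B_r}|\nabla V|^2\,dX \;=\; \frac{1}{n}\int_{\partial B_r}\!\left(r|\nabla_{S^n} G|^2 + \frac{|G|^2}{4r}\right)d\sigma,
\]
which recovers the first term on the right-hand side of the lemma.

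For the Lebesgue-measure term, note that $V(x,0)=0$ if and only if $G(rx/|x|,0)=0$, hence $\mathcal B_r\cap\{|V|>0\}$ is exactly the open solid sector of $\mathcal B_r$ with vertex $0$ generated by $\partial\mathcal B_r\cap\{|G|>0\}$. Writing this sector in polar coordinates on $\R^n$, one obtains
\[
\mathcal L_n(\mathcal B_r\cap\{|V|>0\}) \;=\; \frac{r}{n}\,\mathcal H^{n-1}(\partial\mathcal B_r\cap\{|G|>0\}),
\]
i.e.\ the second boundary term of the lemma. Plugging the two identities into $\mathcal J(G,B_r)\leq \mathcal J(V,B_r)$ concludes the proof.

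The argument is almost purely computational; the only genuinely delicate point is the admissibility of $V$, and in particular the fact that $G|_{\partial B_r}\in H^{1}(\partial B_r)$ is needed (not merely $H^{1/2}$) for $\int_{B_r}|\nabla V|^2<\infty$. This is the content of the $H^1$-slicing argument above and is the only place where the restriction to a.e.\ $r$ is actually used — irrelevant for the application, since the Weiss-type monotonicity derived in Theorem~\ref{weiss} only requires $\frac{d}{dr}W(r)\geq 0$ almost everywhere.
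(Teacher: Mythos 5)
Your proof is correct and follows essentially the same route as the paper: test minimality against the $1/2$-homogeneous extension of the boundary trace, compute its Dirichlet energy in polar coordinates, and identify the positivity set of the competitor as the cone over $\partial\mathcal B_r\cap\{|G|>0\}$. The only quibble is your closing remark that monotonicity of ``both sides'' upgrades the inequality from a.e.\ $r$ to every $r$ --- the right-hand side is not monotone in $r$ --- but as you correctly observe, the a.e.\ statement is all that the Weiss monotonicity argument requires, and the paper itself does not address the $H^1(\partial B_r)$ trace issue at all.
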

\begin{proof}
  Let us consider now the $1/2$-homogeneous extension $\widetilde{G}=(\tilde{g}_1,\cdots,\tilde{g}_m)$ of the trace of $G$ on $\partial B_r$, defined by
$$
\widetilde{G}(X)=\frac{\abs{X}^{1/2}}{r^{1/2}}G\left(X \frac{r}{\abs{X}}\right).
$$
Then, for every $i=1,\dots,m$ we get
$$
\abs{\nabla \tilde{g}_i}^2(X) = \frac14\frac{1}{r\abs{X}}g^i\left(X \frac{r}{\abs{X}}\right)^2 + \frac{r}{\abs{X}}\abs{\nabla_{S^n} g^i}^2\left(X\frac{r}{\abs{X}} \right).
$$
Integrating over $B_r^+$ and summing for $i=1,\dots,m$, we obtain
\begin{align*}
\int_{B_r}{|\nabla \widetilde{G}|^2\mathrm{d}X}
&= \int_0^r \frac{1}{\rho}\int_{\partial B_\rho}{ \left(\frac14\frac{1}{r}\abs{G}^2\left(X \frac{r}{\rho}\right) + r\abs{\nabla_{S^n}G}^2\left(X\frac{r}{\rho} \right)\right)\mathrm{d}\sigma}\mathrm{d}\rho\\
&= \frac{r}{n} \int_{\partial B_r}{\left( \frac14\frac{\abs{G}^2}{r}+r\abs{\nabla_{S^{n}} G}^2 \right)\mathrm{d}\sigma},
\end{align*}
while for the measure term we have that
$$
\mathcal{L}_n(\mathcal{B}_r\cap \{|G|>0\} ) =
\frac{r}{n} \mathcal{H}^{n-1}(\partial\mathcal{B}_r\cap \{|G|>0\} )
$$
Finally, since $\widetilde{G}= G$ on $\partial B_r$, the minimality assumption $\mathcal{J}(G,B_r) \leq \mathcal{J}(\widetilde{G},B_r)$ gives the claimed inequality.
\end{proof}
\begin{proof}[Proof of Theorem \ref{weiss}]
By the estimate of Lemma \ref{hom.ext}, we immediately get
\begin{align*}
W'(r) = &\,\frac{1}{r^n}\left( \int_{\partial B_r}{ \abs{\nabla G}^2\mathrm{d}X}+ \mathcal{H}^{n-1}(\partial \mathcal{B}_r\cap \{|G|>0\} ) \right)+\\
&\, -\frac{n}{r^{n+1}}\left(\int_{B_r}{ \abs{\nabla G}^2\mathrm{d}X}+ \mathcal{L}_n(\mathcal{B}_r\cap \{|G|>0\}  )\right) +\\
&\, - \frac{1}{r^{n+1}}\sum_{i=1}^m\int_{\partial B_r}{ g^i\partial_r g^i\mathrm{d}\sigma} +\frac{1}{2r^{n+2}}\int_{\partial B_r}{\abs{G}^2\mathrm{d}\sigma}\\
\geq &\, \frac{1}{r^n}\sum_{i=1}^m \int_{\partial B_r}{\left(\abs{\partial_r g^i}^2 - g^i\partial_r u^i + \frac{1}{4r^2}\abs{g^i}^2\right)\mathrm{d}\sigma }\\
= &\, \frac{1}{r^{n}}\sum_{i=1}^m \int_{\partial B_r}{\left( \partial_r g^i- \frac{1}{2r} g^i \right)^2\mathrm{d}\sigma}.
\end{align*}
Finally, since the right hand side of \eqref{weiss.form} is non-negative, we deduce that $W'(r)\equiv 0$ for $r \in (0,+\infty)$ if and only if
$$
\left\langle \nabla g^i(X), \frac{X}{\abs{X}}\right\rangle =\frac{1}{2\abs{X}} g^i(X)\quad \text{ in $\R^{n+1}$,}
$$
i.e. the components $g^i$ are $1/2$-homogeneous in $\R^{n+1}$.
\end{proof}
\section{Compactness and convergence of blow-up sequences}\label{blow}
This section is dedicated to the convergence of the blow-up sequences and the analysis of the
blow-up limits, both being essential for determining the local behavior of the free boundary and for the characterization of the Regular and Singular sets.\\

Let us recall the notion of blow-up sequence associated to a local minimizer $G$ in $B_1$. Given $(X_k)_k \subset F(G)$ and $r_k \searrow 0^+$ such that $B_{r_k}(X_k)\subset B_1$, we define a blow-up sequence by
\be
G_{X_k,r_k}(X) = \frac{1}{r^{1/2}_k}G(X_k+r_k X).
\ee
Then the sequence $(G_{X_k,r_k})_k$ is uniformly H\"{o}lder continuous in the class $C^{0,1/2}$ and locally uniformly bounded in $\R^{n+1}$. Thus, by Lemma \ref{compact}, up to a subsequence, $(G_{X_k,r_k})_k$ converges locally uniformly on every compact set to a function $G_0 \in H^{1}_{\loc}(B_1)\cap C^{0,1/2}_\loc(\overline{B_1})$ such that, for every $R > 0$ the following properties hold
\begin{itemize}
\item $G_{X_k,r_k} \to G_0$ in $C^{0,\alpha}_\loc(\overline{B_R})$, for every $\alpha \in (0,1/2)$;
  \item $G_{X_k,r_k} \rightharpoonup G_0$ weakly in $H^{1}(B_R)$;
  \item $G_0$ is a local minimizer in $B_R$.
\end{itemize}
Moreover, by the non-degeneracy results of the previous section, we can  guarantee the existence of a non-degenerate blow-up limit.
\begin{prop}\label{compact2}
Let $G$  be a local minimizer in $B_1$. Given $(X_k)_k \subset F(G)$ and $r_k \searrow 0^+$ such that $B_{r_k}(X_k)\subset B_1$, for every $R > 0$ the following properties hold (up to extracting a subsequence):
\begin{itemize}
  \item $G_{X_k,r_k} \to G_0$ strongly in $H^{1}(B_R;\R^m)$;
  \item the sequence of the characteristic functions $$\chi(\{|G_{X_k,r_k}|>0\})\to \chi(\{|G_{0}|>0\})$$
      strongly in
$L^1(\mathcal B_R)$;
  \item the sequence of the closed sets $\overline{\mathcal{B}^+_R(G_{X_k,r_k})}$ and its complement in $\R^n$, converge in the Hausdorff sense respectively to $\overline{\mathcal{B}^+_R(G_{0})}$ and $\R^n \setminus \overline{\mathcal{B}^+_R(G_{0})}$
  \item the blow-up limit $G_0$ is non-degenerate at zero, i.e. there exists a dimensional constant $c_0 > 0$ such that
      $$
      \sup_{\mathcal{B}_r } \abs{G_0} \geq c_0 r^{1/2}\quad\text{for every $r>0$}.
      $$
\end{itemize}
\end{prop}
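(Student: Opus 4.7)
The weak $H^1$ convergence, the uniform $C^{0,\alpha}$ convergence and the fact that $G_0$ is a local minimizer on every ball $B_R$ are already delivered by Lemma \ref{compact}, so the plan is to upgrade these four items separately, reusing the regularity, non-degeneracy and density results of Section \ref{local}. It is convenient to fix $R>0$ and relabel $G_k := G_{X_k,r_k}$; note that $0\in F(G_k)$ for every $k$ since $X_k\in F(G)$ and that all $G_k$ are local minimizers in $B_{1/r_k}\supset B_R$ (for $k$ large).

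\textbf{Hausdorff convergence of the free boundaries and $L^1$-convergence of $\chi_{\{|G_k|>0\}}$.} I would treat these two items together, since both rest on the two-sided density estimates in Corollaries \ref{lower.H} and \ref{upper.H}. The inclusion $\overline{\mathcal B_R^+(G_0)}\subset \liminf \overline{\mathcal B_R^+(G_k)}$ follows from the locally uniform convergence $G_k\to G_0$. For the reverse inclusion, at any point $Y_0$ belonging to $\limsup \overline{\mathcal B_R^+(G_k)}$ I would pick $Y_{k_j}\to Y_0$ with $Y_{k_j}\in\overline{\mathcal B_R^+(G_{k_j})}$; if $|G_0|(Y_0)>0$ we are done by continuity, otherwise $Y_{k_j}$ can be assumed to lie on $F(G_{k_j})$ and the lower density estimate $\mathcal L_n(\mathcal B_\rho(Y_{k_j})\cap\{|G_{k_j}|>0\})\geq\eps_0\omega_n\rho^n$ together with uniform convergence forces $|G_0|\not\equiv 0$ on every $\mathcal B_\rho(Y_0)$, i.e.\ $Y_0\in \overline{\mathcal B_R^+(G_0)}$. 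The same argument applied to the complement, using the upper density estimate from Corollary \ref{upper.H} applied at points of $F(G_k)$, yields the Hausdorff convergence of $\R^n\setminus\overline{\mathcal B_R^+(G_k)}$. From the Hausdorff convergence and the fact that $F(G_0)$ has zero Lebesgue measure in $\R^n$ (again by the two-sided density estimates applied to $G_0$) one concludes $\chi_{\{|G_k|>0\}}\to\chi_{\{|G_0|>0\}}$ in $L^1(\mathcal B_R)$ by dominated convergence.

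\textbf{Strong $H^1$ convergence.} The standard trick is to compare $G_k$ with a competitor built from $G_0$. For almost every $R'\in(R/2,R)$ the trace $G_k|_{\partial B_{R'}}\to G_0|_{\partial B_{R'}}$ strongly in $H^{1/2}$ (by Fubini and weak $H^1$ convergence). For such a radius, let $V_k\in H^1(B_{R'};\R^m)$ be obtained by gluing $G_0$ on $B_{R'-\delta}$ with the harmonic extension of the boundary datum of $G_k$ on the thin annulus $B_{R'}\setminus B_{R'-\delta}$; the annular correction carries an $H^1$-cost that tends to $0$ as $k\to\infty$ and then $\delta\to 0$. Testing minimality of $G_k$ against $V_k$ gives
\[
\int_{B_{R'}}|\nabla G_k|^2\,dX+\mathcal L_n(\mathcal B_{R'}\cap\{|G_k|>0\})\leq \int_{B_{R'}}|\nabla V_k|^2\,dX+\mathcal L_n(\mathcal B_{R'}\cap\{|V_k|>0\}),
\]
and passing to the limit, exploiting the $L^1$-convergence of the characteristic functions established above, yields $\limsup_k\|\nabla G_k\|_{L^2(B_{R'})}^2\leq \|\nabla G_0\|_{L^2(B_{R'})}^2$. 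Combined with weak lower semicontinuity this gives $\|\nabla G_k\|_{L^2(B_{R'})}\to\|\nabla G_0\|_{L^2(B_{R'})}$, hence strong convergence in $H^1(B_{R'})$; a standard exhaustion of $B_R$ by such $R'$'s completes the argument.

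\textbf{Non-degeneracy of $G_0$ at the origin.} For every $r>0$ and every $k$ large, Proposition \ref{non-deg2} applied to the local minimizer $G_k$ at the free boundary point $0\in F(G_k)$ yields $\sup_{\mathcal B_r}|G_k|\geq c\,r^{1/2}$ with $c>0$ universal. Passing to the limit via uniform convergence on $\overline{\mathcal B_r}$ gives $\sup_{\mathcal B_r}|G_0|\geq c\,r^{1/2}$, as required; incidentally, this also shows $0\in F(G_0)$.

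The main obstacle I anticipate is not any single step, but the joint handling of the second and third items: the Hausdorff convergence of free boundaries is what allows one to pass to the limit in the measure term $\mathcal L_n(\mathcal B_{R'}\cap\{|G_k|>0\})$, and this in turn is what makes the competitor argument for strong $H^1$-convergence go through. All of this pivots on the two-sided density estimates of Section \ref{local}, without which the characteristic functions could concentrate or disperse in the limit.
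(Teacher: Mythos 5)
Your proposal is correct in substance and reaches all four conclusions, but it inverts the logical order of the paper's argument and uses a different competitor. The paper proves the strong $H^1$ convergence and the $L^1$ convergence of the characteristic functions \emph{simultaneously}, by testing the minimality of $G_k$ against the cutoff interpolation $\widetilde G_k=\eta G_0+(1-\eta)G_k$ and exploiting an algebraic cancellation in $|\nabla G_k|^2-|\nabla \widetilde G_k|^2$ together with weak convergence; this yields convergence of the full energy $\int|\nabla G_k|^2+\mathcal L_n(\mathcal B_R^+(G_k))$ to that of $G_0$, from which both items follow by lower semicontinuity of each term separately. The Hausdorff convergence is then deduced \emph{from} the $L^1$ convergence plus the two-sided density estimates of Corollaries \ref{lower.H} and \ref{upper.H}. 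You instead establish the Hausdorff convergence first, directly from uniform convergence, non-degeneracy and the density estimates, derive the $L^1$ convergence from it (using that $F(G_0)$ is Lebesgue-null), and only then run the competitor argument for strong $H^1$ convergence. Both routes are legitimate; yours makes the geometric convergence logically independent of the energy comparison, while the paper's single energy identity is more economical. The one place where your sketch is genuinely looser than the paper is the annular gluing: the assertion that the harmonic correction in $B_{R'}\setminus B_{R'-\delta}$ has vanishing $H^1$-cost is not automatic for a fixed annulus — it requires the strong $H^{1/2}(\partial B_{R'})$ convergence of traces on a Fubini-good sphere (which you invoke, and which holds for a.e.\ $R'$ along a subsequence by compact embedding of $H^1(\partial B_{R'})$ into $H^{1/2}(\partial B_{R'})$), or alternatively a De Giorgi slicing choice of the gluing radius; the paper's cancellation trick sidesteps this entirely by only comparing energies on $\{\eta=1\}$ and absorbing the transition region into the arbitrarily small term $\mathcal L_n(\{0<\eta<1\})$. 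With that point made precise, your argument closes.
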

\begin{proof}
For notational simplicity, we set  $G_k = G_{X_k,r_k}$.
  Since $\abs{G_{k}}$ converges locally uniformly to $\abs{G_0}$, we get
  $$
\chi(\{|G_{0}|>0\}) \leq \liminf_{k\to\infty}  \chi(\{|G_k|>0\})
  $$
 Now, let us prove that $G_k$ converges strongly in $H^{1}_\loc(\R^{n+1};\R^m)$ to $G_0$ and that the characteristic functions $\chi(\{|G_k|>0\})$ converge to $\chi(\{|G_k|>0\})$ in $L^1$. Namely, fixed a radius $R>0$, it is sufficient to prove that
  $$
  \lim_{k\to \infty}\int_{B_R}{\abs{\nabla G_k}^2\mathrm{d}X} + \mathcal{L}_n(\mathcal{B}_r^+(G_k) )=\int_{B_R}{ \abs{\nabla G_0}^2\mathrm{d}X}+\mathcal{L}_n(\mathcal{B}_r^+(G_0)).
  $$
  Consider now $\eta \in C^\infty_c(\R^{n+1}), 0\leq \eta \leq 1$ such that $\eta \equiv 1$ on $B_R$, and the competitor $\widetilde{G}_k \in H^{1}(B_1;\R^m)$ defined by
  $$
  \widetilde{G}_k = \eta G_0 + (1-\eta) G_k.
  $$
  For the sake of notational simplicity, let us set:
$$
\Omega_k = \{\abs{G_k}>0 \}\cap \R^n, \quad \widetilde{\Omega}_k=\{|\widetilde{G}_k|>0 \}\cap \R^n\quad\mbox{and}\quad \Omega_0=\{\abs{G_0}>0 \}\cap \R^n.
$$
Since $\widetilde{G}_k=G_k$ on $\{\eta =0\}$, by the optimality of $G_k$ we get
  \begin{align}\label{minim}
  \begin{aligned}
  \int_{\{\eta >0\}}{\abs{\nabla G_k}^2\mathrm{d}X} &+ \mathcal{L}_n(\Omega_k\cap \{\eta >0\})\leq
  \int_{\{\eta >0\}}{|\nabla \widetilde{G}_k|^2\mathrm{d}X} + |\widetilde{\Omega}_k\cap \{\eta >0\}|\\
  &\leq   \int_{\{\eta >0\}}{ |\nabla \widetilde{G}_k|^2\mathrm{d}X} + \mathcal{L}_n(\Omega_0\cap \{\eta =1\})+\mathcal{L}_n(\{0<\eta<1\}).
  \end{aligned}
  \end{align}
 On $\{\eta >0\}$ we calculate
 \begin{align*}
   %\int_{B_R\cap \{\eta>0\}}{\abs{y}^a \left(  \abs{\nabla U_k}^2 -  |\nabla \widetilde{U}_k|^2\right)} & =
     \abs{\nabla G_k}^2 -  |\nabla \widetilde{G}_k|^2= &\,\,  \abs{\nabla G_k}^2 -  |\eta \nabla G_0 + (1-\eta)\nabla G_k +  (G_0-G_k)\nabla \eta|^2\\
=   &  \,\, (1-(1-\eta)^2)\abs{\nabla G_k}^2 - \eta^2|\nabla G_0|^2 - |G_0-G_k|^2|\nabla \eta|^2+\\
&- 2 (G_0-G_k)\langle \nabla \eta, \eta \nabla G_0 + (1-\eta)\nabla G_k \rangle - 2\eta(1-\eta)\langle \nabla G_0,\nabla G_k\rangle.
 \end{align*}
Since $G_k$ converges strongly in $L^{2}(B_R;\R^m)$ and weakly $H^{1}_\loc(\R^{n+1};\R^m)$ to $G_0$, we can estimate
\begin{align*}
\limsup_{k\to \infty}&\int_{\{\eta>0\}}{\left(  \abs{\nabla G_k}^2 -  |\nabla \widetilde{G}_k|^2\right)\mathrm{d}X}=\\
&= \limsup_{k\to \infty}\int_{ \{\eta>0\}}{ \left( (1-(1-\eta)^2) \abs{\nabla G_k}^2 -  \eta^2|\nabla G_0|^2-2\eta(1-\eta) \langle\nabla G_0,\nabla G_k\rangle\right)\mathrm{d}X}\\
&= \limsup_{k\to \infty}\int_{ \{\eta>0\}}{ (1-(1-\eta)^2) \left(\abs{\nabla G_k}^2 - |\nabla G_0|^2\right)\mathrm{d}X}\\
&\geq  \limsup_{k\to \infty}\int_{\{\eta=1\}}{ \left(\abs{\nabla G_k}^2 - |\nabla G_0|^2\right)\mathrm{d}X},
\end{align*}
where in the last inequality we used that $\abs{\nabla G_k}$ weakly converges in $L^2(\{0<\varphi<1\})$ to $\abs{\nabla G_0}$.

Combining this fact with inequality \eqref{minim}, we obtain
\begin{align*}
\limsup_{k\to \infty}&\left(\int_{\{\eta=1\}}{ \left(\abs{\nabla G_k}^2 - |\nabla G_0|^2\right)\mathrm{d}X} + \mathcal{L}_n(\Omega_k \cap \{ \eta=1\}) -\mathcal{L}_n(\Omega_0 \cap \{ \eta=1\})\right)\\
&\leq
\limsup_{k\to \infty}\left(\int_{\{\eta>0\}}{ \left(\abs{\nabla G_k}^2 - |\nabla \widetilde{G}_k|^2\right)\mathrm{d}X}+ \mathcal{L}_n(\Omega_k \cap \{ \eta=1\}) -\mathcal{L}_n(\Omega_0 \cap \{ \eta=1\})\right)\\
&\leq
\limsup_{k\to \infty}\mathcal{L}_n(\Omega_k \cap \{ \eta=1\}) -\mathcal{L}_n(\Omega_k \cap \{ \eta>0\}) + \mathcal{L}_n(\{0<\eta<1\})\\
&\leq \mathcal{L}_n(\{0<\eta<1\}).
\end{align*}
Finally, since $\eta$ is arbitrary outside $B_R$, the right hand side can be made arbitrarily small, and this implies the desired equality.\\

By Corollary \ref{lower.H} and Corollary \ref{upper.H}, we already know that
\be\label{togh}
\eps_0 \omega_n r^n \leq \mathcal{L}_n(\mathcal{B}_r\cap \{|G_k|>0\})\leq (1-\eps_0)\omega_n r^n, \quad \text{for $r<r_0/r_k$},
\ee
and for every $k>0$. Now, it is well-known that the convergence of the sequence of characteristic functions in the strong topology of $L^1$, together with \eqref{togh}, implies the Hausdorff convergence of $\overline{\Omega_k \cap B_R}$ to $\overline{\Omega_0 \cap B_R}$ locally in $\R^n$. Obviously, the same result holds for the complements  $\Omega_k^c$.\\

Finally, the non-degeneracy of the blow-up limit is a straightforward combination of the uniform convergence and the non-degeneracy condition \eqref{non-deg}. Namely, by Proposition \ref{non-deg2}, for every $k>0$ the rescaled function $G_k$ is non-degenerate in the sense
$$
\text{for every }y \in \overline{\Omega_k}, r\leq \frac{1}{2r_k} \quad \sup_{B_r(y)\cap \R^n}\abs{G_k}\geq c_0 r^{1/2}.
$$
The previous inequality is obtained by applying \eqref{non-deg} in $B_{r_kr}(y)$ for the local minimizer $G$. Finally, by the uniform convergence of $G_k$ and the Hausdorff convergence of $\Omega_k \cap B_r $ in $\R^n$, for every $y\in \overline{\Omega_0}$ we get
$$
\sup_{B_r(y)\cap \R^n}\abs{G_k}\geq c_0 r^{1/2},\,\,\text{for every }r>0.
$$
\end{proof}
The following is a straightforward application of the Weiss monotonicity formula to the blow-up limit.
\begin{cor}\label{weiss.cor}
  Let $G$ be a local minimizer and $X_0 \in F(G)$. Then every blow-up limit $G_0=(g^i_0,\cdots,g^m_0)$ of $G$ at $X_0$ is $1/2$-homogeneous in $\R^{n+1}$, i.e.
  $$
  \left\langle \nabla g^i_0(X), \frac{X}{\abs{X}}\right\rangle =\frac{1}{2\abs{X}} g^i_0(X) \,\text{ in $\R^{n+1}$},
$$
for every $i=1,\cdots,m$. Moreover, the Lebesgue density of $F(G)$ exists finite at every $X_0 \in F(G)$ and it satisfies
\begin{align}\label{density.weiss}
\begin{aligned}
\left\{\abs{G}>0\right\}^{(\gamma)} &= \left\{X_0 \in F(G) \colon \lim_{r \to 0^+} \frac{\mathcal{L}_n(\mathcal{B}_r\cap \{|G_k|>0\})}{\mathcal{L}_n(\mathcal{B}_r)}=\gamma\right\}\\ &= \left\{X_0 \in F(G) \colon W(X_0,G,0^+)=\omega_n \gamma\right\}.
\end{aligned}
\end{align}
\end{cor}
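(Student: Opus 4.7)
\emph{Proof plan.} The argument combines Theorem \ref{weiss} with the compactness of blow-ups from Proposition \ref{compact2}. Fix $X_0 \in F(G)$. The rescaling identity \eqref{rescale.J} translates into $W(X_0, G, r\rho) = W(0, G_{X_0, r}, \rho)$ for all $r, \rho > 0$. Since $r \mapsto W(X_0, G, r)$ is monotone non-decreasing by Theorem \ref{weiss}, and bounded above because the surface term is controlled by the $C^{0,1/2}$ estimate of Proposition \ref{uniform.reg}, the limit $W(X_0, G, 0^+) := \lim_{r \to 0^+} W(X_0, G, r)$ exists and is finite.

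Along any blow-up sequence $G_{X_0, r_k} \to G_0$ provided by Proposition \ref{compact2}, I would pass to the limit term by term in $W(0, G_{X_0, r_k}, \rho)$ for each $\rho > 0$: strong $H^1_{\loc}$ convergence gives convergence of the Dirichlet integral, the $L^1_{\loc}$ convergence of the characteristic functions of the positivity sets gives convergence of the surface-measure term, and the locally uniform convergence handles the spherical trace. Hence
\[
W(0, G_0, \rho) = \lim_{k \to \infty} W(0, G_{X_0, r_k}, \rho) = \lim_{k \to \infty} W(X_0, G, r_k \rho) = W(X_0, G, 0^+) \qquad \text{for every } \rho > 0.
\]
Constancy of $W(0, G_0, \cdot)$ in $\rho$, together with the equality case of \eqref{weiss.form}, forces $\langle \nabla g_0^i, X \rangle = \frac{1}{2} g_0^i$ in $\R^{n+1}$ for each $i$, so $G_0$ is $1/2$-homogeneous.

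For the density formula \eqref{density.weiss}, $1/2$-homogeneity of $G_0$ turns $\{|G_0|>0\} \cap \{x_{n+1}=0\}$ into a cone, so $\mathcal{L}_n(\mathcal{B}_r^+(G_0)) = r^n \mathcal{L}_n(\mathcal{B}_1^+(G_0))$; combined with the $L^1_{\loc}$ convergence of characteristic functions this yields
\[
\lim_{k \to \infty} \frac{\mathcal{L}_n(\mathcal{B}_{r_k}^+(G))}{\omega_n r_k^n} = \frac{\mathcal{L}_n(\mathcal{B}_1^+(G_0))}{\omega_n}.
\]
To identify this value with $W(X_0, G, 0^+)/\omega_n$---and thus confirm the limit is independent of the blow-up subsequence---I would evaluate $W(0, G_0, 1)$ directly and invoke the Pohozaev-type identity $\int_{B_1} |\nabla G_0|^2 \, dX = \frac{1}{2} \int_{\partial B_1} |G_0|^2 \, d\sigma$, derived by testing stationarity of $\mathcal{J}$ against the radial inner variation $\xi(X) = \eta(X) X$, $\eta \in C^\infty_c(B_1)$, and letting $\eta \nearrow \chi_{B_1}$. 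The Dirichlet and trace contributions cancel in the explicit expression for $W(0, G_0, 1)$, leaving $W(X_0, G, 0^+) = \mathcal{L}_n(\mathcal{B}_1^+(G_0))$, which is \eqref{density.weiss}.

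The main obstacle is the Pohozaev identity in the last step. Passing to the limit in $W$ and deducing $1/2$-homogeneity from the rigidity of the monotonicity formula are straightforward given Proposition \ref{compact2}. The subtlety is that $F(G_0)$ is not a priori smooth, so the radial inner variation must be localized; the choice $\xi = \eta X$ is convenient because its $(n+1)$-th component vanishes on $\{x_{n+1}=0\}$, so the associated flow preserves the hyperplane and the interaction of the bulk variation with the measure term on $\mathcal{B}_r$ can be computed cleanly, yielding under $1/2$-homogeneity the desired cancellation.
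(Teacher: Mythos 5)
Your argument matches the paper's in its overall structure: monotonicity plus the scaling identity $W(X_0,G,r\rho)=W(0,G_{X_0,r},\rho)$ plus the strong $H^1$ and $L^1$ compactness of Proposition \ref{compact2} give $W(0,G_0,\rho)\equiv W(X_0,G,0^+)$; constancy plus the rigidity in \eqref{weiss.form} then forces $1/2$-homogeneity; and the density claim reduces to the identity $W(0,G_0,1)=\mathcal{L}_n(\mathcal{B}_1^+(G_0))$. The one place where you diverge is the derivation of the cancellation $\int_{B_1}|\nabla G_0|^2\,\mathrm{d}X=\tfrac12\int_{\partial B_1}|G_0|^2\,\mathrm{d}\sigma$: you propose to get it from the inner (domain) variation with $\xi=\eta X$, i.e.\ a Pohozaev-type stationarity identity. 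That route does work—the choice $\xi=\eta X$ preserves $\{x_{n+1}=0\}$, and after substituting the homogeneity consequences ($|\nabla G_0|^2$ being $(-1)$-homogeneous, $\partial_r g_0^i=\tfrac12 g_0^i$ on $\partial B_1$, and $\mathcal{H}^{n-1}(\partial\mathcal{B}_1^+(G_0))=n\,\mathcal{L}_n(\mathcal{B}_1^+(G_0))$ for the cone) the measure contributions cancel against each other and the Dirichlet contributions against the radial-derivative term, leaving precisely the stated identity—but it is more machinery than needed. The paper's implicit argument is the outer-variation (Euler--Lagrange) one: each $g_0^i$ is harmonic in $B_1^+(G_0)$ with $\Delta g_0^i$ a measure supported on $\{|G_0|=0\}$ where $g_0^i$ vanishes, so $\int_{B_1}|\nabla g_0^i|^2=\int_{\partial B_1}g_0^i\partial_r g_0^i$; homogeneity then gives $\partial_r g_0^i=\tfrac12 g_0^i$ on $\partial B_1$, and summing over $i$ finishes. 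The Pohozaev route buys nothing extra here (it re-derives the same scalar identity with more terms to juggle), whereas the integration-by-parts route avoids entirely the question of justifying the inner variation and the limit $\eta\nearrow\chi_{B_1}$ across the nonsmooth free boundary, which you flag as the main obstacle.
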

\begin{proof}
Let $X_0\in F(G)$ and $G_0$ a blow-up limit of $G$ at $X_0$ associated to a sequence $r_k \searrow 0^+$.
  By Lemma \ref{compact} and Proposition \ref{compact2}, we already know that $G_0$ is a global minimizer of the vectorial Bernoulli problem. On the other hand, by the definition of the Weiss formula, for every $\rho,r>0$ we get
  $$
  W(X_0,G,r\rho) = W(0,G_{X_0,r},\rho).
  $$
  Fixed $\rho>0$, since up to a subsequence $G_{X_0,r_k}\to G_0$ uniformly and strongly in $H^{1}(B_\rho,\R^m)$, we deduce
  $$
  W(0,G_0,\rho) = \lim_{k\to \infty}W(0,G_{X_0,r_k},\rho) =
  \lim_{k\to \infty}W(X_0,G,\rho r_k) =
  \lim_{r \to 0^+}W(X_0,G,r),
  $$
  where the last limit is unique and it does not depend on the sequence $(r_k)_k$ by the monotonicity result Theorem \ref{weiss}. Finally, since $W(0,G_0,\rho)$ is constant we get that the blow-up limit is $1/2$-homogeneous.\\
  Moreover, the homogeneity of the blow-up limits and the strong convergence of the blow-up sequences imply
  \be\label{equiva}
  \mathcal{L}_n(\mathcal{B}_1 \cap \{|G_0|>0\}) = W(0,G_0,1) = \lim_{r\to 0^+}W(X_0,G,r)=\lim_{r\to 0^+}\frac{  \mathcal{L}_n(\mathcal{B}_r(X_0) \cap \{|G|>0\})}{r^n}.
  \ee
  Hence, the density $W(X_0,G,0^+)$ coincides, up to a multiplicative constant, with the Lebesgue density of the free boundary.
\end{proof}
\begin{rem}
By \eqref{equiva}, we note that for every $X_0 \in F(G)$, the measure of the positivity set in $\mathcal B_1$ of the blow-up limit does not depend on the blow-up limit itself.
\end{rem}

\begin{rem}\label{eigen.remark}
  In the classification of the blow-up limits, we will use some results related to
eigenvalues of the Laplace-Beltrami operator
$$
\Delta_{S^n}  = \mbox{div}_{S^n}(\nabla_{S^n} ),
$$
with $\mbox{div}_{S^n}$ and $\nabla_{S^n}$ respectively the tangential divergence and gradient on $S^n$ and $x_{n+1}=r \sin(\theta_n)$. In particular, the following results hold true.

Let $\omega \subset S^{n-1}\times\{0\}$ be an open subset of the $(n-1)$-sphere and let $\Sigma_\omega = \{ r\theta \colon \theta \in \omega, r>0\}\times \{0\}$ be the cone generated by $\omega$ in $\{x_{n+1}=0\}$. Then, $g$ is a $\alpha$-homogeneous solution of
    $$
\begin{cases}
-\Delta g =0 &\emph{in } \R^{n+1}_+\\
\partial_{x_{n+1}}  g = 0& \emph{on } \Sigma_\omega \\
g = 0& \emph{on } \{x_{n+1}=0\}\setminus \Sigma_\omega,
\end{cases}
$$
    if and only if its trace $\varphi = g\lvert_{S^{n}}$ on the sphere satisfies
\be\label{lap-bel}\begin{cases}
-\Delta_{S^n} \varphi = \lambda(\alpha) \varphi  &\emph{in } S^n_+\\
\partial_{\theta_n} \varphi = 0& \emph{on } \omega \\
\varphi = 0& \emph{on } (S^{n-1}\times\{0\})\setminus \omega,
\end{cases}
\ee
with $\lambda(\alpha)=\alpha(\alpha+n-1)$ the characteristic eigenvalue associated to the section $\omega$. Moreover, both the map $\omega\mapsto \alpha(\omega)$ and  $\omega\mapsto \lambda(\alpha(\omega))$ are monotone with respect to the inclusion of spherical sets.

In particular, if  $\alpha<1$, then $\varphi$ cannot change sign and it is indeed a multiple of the principle eigenvalue. Finally, for every spherical set $\omega\subset S^{n-1}$ such that $\mathcal{H}^{n-1}(S) \leq n\omega_n/2$ we have the inequality
    $$
    \lambda_1 \geq \frac12 \left(n-\frac12 \right),
    $$
and the equality is achieved if and only if, up to a rotation, $\omega = S^{n-1} \cap \{x_n>0\}$.\\

The proof of these claims uses the monotonicity of the eigenvalue with respect to the inclusion of spherical set and the P\'{o}lya-Szeg\"{o} inequality for the Schwarz symmetrization applied to the eigenvalue problem \eqref{lap-bel} (see \cite{susste,cones} for further details).

\end{rem}

The following Lemma characterizes the structure of the blow-up limits. In particular, we can prove that the norm of every blow-up limit is a global minimizer of the scalar thin one-phase functional.
\begin{prop}\label{caract}
Let $G$ be a local minimizer and $X_0 \in F(G)$. Then, every blow-up limit $G_0$ is of the form $$G_0(X)=\xi \abs{G_0}(X)\quad\mbox{where}\quad \xi\in\R^m, \abs{\xi}=1$$ and $\abs{G_0}$ is a global minimizer of the scalar thin one-phase functional
\be\label{J}
\mathcal{J}(g,B_R) = \int_{B_R}{\abs{\nabla g}^2\mathrm{d}X} + \mathcal{L}_n(\mathcal{B}_R\cap \{g>0\} ), \quad\mbox{for }R>0.
\ee
Moreover, there exists a dimensional constant $\delta \in (0, 1/2)$ such that one of the following possibilities holds:
\begin{enumerate}
  \item[1.] The Lebesgue density of $\{|G|>0\}$ at $X_0$ is $1/2$ and every blow-up limit $G_0$ is of the form
      \be\label{blow.reg}
      G_0(X)=\xi A U(\langle x,\nu\rangle, x_{n+1})\quad\mbox{where}\quad\xi \in \R^m, \abs{\xi}=1, \nu\in S^{n-1}\times\{0\}
      \ee
      and $A>0$ a specific constant depending only on $n$.
  \item[2.] The Lebesgue density of $\{|G|>0\}$ at $X_0$ satisfies
      \be\label{dens}
      \frac12 +\delta \leq \lim_{r \to 0} \frac{|\mathcal{B}_r(X_0)\cap \{|G|>0\})|}{|\mathcal{B}_r|} \leq 1-\delta,
      \ee
      and $|G_0|$ is a nonnegative global minimizer of \eqref{J} with singularity in zero.
\end{enumerate}
\end{prop}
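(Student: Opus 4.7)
The plan is to prove this in two stages: first reduce the vectorial blow-up to a scalar one by showing all components of $G_0$ are parallel, then invoke the classification of scalar $1/2$-homogeneous global minimizers to obtain the dichotomy.

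For the parallelism step, I will combine homogeneity, harmonicity, and the common vanishing set. By Corollary \ref{weiss.cor}, $G_0$ is a $1/2$-homogeneous global minimizer, and by Lemma \ref{L-a} each component $g^i_0$ is harmonic on the common positivity set $B_1^+(G_0)$. Exploiting the even-in-$x_{n+1}$ symmetry (which, via the reduction recalled in the introduction, imposes a Neumann condition on the trace where $|G_0|>0$), I restrict to $S^n_+$: each $\varphi^i:=g^i_0\lvert_{S^n}$ then solves the Laplace--Beltrami eigenvalue problem \eqref{lap-bel} on the common spherical section $\omega=\{\theta\in S^{n-1}\times\{0\}:|G_0|(\theta)>0\}$ with eigenvalue $\lambda(1/2)$. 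The key is that $1/2<1$, so Remark \ref{eigen.remark} forces $\lambda(1/2)$ to be the principal eigenvalue, whose eigenspace is one-dimensional and spanned by a sign-definite generator $\varphi_*$. Hence each $\varphi^i=c_i\varphi_*$ for constants $c_i\in\R$, and $1/2$-homogeneous extension yields $G_0=c\,v$ for some $c\in\R^m$ and nonnegative scalar $v$. Setting $\xi=c/|c|$ gives $G_0=\xi|G_0|$.

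With this parallel structure I verify that $\mathcal{J}(G_0,\cdot)$ coincides with the scalar functional \eqref{J} applied to $|G_0|$: indeed $|\nabla G_0|^2=|\nabla|G_0||^2$ and the positivity sets match. For any nonnegative scalar competitor $w$ for $|G_0|$ in a ball $B$, the vectorial competitor $W:=\xi w$ agrees with $G_0$ on $\partial B$, and the minimality of $G_0$ translates directly into scalar minimality of $|G_0|$. Therefore $|G_0|$ is a $1/2$-homogeneous global minimizer of the scalar thin one-phase problem, and the classification from \cite{CRS} (together with the half-plane characterization developed there) gives the alternative: either $|G_0|=AU(\langle x,\nu\rangle,x_{n+1})$, in which case $\mathcal{B}_1\cap\{|G_0|>0\}$ is a half-disk and \eqref{density.weiss} gives density $1/2$; or $|G_0|$ carries a genuine singularity at the origin.

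To complete the proof I need the density gap \eqref{dens} in the singular case. The upper bound $1-\delta$ is immediate from Corollary \ref{upper.H} applied to $|G_0|$. For the lower bound $1/2+\delta$, I argue by compactness: if along a sequence of local minimizers there were free-boundary points producing singular blow-ups whose densities tend to $1/2$, then Proposition \ref{compact2} and the Weiss formula (Theorem \ref{weiss}) would yield a $1/2$-homogeneous limiting scalar minimizer with density exactly $\omega_n/2$. The Hausdorff convergence of the zero sets provided by Proposition \ref{compact2} would transfer the singularity to this limit, contradicting the scalar rigidity statement that density $1/2$ singles out the half-plane solutions. I expect this rigidity — that among $1/2$-homogeneous scalar global minimizers only the half-plane solutions attain density $1/2$ — to be the main obstacle; it is the core scalar input behind both the classification in Step 3 and the quantitative gap.
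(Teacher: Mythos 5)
Your reduction of the vectorial blow-up to the scalar problem is essentially the paper's own argument: both use Corollary \ref{weiss.cor} for $1/2$-homogeneity, Lemma \ref{L-a} and Corollary \ref{sharmonic} for the mixed Dirichlet--Neumann eigenvalue problem on the sphere, and then Remark \ref{eigen.remark} with homogeneity $1/2<1$ to force all traces to be multiples of the sign-definite principal eigenfunction, after which the competitor $\xi w$ gives scalar minimality of $|G_0|$. The divergence is in how the final dichotomy is obtained: the paper simply quotes the known scalar classification (\cite[Proposition 5.3]{DS1}), which already contains both the identification of density-$1/2$ cones with $\xi A\,U(\langle x,\nu\rangle,x_{n+1})$ and the quantitative gap $\delta$, whereas you attempt to rederive the gap by compactness. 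Your sketch has the contradiction pointing the wrong way: singularity of the approximating homogeneous minimizers does \emph{not} pass to the Hausdorff limit (the limit may well be the smooth half-plane solution), so you cannot ``transfer the singularity to the limit.'' The standard argument runs in the opposite direction: if homogeneous singular minimizers $v_k$ had densities tending to $\omega_n/2$, their limit would be a half-plane solution by the scalar rigidity, hence the $v_k$ would eventually be $\eps$-flat; the improvement-of-flatness/$\eps$-regularity theorem then forces $F(v_k)$ to be smooth near $0$, and homogeneity upgrades $v_k$ itself to a half-plane solution, contradicting its singularity. Since you explicitly leave the scalar rigidity unproven and your compactness step needs this repair, the cleanest fix is to do what the paper does and cite the scalar result outright; otherwise the vectorial content of your proof is complete and correct.
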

\begin{proof}
 Let $X_0 \in F(G)$ and $G_0$ a blow-up limit of $G$ at $X_0$. By Corollary \ref{weiss.cor} the limit $G_0$ is an $1/2$-homogeneous global minimizer such that $$|\mathcal{B}^+_1(G_0)| = \gamma|\mathcal{B}_1|,$$
for some $\gamma\in (0,1)$ (because of the density estimates).
By Lemma \ref{L-a} and Corollary \ref{sharmonic}, we have that the blow-up limit satisfies
$$
\begin{cases}
  \Delta g^i_0=0 & \mbox{in } \R^{n+1}_+ \\
  \partial_{x_{n+1}} g^i_0 =0 & \mbox{on } \{|G_0|>0\}\cap \R^n \\
  g^i_0=0 & \mbox{on }\R^n\setminus \{|G_0|>0\}.
\end{cases}
$$
Hence, in view of Remark \ref{eigen.remark} all the components are equal up to a multiplicative constant. Moreover by nondegeneracy, $G_0$ cannot be identically zero.

Thus, there exists $\xi \in \R^m$ such that $\abs{\xi}=1$ and $G_0= \xi u$, where $|G_0|=g$ and $g$ is a global minimizer of \eqref{J}. Indeed, for every $R>0$ let $\tilde{g}\in H^{1}_\loc(\R^n)$ be such that $\mbox{supp}(g-\tilde{g})\subseteq B_R$. Then, given the competitor $\widetilde{G}=\xi \tilde{g}$, we easily get that $\mathcal{J}(G_0,B_R)\leq \mathcal{J}(\widetilde{G},B_R)$ is equivalent to
$$
\int_{B_R}{\abs{\nabla g}^2\mathrm{d}X} + \mathcal{L}_n(\mathcal{B}_R\cap \{g>0\} ) \leq \int_{B_R}{\abs{\nabla \tilde{g}}^2\mathrm{d}X} + \mathcal{L}_n(\mathcal{B}_R\cap \{\tilde{g}>0\} ).
$$\\

The desired claims now follow by the known results for the scalar case, see \cite[Proposition 5.3]{DS1}.
\end{proof}

The previous analysis allows to extend the results from the scalar case to our vectorial counterpart. In particular, the problem of the existence of singular global minimizer for \eqref{min} coincides with its scalar counterpart. Indeed, by \cite{DS1} we have
$$
n^* = \inf\{k \in \N\colon \text{there exists an $1/2$-homogeneous global minimizer with singularity in zero}\}\geq 3.
$$
\begin{cor}
  Let $G_0$ be a global minimizer in $\R^{n+1}$ with $n<n^*$. Then $G_0$ is of the form \eqref{blow.reg}.
\end{cor}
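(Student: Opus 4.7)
The plan is to trap the Weiss density between two equal values by performing both a blow-up at the origin and a blow-down at infinity, and then to invoke the rigidity statement of Theorem~\ref{weiss}. Without loss of generality I assume $0 \in F(G_0)$, replacing $G_0$ by a translate otherwise; translation preserves global minimality.

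First, I would carry out a blow-up at the origin. Applying Proposition~\ref{compact2} along a sequence $r_k \searrow 0^+$, there is a subsequential limit $G_*$ which, by Corollary~\ref{weiss.cor}, is a $1/2$-homogeneous global minimizer. Proposition~\ref{caract} then forces $G_* = \xi_* |G_*|$ with $\xi_* \in \R^m$, $|\xi_*| = 1$, and $|G_*|$ a nonnegative $1/2$-homogeneous global minimizer of the scalar thin one-phase functional. Since $n < n^*$, the very definition of $n^*$ excludes case~(2) of Proposition~\ref{caract}, so $|G_*|$ must coincide with the half-plane solution $A\,U(\langle x,\nu_*\rangle,x_{n+1})$ with $A$ the fixed dimensional constant. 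In particular $W(0,G_0,0^+) = W(0,G_*,1) = E_0$, a constant depending only on $n$.

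Next, I would perform a blow-down by setting $G_{0,R}(X) = R^{-1/2} G_0(RX)$ and sending $R \to \infty$. Each $G_{0,R}$ is again a global minimizer with $0 \in F(G_{0,R})$; moreover, the scale invariance of \eqref{holder} together with $0 \in F(G_0)$ yields the global growth bound $|G_0(Y)| \leq C|Y|^{1/2}$, so the rescalings $G_{0,R}$ are locally uniformly bounded. Proposition~\ref{compact2} and Corollary~\ref{weiss.cor} then extract a subsequential $1/2$-homogeneous global minimizer $G_\infty$, and the argument of the previous paragraph applies verbatim to show that $G_\infty$ is again a half-plane solution of energy $E_0$. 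Using the scaling identity $W(0,G_0,R\rho) = W(0,G_{0,R},\rho)$ and the monotonicity from Theorem~\ref{weiss}, I conclude $\lim_{R \to \infty} W(0,G_0,R) = E_0$ as well.

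Since $r \mapsto W(0,G_0,r)$ is nondecreasing on $(0,+\infty)$ with equal limits at the two endpoints, it is constant. The equality case of Theorem~\ref{weiss} then forces $G_0$ to be $1/2$-homogeneous with respect to the origin. A $1/2$-homogeneous global minimizer coincides with its own blow-up at $0$, so Proposition~\ref{caract} (with $n < n^*$ once more ruling out the singular case) produces $G_0(X) = \xi\,A\,U(\langle x,\nu\rangle,x_{n+1})$, as required. I expect the main delicate step to be the blow-down: one must establish the global $1/2$-growth $|G_0(X)| \leq C|X|^{1/2}$ in order to invoke the compactness machinery at arbitrarily large scales, and this rests crucially on the fact that the constant in \eqref{holder} is \emph{universal} and hence survives rescaling; with that in hand, the remainder is a clean application of tools already developed in the previous sections.
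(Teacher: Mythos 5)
Your proof is correct and follows the standard blow-up/blow-down argument based on the Weiss monotonicity formula, which the paper leaves implicit by deferring to the scalar classification in \cite{DS1}. You correctly identify the universality of the constant in \eqref{holder} as the ingredient giving the global $|X|^{1/2}$-growth needed both for the compactness of the blow-down sequence (via Lemma~\ref{compact} and Proposition~\ref{compact2} applied at scales $R\to\infty$) and, together with the Caccioppoli estimate \eqref{caccio}, for the a priori finiteness of $\lim_{R\to\infty}W(0,G_0,R)$ that makes the squeeze between the two endpoint densities possible.
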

Finally, we introduce the notion of regular and singular part of $F(G)$. The rest of the paper will be devoted to analyzing the smoothness of the regular part of the free boundary.
\begin{defn}\label{000}
  Let $X_0 \in F(G)$. We say that
  \begin{itemize}
    \item $X_0$ is a regular point in $\mathrm{Reg}(F(G))$, if the Lebesgue density of $\{|G|>0\}$ at $X_0$ is $1/2$;
    \item $X_0$ is a singular point in $\mathrm{Sing}(F(G))$, if $X_0 \not\in \mathrm{Reg}(F(G))$.
  \end{itemize}
\end{defn}

\section{Viscosity formulation around $\mathrm{Reg}(F(G))$}\label{visco}

In this short section we recall some basic facts about the scalar thin one-phase free boundary problem, and we state the viscosity formulation of the vector valued analogue. We show that local minimizers are indeed viscosity solutions. Hence, the analysis of the regular part of the free boundary can be performed with the viscosity methods of \cite{DR, DT}. However, as pointed out in the introduction, differently from the local case, the reduction from the vector valued  problem to the scalar one is now almost straightforward. For this reason, we start by recalling definitions and basic property for the scalar problem.

\subsection{The scalar problem} In this subsection we collect basic definitions and results for the scalar thin one-phase free boundary problem
\begin{equation}\label{FB}\begin{cases}
\Delta g = 0, \quad \textrm{in $B_1^+(g):= B_1 \setminus \{(x,0) : g(x,0)=0\} ,$}\\
\frac{\p g}{\p t^{1/2}}= 1, \quad \textrm{on $F(g):= \mathcal B_1 \cap \p_{\R^n}\{(x,0) : g(x,0)>0\}$},
\end{cases}\end{equation}
where
\begin{equation}\label{limit}\dfrac{\p g}{\p t^{1/2}}(x_0):=\di\lim_{t \rightarrow 0^+} \frac{g(x_0+t\nu(x_0),0)} {\sqrt t} , \quad \textrm{$X_0=(x_0,0) \in F(g)$},\end{equation}
with $\nu(x_0)$ the unit normal to the free boundary $F(g)$ at $x_0$ pointing toward $\mathcal{B}_1^+(g)$. For further details and proofs, we refer the reader to \cite{CRS,DR, DS1,DS2,DS3}.

First, we state the notion of viscosity solutions to \eqref{FB}, as introduced in \cite{DR}.

\begin{defn}Given $g, v$ continuous, we say that $v$
touches $g$ by below (resp. above) at $X_0 \in B_1$ if $g(X_0)=
v(X_0),$ and
$$g(X) \geq v(X) \quad (\text{resp. $g(X) \leq
v(X)$}) \quad \text{in a neighborhood $O$ of $X_0$.}$$ If
this inequality is strict in $O \setminus \{X_0\}$, we say that
$v$ touches $g$ strictly by below (resp. above).
\end{defn}

\begin{defn}\label{defsub} We say that $v \in C(B_1)$ is a (strict) comparison subsolution to \eqref{FB} if $v$ is a  non-negative function in $B_1$ which is even with respect to $x_{n+1}=0$ and it satisfies
\begin{enumerate} \item $v$ is $C^2$ and $\Delta v \geq 0$ \quad in $B_1^+(v)$;\\
%\item If $X_0 \in F(v)$ is regular from the positive side, i.e. there exists a tangent ball $B$ (in $\R^n$) to $F(v)$ at $X_0$ all contained in  $B_1^+(v)$ then
\item $F(v)$ is $C^2$ and if $x_0 \in F(v)$ we have
$$v (x_0+t\nu(x_0),0) = \alpha(x_0) \sqrt t + o(\sqrt t), \quad \textrm{as $t \rightarrow 0^+,$}$$ with $$\alpha(x_0) \geq 1,$$ where $\nu(x_0)$ denotes the unit normal at $x_0$ to $F(v)$ pointing toward $\mathcal{B}_1^+(v);$\\
\item Either $v$ is not harmonic in $B_1^+(v)$ or $\alpha(x_0) >1$ at all $x_0 \in F(v).$
 %and $r = \sqrt{|(x-x_0) \cdot \nu(x_0)|^2+ s^2}.$
\end{enumerate}
\end{defn}

Similarly one can define a (strict) comparison supersolution.

\begin{defn}\label{scalar}We say that $g$ is a viscosity solution to \eqref{FB} if $g$ is a  continuous non-negative function in $B_1$ which is even with respect to $x_{n+1}=0$ and it satisfies
\begin{enumerate} \item $\Delta g = 0$ \quad in $B_1^+(g)$;\\ \item Any (strict) comparison subsolution (resp. supersolution) cannot touch $g$ by below (resp. by above) at a point $X_0 = (x_0,0)\in F(g). $\end{enumerate}\end{defn}

\subsubsection{The function $\tilde g$} In this subsection we recall the notion of $\eps$-domain variation from \cite{DR}. Via this transformation the problem \eqref{FB} can be ``linearized", as long as an appropriate Harnack type inequality is established. This is the hearth of the strategy developed in \cite{DR} and that we plan to adapt to the vectorial context.

Recall that we denote by $P$ the half-hyperplane $$P:= \{X \in \R^{n+1} : x_n \leq 0, x_{n+1}=0\}$$ and by $$L:= \{X \in \R^{n+1}: x_n=0, x_{n+1}=0\}.$$
 Also,
we call $U(X):=U(x_n,x_{n+1}),$ where $U$ is the function defined in \eqref{Unew}.

Let $g$ be a  continuous non-negative function in $\overline{B}_\rho$. We define the multivalued map $\tilde g$ which associate to each $X \in \R^{n+1} \setminus P$ the set $\tilde g(X) \subset \R$ via the formula
\begin{equation}\label{deftilde} U(X) = g(X - w e_n), \quad \forall w \in \tilde g(X).\end{equation}
%We sometimes call $\tilde g_\eps$  the $\eps$- domain variation associated to $g$.
We write $ \tilde g(X)$ to denote any of the values in this set.

This change of variables has the same role as the partial Hodograph transform for the standard one-phase problem. Our free boundary problem becomes a problem with fixed boundary for $\tilde g$,  and the limiting values of $\tilde g$ on $L$ give the free boundary of $g$ as a graph in the $e_n$ direction.

Recall that if g satisfies  \begin{equation}\label{flattilde}U(X - \eps e_n) \leq g(X) \leq U(X+\eps e_n) \quad \textrm{in $B_\rho,$ for $\eps>0$}\end{equation} then $\tilde g(X) \neq \emptyset$ for $X \in B_{\rho-\eps} \setminus P$  and $|\tilde g(X)| \leq \eps,$ hence  we can associate to $g$ a possibly multi-valued function $\tilde{g}$ defined at least on $B_{\rho-\eps} \setminus P$ and taking values in $[-\eps,\eps]$ which satisfies\begin{equation}\label{til} U(X) = g(X - \tilde g(X) e_n).\end{equation}
 Moreover if $g$ is strictly monotone  in the $e_n$-direction in $B^+_\rho(g)$, then $\tilde{g}$ is single-valued. See \cite[Section 3]{DR} for the basic properties of $\tilde g$.

\subsection{The Vector Valued Case.} We consider now the vector valued thin problem:
\be \begin{cases} \label{VOPnew}
\Delta G =0 & \text{in $B_1^+(|G|)$;}\\
 \frac{\partial}{\partial t^{1/2}} |G|=1 & \text{on $F(G).$}
\end{cases}\ee
Here and henceforth, for notational simplicity we use $B_1^+(G)$ in place of $B^+_1(|G|)$.

\begin{defn}\label{solution} We say that $G=(g^1,\ldots,g^m) \in C(B_1, \R^m)$ is a viscosity solution to \eqref{VOPnew} in $B_1$ if each $g^i$ is even with respect to $\{x_{n+1}=0\}$,
\be \label{visc1}\Delta g^i=0 \quad \text{in $B_1^+(G)$}, \quad \forall i=1,\ldots,m,\ee
and the free boundary condition is satisfied in the following sense. Given $X_0 \in F(G)$, and a continuous
function $\varphi$ in a neighborhood of $X_0$, then
\begin{enumerate}
\item If $\varphi$ is a strict comparison subsolution to \eqref{FB}, then for all unit directions $f$, $\langle G, f\rangle $ cannot be touched by below by $\varphi$ at $X_0.$
\item If $\varphi$ is a strict comparison supersolution to \eqref{FB}, then $\abs{G}$ cannot be touched by above by $\varphi$ at $X_0.$
\end{enumerate}
\end{defn}

\begin{rem}\label{rescale} We remark that if $G$ is a viscosity solution to \eqref{VOPnew} in $B_\lambda$, then $$G_{\lambda}(X) = \lambda^{-1/2} G(\lambda X), \quad X \in B_1$$ is a viscosity solution to \eqref{VOPnew} in $B_1.$
\end{rem}

\begin{rem}\label{sub} Notice that, if $G$ is a viscosity solution to \eqref{VOPnew}, then $|G|$ is a viscosity subsolution to the scalar thin one-phase problem \eqref{FB}. Indeed,
by the free boundary condition in Definition \ref{solution}, we easily deduce the validity of its scalar counterpart in Definition \ref{scalar} (see also Remark \ref{Gsub}).
\end{rem}

In the next proposition we prove that local minimizers are indeed viscosity solutions.

\begin{prop}\label{regolar.visc}
Let $G$ be a local minimizer in $B_1$. Then, up to a scalar multiple, $G$ is a viscosity solution of \eqref{VOPnew} in $B_1$. %(up to a scalar multiple).. in the sense of Definition \ref{solution}.
\end{prop}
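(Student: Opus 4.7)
The strategy is to reduce the vectorial viscosity conditions to their scalar counterparts, exploiting the rigidity of the blow-up established in Proposition \ref{caract}: every blow-up of $G$ at a free boundary point factors as $G_0 = \xi\, |G_0|$ with $\xi \in \R^m$, $|\xi|=1$, and $|G_0|$ a non-negative scalar minimizer of the thin one-phase functional. Harmonicity of each $g^i$ in $B_1^+(G)$ is already provided by Lemma \ref{L-a}, and evenness in $x_{n+1}$ is built in by the reduction carried out in the Notation subsection. Let $A_0>0$ denote the dimensional constant from Proposition \ref{caract}, so that at any regular free boundary point the blow-up equals $\xi\, A_0\, U(\langle x,\nu\rangle,x_{n+1})$; rescaling $G \mapsto A_0^{-1} G$ one may assume $A_0=1$, which is the normalization enforced by Definition \ref{solution} and accounts for the ``up to a scalar multiple'' in the statement.

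For the subsolution test, suppose toward a contradiction that a strict comparison subsolution $\varphi$ touches $\langle G, f\rangle$ from below at $X_0 \in F(G)$, for some unit $f \in \R^m$. Fix $r_k\to 0^+$ and consider the blow-up sequence $G_{X_0,r_k}$ together with the rescalings $\varphi_k(X) := r_k^{-1/2}\varphi(X_0+r_k X)$. By Propositions \ref{compact2} and \ref{caract}, along a subsequence $G_{X_0,r_k}\to G_0 = \xi\, |G_0|$ locally uniformly in $\R^{n+1}$, with $|G_0|$ a scalar thin one-phase minimizer and $0 \in F(|G_0|)$. On the other hand, $\varphi$ is $C^2$ with $C^2$ free boundary and asymptotic slope $\alpha := \alpha(X_0)>1$, so $\varphi_k \to \alpha\, U(\langle x,\nu_\varphi\rangle,x_{n+1})$ locally uniformly, where $\nu_\varphi$ is the inner unit normal to $F(\varphi)$ at $X_0$. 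Passing the inequality $\varphi_k \leq \langle G_{X_0,r_k}, f\rangle$ to the limit yields
\[
\alpha\, U(\langle x,\nu_\varphi\rangle,x_{n+1}) \;\leq\; \langle \xi, f\rangle\, |G_0|(X) \quad \text{for every } X \in \R^{n+1}.
\]
As the left-hand side is non-negative and not identically zero, one has $\langle\xi,f\rangle > 0$, and $|\langle\xi,f\rangle|\leq 1$ then forces $\alpha\, U(\langle x,\nu_\varphi\rangle,x_{n+1}) \leq |G_0|(X)$. Thus the strict scalar subsolution $\alpha U$ touches the scalar thin one-phase minimizer $|G_0|$ from below at the free boundary point $0$, which is impossible by the scalar viscosity result of \cite{CRS, DR}.

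The supersolution test proceeds analogously: if $\varphi$ is a strict comparison supersolution with slope $\beta(X_0)<1$ touching $|G|$ from above at $X_0$, the same blow-up procedure produces $|G_0|(X) \leq \beta\, U(\langle x, \nu_\varphi\rangle, x_{n+1})$, so $\beta U$ is a strict scalar supersolution touching the scalar minimizer $|G_0|$ from above at $0$, again contradicting the scalar viscosity result. The main technical point is the passage of the touching property to the blow-up, which rests on the uniform convergence from Proposition \ref{compact2} together with the one-directional rigidity $G_0 = \xi\, |G_0|$ from Proposition \ref{caract}; once these two ingredients are in place, the reduction to the scalar thin one-phase problem is essentially immediate, unlike in the local case of \cite{DT}, where the presence of a non-trivial branching set forces a genuinely vectorial argument.
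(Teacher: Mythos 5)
Your proof is correct and follows the same blow-up strategy as the paper: rescale so that the dimensional constant $A_0$ equals $1$, invoke Lemma~\ref{L-a} for the interior equations, and at a putative touching point pass to a blow-up limit $G_0 = \xi\,|G_0|$ via Propositions~\ref{compact2} and~\ref{caract}, obtaining $\alpha U \le \langle\xi,f\rangle\,|G_0| \le |G_0|$ (respectively $|G_0|\le \beta U$). The only genuine divergence is in how the final contradiction is reached. You invoke, as a black box, that a scalar minimizer of the thin one-phase functional is a viscosity solution of~\eqref{FB}, and then apply it to the global, $1/2$-homogeneous minimizer $|G_0|$ touched by the half-plane comparison function $\alpha U$ (resp.\ $\beta U$) at $0$. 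The paper instead extracts the contradiction directly: from $\alpha U\le |G_0|$ it deduces $\{|G_0|>0\}\cap\{x_{n+1}=0\}\supset\{x_n>0\}$, then uses the spherical eigenvalue rigidity of Remark~\ref{eigen.remark} (strict monotonicity of $\lambda_1$ with respect to inclusion together with the $1/2$-homogeneity forcing $\lambda_1 = \tfrac12(n-\tfrac12)$) to conclude that $|G_0|=U$ and hence $G_0=\xi U$, after which $\alpha U \le \langle\xi,f\rangle U$ yields $\alpha\le 1$, contradiction. Your route is more modular, but it leans on a scalar result whose availability in the form you need (for global, homogeneous minimizers and with the stated citation to \cite{CRS,DR}) is less clear-cut than you suggest; \cite{CRS} predates the viscosity framework, and this is precisely the $m=1$ instance of the proposition being proved. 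Unpacking that black box leads back to the eigenvalue argument of Remark~\ref{eigen.remark}, so the paper's self-contained derivation is preferable here, though your identification of the reduction to the scalar touching at the blow-up level is exactly the right mechanism.
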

\begin{proof}
Since the constant $A>0$ in \eqref{blow.reg} depends only on the dimension $n$, up to a scalar multiplication it is not restrictive to assume that $A=1$ in Theorem \ref{caract}.\\
By Lemma \ref{L-a} we already know that \eqref{visc1} is satisfied. Hence, let $\varphi$ be a strict comparison subsolution to \eqref{FB}, and suppose by contradiction that there exists a unit direction $f$ in $\R^m$ such that $\langle G,f\rangle$ is touched by below by $\varphi$ at $Y_0 \in F(G)$.\\
Consider now the blow-up sequences centered in the touching point
$$
G_{k}(X) = \frac{1}{r_k^{1/2}}G(Y_0 + r_k X) \quad\mbox{and}\quad \varphi_{k}(X) = \frac{1}{r_k^{1/2}}\varphi(Y_0 + r_k X),
$$
for some sequence of radii $r_k \to 0^+$. Up to a subsequence, they converge respectively to some $G_{0}$ and $\varphi_{0}$ uniformly on every
compact set of $\R^{n+1}$. By Definition \ref{defsub}, we get, up to rotation, that
\be \label{absurd}
\varphi_{0}(X)=\alpha U\left(x_n,x_{n+1}\right)\quad\text{with $\alpha >1$},
\ee
On the other side, by Proposition \ref{caract} the norm $|G_{0}|$ is a $1/2$-homogeneous
global minimizer of the scalar thin one-phase functional \eqref{J} such that $\{|G_0|>0\}\cap \{x_{n+1}=0\}\supset \{x_{n+1}=0,x_{n}> 0\}$. By Remark \ref{eigen.remark}, we deduce that $\{|G_0|=0\}\cap \{x_{n+1}=0\} = P$ and consequently
$$
      G_{0}(X)=\xi U\left(x_n,x_{n+1}\right)\quad\mbox{where}\quad\xi \in \R^m, \abs{\xi}=1.
$$
Hence, we immediately deduce that
$$
\varphi_0(X)= \alpha U(x_n,x_{n+1}) \leq \langle G_0, f \rangle = \langle \xi, f\rangle U(x_n,x_{n+1}),
$$
in contradiction with the hypothesis \eqref{absurd}.\\
On the other hand, let $\varphi$ be a comparison strict supersolution and let us assume that $\abs{G}$ is touched by above by $\varphi$ at some $Y_0 \in F(G)$. By the same blow-up procedure we get, up to rotation, that
$$
\varphi_{0}(X)=\alpha U\left(x_n,x_{n+1}\right)\quad\text{with $\alpha <1$},
$$
and that $|G_{0}|$ is a $1/2$-homogeneous
global minimizer of the scalar thin one-phase functional \eqref{J} such that $\{|G_0|=0\}\cap \{x_{n+1}=0\}\subset P$. As before, we get
$$
|G_{0}|(X)= U\left(x_{n},x_{n+1}\right).
$$
Since $\abs{G_0}\leq \varphi_0$, the absurd follows from the fact that $\alpha<1$.
\end{proof}

\section{Flat free boundaries: The Harnack inequality}\label{harnack.sect}

In this section we develop the basic tools for our analysis of the regular part of the free boundary. In view of Definition \ref{000}, Proposition \ref{regolar.visc}, and non-degeneracy, this boils down to understanding "flat" viscosity solutions defined below.

\begin{defn}
Let $G$ be a viscosity solution to \eqref{VOPnew} in $B_1$. We say that $G$ is $\eps$-flat in the $(f,\nu)$-directions in $B_1$, if for some unit directions $f\in \R^m, \nu \in \R^n$,
\be \label{flat} |G(X) - U(\langle x,\nu\rangle,x_{n+1}) f| \leq  \eps \quad \text{in $B_1$,}
\ee
and \be\label{nondegenra}
 |G|(x,0) \equiv 0 \quad \text{in $B_1 \cap \{\langle x, \nu \rangle  < - \eps\}$}.\ee
\end{defn}
%\sus{Scrivere che le soluzioni sono simmetriche in $y$}.

\subsection{Key lemmas.} Below is the key proposition that allows us to reduce our analysis to the scalar case. As already remarked, this is different from the approach we followed for the local vectorial one-phase problem in \cite{DT} where we did not reduce to the scalar counterpart. In this case such reduction just requires the construction of an appropriate barrier. In the local case a "component-wise" strategy has been used by the authors in \cite{MTV2}, however it required delicate geometric measure theory tools.

\begin{prop}\label{positive1}
  Let $G$ be a viscosity solution to \eqref{VOPnew} in $B_1$. There exists $\eps_0>0$ universal such that, if G is $\eps_0$-flat in the $(f^1, e_n)$-directions in $B_1$, then
  $$
  g^1 >0 \quad\text{in $B_1^+(G)$.}
  $$
\end{prop}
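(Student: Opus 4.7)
I would prove the proposition by a barrier argument combined with the maximum principle, exploiting the harmonicity of $g^1$ in the open connected set $B_1^+(G)$ and its continuous vanishing on the slit $S:=\mathcal B_1\cap\{|G|=0\}$.

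First, I would record the structural consequences of flatness. The hypothesis $|G-U(x_n,x_{n+1})f^1|\leq\eps_0$ yields $g^1\geq U(x_n,x_{n+1})-\eps_0$ together with $|g^i|\leq\eps_0$ for $i\geq 2$ throughout $B_1$. The non-degeneracy assumption $|G|(x,0)\equiv 0$ on $\{x_n<-\eps_0\}$ forces $\{x_n\leq -\eps_0,\,x_{n+1}=0\}\cap \mathcal B_1\subset S$, while the lower bound $g^1\geq U-\eps_0>0$ on $\{x_n>\eps_0^2,\,x_{n+1}=0\}$ confines $S\subset\{x_n\leq \eps_0^2,\,x_{n+1}=0\}$. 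By Definition \ref{solution}, $g^1$ is harmonic in $B_1^+(G)=B_1\setminus S$, even in $x_{n+1}$, and vanishes continuously on $S$.

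Next, I would construct the competitor $w(X):=U(x_n+\eps_0,x_{n+1})-K\sqrt{\eps_0}$, with $K$ a large universal constant. The first slit inclusion above makes $w$ harmonic on $B_1^+(G)$. The $1/2$-H\"older regularity of $U$ gives $U(x_n+\eps_0,x_{n+1})\leq U(x_n,x_{n+1})+K_0\sqrt{\eps_0}$, so for $K\geq K_0+1$ one has $w\leq U-\eps_0\leq g^1$ on $\partial B_1$. Using the second slit inclusion, $w(x,0)\leq \sqrt{(x_n+\eps_0)_+}-K\sqrt{\eps_0}\leq 0=g^1$ on $S$ whenever $K\geq \sqrt{2}$. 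The maximum principle applied to the harmonic difference $g^1-w$ in $B_1^+(G)$ then yields $g^1\geq w$ throughout, giving $g^1>0$ on the bulk set $\{U(x_n+\eps_0,x_{n+1})>K\sqrt{\eps_0}\}$.

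To cover the remaining $O(\eps_0)$-neighborhood of the ``tip'' $(-\eps_0 e_n,0)$ where the barrier degenerates, I would argue by contradiction and compactness. Assume there is a sequence of $\eps_k$-flat viscosity solutions $G_k$ with $\eps_k\to 0$ admitting points $X_k\in B_1^+(G_k)$ with $g^1_k(X_k)\leq 0$; the flatness bound $|g^1_k-U|\leq\eps_k$ forces $U(X_k)\leq\eps_k$, so (along a subsequence) $X_k\to L:=\{x_n=x_{n+1}=0\}\cap\overline B_1$. Rescaling $\tilde G_k(Y):=s_k^{-1/2}G_k(X_k+s_k Y)$ by the distance $s_k$ from $X_k$ to $S$ and invoking a viscosity-solution compactness analogous to Lemma \ref{compact}, the limit is a global viscosity solution whose first component is a positive scalar multiple of $U$ on its positivity set (the transverse components $g^i_k$, $i\geq 2$, vanish in the limit), contradicting $\tilde g^1(0)\leq 0$.

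\textbf{Main obstacle.} The most delicate step is this closing compactness argument near the tip of $S$, where the barrier estimate degenerates. The technical crux is twofold: choosing the scale $s_k$ so as to simultaneously preserve the convergence of the rescaled $\tilde G_k$ and the non-degeneracy of the limit at the origin, and rigorously passing the viscosity free boundary condition of Definition \ref{solution}(1) (that strict subsolutions cannot touch $\langle G,f^1\rangle$ from below at free boundary points) to the limit to exclude degenerate configurations where the limit's first component could still vanish.
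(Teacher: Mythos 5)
Your first barrier step is sound but it does not cover the region where the real work lies, and the argument you propose to close that region has a genuine gap. The set where $w=U(x_n+\eps_0,x_{n+1})-K\sqrt{\eps_0}$ fails to be positive is not an $O(\eps_0)$-neighborhood of the tip $(-\eps_0e_n,0)$: since $U(t,s)^2=\tfrac12(\sqrt{t^2+s^2}+t)\approx s^2/(4|t|)$ for $t<0$, the set $\{U(x_n+\eps_0,\cdot)\leq K\sqrt{\eps_0}\}$ contains a slab of width $\sim\sqrt{\eps_0}$ around the \emph{entire} shifted half-plane $\{x_n\leq-\eps_0,\,x_{n+1}=0\}$. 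Points just off the plate deep inside the zero set (say $(x',-1/2,\delta)$ with $\delta$ tiny) belong to $B_1^+(G)$, are not reached by your barrier, and are exactly where flatness alone only gives $g^1\geq-\eps_0$. Your compactness step is not available to handle them: the proposition is stated for viscosity solutions, and at this stage of the theory there is no compactness theorem for viscosity solutions with convergence to a classified limit (Lemma \ref{compact} and Proposition \ref{caract} are for minimizers). Rescaling by $s_k=\mathrm{dist}(X_k,S)$ destroys the flatness normalization, the limit's non-degeneracy is not known, and concluding that the limit's first component is a positive multiple of $U$ essentially presupposes the positivity you are trying to prove — so the closing argument is both unavailable and circular. (Also, $U(X_k)\leq\eps_k$ forces $X_k$ to approach the half-plane $P$, not the edge $L$.)

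The paper avoids all of this with a different barrier: the strictly subharmonic paraboloid $\Phi_{x_0}(X)=\delta_0(\Lambda x_{n+1}^2-|x-x_0|^2)$, $\Lambda>n$. Its decisive feature is that $\Phi_{x_0}\leq 0$ on the whole hyperplane $\{x_{n+1}=0\}$, so the comparison $g^1\geq\Phi_{x_0}$ on the zero plate is automatic wherever the free boundary happens to sit; on $\partial B_1$ one splits into the cone $\{\Lambda'x_{n+1}^2>|x|^2\}$, where $g^1\geq c_0$ and $\delta_0$ is chosen so $\Phi_{x_0}\leq c_0$, and its complement, where $\Phi_{x_0}\leq-m_0\delta_0\leq-\eps_0\leq g^1$ for $\eps_0$ small. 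The maximum principle then gives $g^1(x_0,x_{n+1})\geq\delta_0\Lambda x_{n+1}^2>0$ on the vertical segment through $x_0$, and sweeping $x_0$ over $\mathcal B_1$ yields $g^1>0$ in all of $B_1\setminus\{x_{n+1}=0\}$ in one stroke; continuity (and the minimum principle on the plate) finishes. If you want to keep a barrier argument, you should replace your translated-$U$ competitor by a family of barriers that are nonpositive on the whole plate and positive transversally to it, as the paper does; otherwise the degenerate region cannot be closed by the tools available here.
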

\begin{proof}
Let $\eps_0>0$ to be chosen later, by the flatness assumption \eqref{flat} we deduce that
  $$
  U(X)-\eps_0 \leq g^1(X) \leq U(X) + \eps_0 \quad\text{in $B_1$}.
  $$
  For $\delta_0>0$ to be made precise later, $x_0 \in \mathcal B_1$, set $\Phi_{x_0}(X)=\delta_0(\Lambda x_{n+1}^2 - \abs{x-x_0}^2)$ with $\Lambda>0$ universal such that $\Delta \Phi >0$ in $B_1$. We aim to show that $g^1\geq \Phi_0$ on $\partial B_1^+(G)$, which implies by the comparison principle that $g^1>0$ on the $x_{n+1}$-axis minus the origin. Hence, by comparing $g^1$ with $\Phi_{x_0}$ and varying $x_0$ in $\mathcal{B}_1$ we get that $g^1>0$ in $B_1 \setminus \{x_{n+1}=0\}$. Our claim then follows by continuity.

  Clearly, on the set $\{|G|\equiv 0\}\cap \{x_{n+1}=0\}$ we have $g^1=0\geq \Phi_0$. On the other hand on $\partial B_1 \setminus (\{|G|\equiv 0\} \cap \{x_{n+1}=0\})$ we argue as follows: given $\Lambda'>\Lambda$ let
  $$
  C= \{(x,x_{n+1})\in \R^{n+1}\colon \Lambda'x_{n+1}^2- |x|^2 >0\}\supset \{\Phi>0\}
  $$
  be a slighter larger cone in $\R^{n+1}$. Since $g^1\geq U -\eps_0$, there exists an universal constant $c_0>0$ such that
  $$
  \begin{cases}
  g^1\geq c_0>0 &\text{on $\partial B_1 \cap C$}\\
    g^1\geq -\eps_0  &\text{on $\partial B_1 \setminus C$}
  \end{cases}.
  $$
  Finally, fixed
  $$
  M_0 = \max_{\partial B_1 \cap C} \Phi, \quad m_0 = \max_{\partial B_1 \setminus C} \abs{\Phi},
  $$
  let us choose $\delta_0 >0$ so that
  $$
  \delta_0 M_0 \leq c_0 \quad \text{on $\partial B_1 \cap C$}.
  $$
  Then, up to choose $\eps_0>0$ small enough, we get
  $$
  -m_0\delta_0 \leq -\eps_0 \quad\text{on $\partial B_1 \setminus C$}.
  $$ Thus, $g^1 \geq \Phi_0$ on $\p B_1^+(G)$ as desired.
\end{proof}

The following lemma allows to translate the flatness assumption
on the vector-valued function $G$ into the property that one of its components is trapped between nearby translation of a one-plane solution, while the remaining ones are small.
\begin{lem}\label{translate}
Let $G$ be a viscosity solution to \eqref{VOPnew} in $B_1$. There exists $\eps_0>0$ universal such that, if G is $\eps_0$-flat in the $(f^1, e_n)$-directions in $B_1$, then\begin{enumerate}
\item for $i=2,\ldots,m, $ \be |g^i| \leq C \eps_0 U(X+\eps_0 e_n) \quad \text{in $B_{1/2};$}\ee
\item \be U(X-C\eps_0 e_n) \leq g^1 \leq |G| \leq U(X+C\eps_0 e_n) \quad \text{in $B_{1/2}$},\ee
\end{enumerate}
with $C>0$ universal.
\end{lem}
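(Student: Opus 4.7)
Read componentwise, the flatness hypothesis \eqref{flat} gives $|g^1 - U(x_n,x_{n+1})| \le \eps_0$ and $|g^i| \le \eps_0$ for $i\ge 2$ throughout $B_1$. Combining these via $|G|^2 = \sum_i(g^i)^2$ yields $|G| \le U + C\eps_0$ in $B_1$, with $C=C(m)$. Proposition \ref{positive1} gives $g^1\ge 0$ (in particular $g^1\le |G|$), and \eqref{nondegenra} gives $\{|G|=0\}\cap\{x_{n+1}=0\}\supseteq S := \{x_n\le -\eps_0,\,x_{n+1}=0\}$.

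\textbf{Part (2).} The plan is to prove the two outer inequalities by a sliding-barrier argument in the spirit of \cite{DR}. For the upper bound, fix a small $t>0$ and consider $\varphi^s_t(X) := (1+t)U(x_n+s,x_{n+1})$: for every $s>0$, $\varphi^s_t$ is a strict comparison supersolution to \eqref{FB} (its free-boundary constant is $\alpha = 1+t>1$). Start the sliding from $s$ very large, where $\varphi^s_t > |G|$ strictly on $\overline{B_{1/2}}$, and decrease $s$. First contact of $\varphi^s_t$ and $|G|$ is excluded in the interior by the strong maximum principle, since $|G|$ is subharmonic in $B_1^+(G)$ by Remark \ref{Gsub} and $\varphi^s_t$ is harmonic in its positivity set; contact at $F(G)$ is excluded by Definition \ref{solution}(2), using that $|G|$ is a viscosity subsolution of \eqref{FB} by Remark \ref{sub}. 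One then verifies that the sliding can reach $s = C\eps_0$ for $C$ universal without losing strict separation on $\partial B_{1/2}$, which reduces to the pointwise inequality $(1+t)U(X + C\eps_0 e_n) > U(X) + C'\eps_0$ on $\partial B_{1/2}$. This is a direct computation from the explicit form $U=r^{1/2}\cos(\theta/2)$, handled by a case split according to whether $x_{n+1}=0$ (where $|G|$ vanishes on $S$ automatically) or $x_{n+1}\ne 0$. Letting $t\downarrow 0$ yields $|G| \le U(X + C\eps_0 e_n)$. The lower bound $g^1 \ge U(X - C\eps_0 e_n)$ follows by a dual argument: $(1-t)U(X - s e_n)$ is a strict comparison subsolution, and sliding it upward from below against $g^1$ invokes Definition \ref{solution}(1) with the unit direction $f=f^1$ (interior contact ruled out by the strong maximum principle for the harmonic function $g^1$), giving the result.

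\textbf{Part (1).} For each $i\ge 2$, both $g^i_+$ and $g^i_-$ are nonnegative, subharmonic in $B_1$ and harmonic in $B_1^+(G)$ (Lemma \ref{L-a}), and vanish on $\{|G|=0\}\cap\{x_{n+1}=0\}\supseteq S$; meanwhile $V(X):=U(X+\eps_0 e_n)$ is positive and harmonic on $B_1\setminus S$ and vanishes on $S$. The plan is to apply the boundary Harnack principle on the slit domain $B_1\setminus S$, a standard tool in this thin one-phase setting (cf.\ \cite{DR,DS1}), to obtain $g^i_\pm(X)/V(X)\le C\, g^i_\pm(X_*)/V(X_*)$ in $B_{1/2}$, where $X_*\in B_{3/4}$ is a fixed interior reference point with $V(X_*)\ge c>0$. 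Since $\sup_{B_{3/4}}|g^i| \le \eps_0$, summing over the two signs gives $|g^i(X)| \le C\eps_0\, V(X) = C\eps_0\,U(X+\eps_0 e_n)$ on $B_{1/2}$.

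\textbf{Main obstacle.} The most delicate step is the initialization and closure of the sliding barrier in Part (2): the two free boundaries are only $O(\eps_0)$-apart, so one must simultaneously maintain strict boundary separation on $\partial B_{1/2}$ throughout the sliding and handle the region near the singular line $L$, where $\partial_{x_n}U$ degenerates along the negative $x_n$-axis (there flatness alone would give only $|G|\le U+C\eps_0$, a bound weaker than what is claimed, so the viscosity formulation is genuinely needed). The subtlety in Part (1) is that $|g^i|$ vanishes on a set potentially strictly larger than $S$; this is circumvented by splitting $g^i = g^i_+ - g^i_-$ and treating each sign as a nonnegative function vanishing on $S$ to which boundary Harnack for the slit domain applies.
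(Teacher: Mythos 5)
Your proposal takes a genuinely different route from the paper for Part~(2), and a similar-in-spirit but incomplete route for Part~(1). There are, however, genuine errors.

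\textbf{Part (2).} The paper simply invokes \cite[Lemma~5.3]{DR}, a boundary-Harnack/comparison lemma whose hypotheses are: $g^1$ nonnegative, even, harmonic in $B_1^+(g^1)$, $U-\eps_0 \le g^1 \le U+\eps_0$, and $\{x_n\le -\eps_0\}\subset\{g^1=0\}\subset\{x_n\le\eps_0\}$ on the plate (the last containment following from $g^1\ge U-\eps_0$). No viscosity is needed; the same one-sided argument then handles the subharmonic $|G|$. You instead try to reprove the bound by sliding viscosity barriers. Apart from being more involved (you still must handle contact on $\{|G|=0\}\setminus F(G)$, where the viscosity definition gives no information, and justify the boundary inequality near $L$), your choice of barriers is wrong: by Definition~\ref{defsub}, a strict comparison \emph{subsolution} must have $\alpha\ge 1$ on its free boundary (strictly, $\alpha>1$), and a strict \emph{supersolution} must have $\alpha<1$. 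Thus $(1+t)U(\cdot+se_n)$ with $t>0$ has $\alpha=1+t>1$ and is a strict \emph{sub}solution, not a supersolution; and $(1-t)U(\cdot-se_n)$ has $\alpha=1-t<1$ and is a strict \emph{super}solution, not a subsolution. You have the two barriers exactly swapped, so as written the free-boundary contact cases in your sliding argument invoke the wrong clause of Definition~\ref{solution} and do not produce a contradiction. The correct pairing for a viscosity proof would be $(1-t)U(\cdot+se_n)$ for the upper bound (touching $|G|$ from above as a strict supersolution) and $(1+t)U(\cdot-se_n)$ for the lower bound (touching $g^1=\langle G,f^1\rangle$ from below as a strict subsolution).

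\textbf{Part (1).} The idea is the same as the paper's, but there is a gap: you apply boundary Harnack in the slit domain $B_1\setminus S$ directly to $g^i_\pm$ against $V=U(\cdot+\eps_0 e_n)$. However, $g^i_\pm$ is only \emph{subharmonic} in $B_1\setminus S$; it is harmonic only on the smaller set $B_1^+(G)\subsetneq B_1\setminus S$, because $\{|G|=0\}\cap\{x_{n+1}=0\}$ is in general strictly larger than $S$. Boundary Harnack requires harmonicity of the quantity in the numerator. The paper avoids this by introducing the harmonic majorant $v$ (harmonic in $B_1\setminus S$ with boundary data $\eps_0$ on $\partial B_1$ and $0$ on the slit), comparing $|g^i|\le v$ by subharmonicity, and applying boundary Harnack to the pair $(v, U(\cdot+\eps_0 e_n))$. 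Inserting that harmonic majorization step would repair your argument, but as stated the boundary-Harnack application is not justified.
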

\begin{proof}
For the bound $(i),$ let $v$ be the harmonic function in $B_1 \setminus \{X \in \mathcal{B}_1\colon x_n <-\eps_0\}$ such that
$$
v=\eps_0 \quad\text{on $\partial B_1$}, \quad v=0 \quad\text{on $\{X \in \mathcal{B}_1\colon x_n \leq -\eps_0\}$}.
$$
Since $\abs{g^i}$ is subharmonic in $B_1$ and it satisfies
$$
\abs{g^i}\leq \eps_0, \quad g^i \equiv 0 \quad\text{on $\{X \in \mathcal{B}_1 \colon x_n \leq -\eps_0\}$},
$$
by comparison principle $\abs{g^i}\leq v$ in $B_1$. Then by the boundary Harnack inequality, say for $\bar{X} = \frac12 e_n$, we deduce
$$
v(X)\leq \bar{C}\frac{v(\bar{X})}{U(\bar{X}+\eps_0 e_n)}U(X+\eps_0 e_n) \leq C \eps_0 U(X+\eps_0 e_n) \quad\text{in $B_{1/2}$},
$$
with $C>0$ universal.\\ For the bounds in $(ii),$ let $\eps_0>0$ be as in  Proposition \ref{positive1}. Then, $g^1$ is strictly positive and harmonic in $B_1^+(g_1)$ and it satisfies
$$
U(X)-\eps_0 \leq g^1(X) \leq U(X)+\eps_0 \quad\text{in $B_1$},
$$
and (for $\eps_0$) possibly smaller,
$$
\{X \in \mathcal{B}_1 \colon x_n\leq -\eps_0  \}\subset
\{X \in \mathcal{B}_1 \colon g^1=0 \}
\subset
\{X \in \mathcal{B}_1 \colon x_n\leq \eps_0  \}
$$
Thus, by \cite{DR}[Lemma 5.3.], there exists $C>0$ universal such that
$$
U(X-C\eps_0 e_n) \leq g^1(X) \leq U(X+ C\eps_0 e_n)\quad\text{in $B_{1/2}$}.
$$
According to the proof of \cite{DR}[Lemma 5.3.], since the norm $|G|$ is subharmonic in $B_1$, and $|G| =0$ on $\{x_n \leq -\eps_0\}$, the claimed bound for $|G|$ also follows. Details are omitted as they apply verbatim.\end{proof}

\subsection{Harnack Inequality}
In this subsection we state and prove a Harnack type inequality which is crucial for our method. As in the local case (see \cite[Lemma 2.4]{DT}), in the proof we use the observation that $|G|$ is a subsolution for the scalar one phase problem in $B_1$. The key difference here is that we also have that $g^1>0$, which means that the strategy of the scalar case applies straightforwardly in this context. Most details are omitted as the results of \cite{DR} can be applied directly, after observing that in their proofs it is enough for the function to be either a subsolution or a supersolution of \eqref{FB} (depending on the desired bound), or simply a positive harmonic function away from its zero set on the plate $\{x_{n+1}=0\}$.

\begin{thm}\label{harnack}
There exists a universal constant $\overline{\eps}>0$ such that, if $G$ solves \eqref{VOPnew} in $B_1$ and
\be\label{flat_2} U(X+\eps a_0 e_n) \leq g^1 \leq |G| \leq U(X+\eps b_0 e_n) \quad \text{in $B_r(X_0) \subset B_1$,}
\ee  with $$\eps(b_0 -a_0) \leq  \bar\eps r,$$
and
\be \label{smalli}|g^i|\leq r^{1/2}\left( \frac{b_0-a_0}{r}\eps \right)^{5/8}\quad \text{in $B_{1/2}(X_0)$, \quad i=2,\ldots, m,}\ee then
\be\label{flat_2harnack} U(X+\eps a_1 e_n)\leq g^1 \leq |G| \leq U(X+\eps b_1 e_n) \quad \text{ in $B_{\eta r}(X_0)$,}
\ee
with $$a_0 \leq a_1 \leq b_1 \leq b_0, \quad  b_1 - a_1 = (1-\eta)(b_0-a_0) ,$$ for a small universal constant $\eta>0$.
 \end{thm}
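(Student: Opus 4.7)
The plan is to reduce the vector-valued Harnack inequality to two scalar applications of \cite{DR}[Theorem 5.1.]: one for $g^1$ (which, thanks to Proposition \ref{positive1}, behaves as a positive harmonic function in $B_r^+(G)$ vanishing on $F(G)$) and one for $|G|$ (which, by Remark \ref{sub}, is a viscosity subsolution of the scalar thin one-phase problem \eqref{FB}). The hypothesis \eqref{smalli} is what lets us pass information between $g^1$ and $|G|$.

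After rescaling via $G_\lambda$ (Remark \ref{rescale}) we may normalize to $r\sim 1$, $X_0=0$, so that \eqref{flat_2} together with Lemma \ref{translate} places us in the setting of Proposition \ref{positive1}; hence $g^1>0$ in $B_r^+(G)$. Fix the comparison point $\bar X = (r/6) e_{n+1}$. Since $U$ is non-degenerate there, $U(\bar X+\eps b_0 e_n)-U(\bar X+\eps a_0 e_n)\sim \eps(b_0-a_0) r^{-1/2}$. Consider the dichotomy:
\begin{itemize}
\item[(A)] $g^1(\bar X)\geq U\!\left(\bar X+\eps\!\left(a_0+\tfrac12(b_0-a_0)\right)e_n\right)$, or
\item[(B)] the reverse inequality holds, so that $U(\bar X+\eps b_0 e_n)-g^1(\bar X)\geq c\,\eps(b_0-a_0) r^{-1/2}$.
\end{itemize}

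In Case (A), $g^1$ is a positive harmonic function in $B_r^+(G)$, vanishing on $F(G)\supset F(U(\,\cdot+\eps a_0 e_n))$. The propagation mechanism of \cite{DR}[Theorem 5.1.] — which only requires that the function be a positive harmonic function in its positivity set on the plate — then upgrades the pointwise gap at $\bar X$ to the bound $g^1(X)\geq U(X+\eps a_1 e_n)$ in $B_{\eta r}$, with $a_1=a_0+\eta(b_0-a_0)$, $\eta>0$ universal. In Case (B), we transfer the gap from $g^1$ to $|G|$ using \eqref{smalli}: writing $\tau:=\eps(b_0-a_0)/r\leq \bar\eps$,
\[
|G|(\bar X)-g^1(\bar X)=\frac{\sum_{i\geq 2}(g^i)^2}{|G|+g^1}(\bar X)\leq C\,r^{1/2}\tau^{5/4}=C\tau^{1/4}\cdot \eps(b_0-a_0)r^{-1/2},
\]
which is negligible compared to $c\,\eps(b_0-a_0)r^{-1/2}$ for $\bar\eps$ small. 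Hence $U(\bar X+\eps b_0 e_n)-|G|(\bar X)\geq (c/2)\eps(b_0-a_0)r^{-1/2}$. Since $|G|$ is a scalar viscosity subsolution of \eqref{FB}, the upper-bound half of the scalar Harnack from \cite{DR} applies directly and yields $|G|(X)\leq U(X+\eps b_1 e_n)$ in $B_{\eta r}$, with $b_1=b_0-\eta(b_0-a_0)$. Combining the two cases produces the claimed improved trapping $U(X+\eps a_1 e_n)\leq g^1\leq |G|\leq U(X+\eps b_1 e_n)$ with $b_1-a_1=(1-\eta)(b_0-a_0)$.

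The main (and essentially only genuinely vectorial) obstacle is the quantitative step in Case (B): verifying that the exponent $5/8$ in \eqref{smalli} is sharp enough to make $|G|-g^1$ at the interior point $\bar X$ absorbable into half of the gap $c\eps(b_0-a_0)r^{-1/2}$ available from $g^1$, uniformly in $(\eps,b_0-a_0,r)$ subject to $\tau\leq\bar\eps$. Once this threshold computation is done, the remaining arguments are a verbatim application of the corresponding steps in \cite{DR}, applied once to the positive harmonic function $g^1$ and once to the subsolution $|G|$, so no further detail is needed.
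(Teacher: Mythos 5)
Your proposal follows essentially the same route as the paper: a dichotomy at an interior comparison point, the lower--bound improvement obtained by treating $g^1$ as a positive harmonic function (supersolution) in $B_r^+(G)$ vanishing on $\{|G|=0\}\cap\{x_{n+1}=0\}$, the upper--bound improvement obtained by treating $|G|$ as a viscosity subsolution of \eqref{FB}, and the transfer of the gap from $g^1$ to $|G|$ via $|G|-g^1=\sum_{i\geq 2}(g^i)^2/(|G|+g^1)\lesssim r^{1/2}\tau^{5/4}$ with $\tau=\eps(b_0-a_0)/r\leq\bar\eps$, which is exactly the role the exponent $5/8$ in \eqref{smalli} plays in the paper's Lemma \ref{fbharnack} (there one finds $|G|(\overline X)\leq g^1(\overline X)+C\eps^{5/4}$ absorbed into half of the gap $c\eps$). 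So the genuinely vectorial content of your argument is the correct one.

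The one point you skip over is the position of $B_r(X_0)$: the hypothesis only requires $B_r(X_0)\subset B_1$, so $X_0$ need not be normalized to a free boundary point, and the free--boundary propagation lemma of \cite{DR} (your ``Case (A)/(B)'' mechanism) only applies when $B_r(X_0)$ is close to the strip $\{x_n=-\eps,\,x_{n+1}=0\}$. The paper treats the complementary case separately: there one uses the interior Harnack inequality for the harmonic function $g^1$ (giving \eqref{flat_2harnack} for $g^1$ directly), but since $|G|$ is only subharmonic away from its zero set, its upper bound cannot come from interior Harnack and is instead recovered by absorbing the tail components, $|G|\leq U(X+\eps b_1e_n)+C\eps^{5/8}U(X+\eps e_n)\leq U(X+\bar b_1\eps e_n)$, as in Lemma \ref{translate}(i). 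This missing case is routine but does require that extra absorption step for $|G|$, so you should add it.
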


The following key corollary is immediately obtained. Here $\tilde{g}^1_\eps$ and $\widetilde{|G_\eps|}$ are the $\eps$-domain variations associated to $g^1$ and $|G|$ respectively and
$$
a_\eps := \left\{(X,\tilde{g}^1_\eps(X)) \colon X \in B_{1-\eps}\setminus P\right\}\quad\text{and}\quad A_\eps := \left\{(X,\widetilde{|G_\eps|}(X)) \colon X \in B_{1-\eps}\setminus P\right\}.
$$
Since domain variations may be multivalued, we mean that given $X$ all pairs $(X, \tilde g^1_\eps(X))$ belong to
$a_\eps$ for all possible values of $\tilde g^1_\eps(X)$, and similarly for $A_\eps.$

\begin{cor}\label{AA}
There exists a universal constant $\overline{\eps}>0$ such that, if $G$ solves \eqref{VOPnew} in $B_1$,
$$
U(X-\eps e_n) \leq g^1 \leq |G| \leq U(X+\eps e_n) \quad \text{in $B_1$,}
$$
and
$$|g^i|\leq \eps^{3/4}\quad \text{in $B_{1/2}$, \quad i=2,\ldots, m,}
$$
with $\eps \leq \bar{\eps}/2$ and $m_0>0$ such that ($C$ universal)
\be\label{m0}
4\eps (1-\eta)^{m_0}\eta^{-m_0} \leq \overline{\eps}, \quad \eps \leq C (1-\eta)^{5 m_0},
\ee
then the sets $a_\eps \cap (B_{1/2}\times [-1,1])$ and $A_\eps \cap (B_{1/2}\times [-1,1])$ are trapped above the graph of a function $y=a_\eps(X)$ and below the graph of a function $y=b_\eps(X)$ with
$$
b_\eps - a_\eps \leq 2(1-\eta)^{m_0-1},
$$
where $a_\eps,b_\eps$ have modulus of continuity bounded by the H\"{o}lder function $\alpha t^\beta$, with $\alpha, \beta$ depending only on $\eta$.
\end{cor}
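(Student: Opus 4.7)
The plan is to iterate Theorem \ref{harnack} at each point $X_0 \in B_{1/2}$ along the dyadic scales $r_k = \eta^k$, producing a nested family of trapping strips for $g^1$ (and $|G|$) whose widths decay geometrically. Starting from the hypothesis $U(X-\eps e_n) \leq g^1(X) \leq |G|(X) \leq U(X+\eps e_n)$ in $B_1$, that is $(a_0,b_0) = (-1,1)$, I would inductively construct a sequence $a_0 \leq a_1 \leq \cdots \leq a_k \leq b_k \leq \cdots \leq b_1 \leq b_0$ with $b_k - a_k = 2(1-\eta)^k$ and
$$U(X+\eps a_k e_n) \leq g^1(X) \leq |G|(X) \leq U(X+\eps b_k e_n) \quad \text{in } B_{\eta^k}(X_0),$$
stopping at $k = m_0 - 1$.

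To carry out the inductive step via Theorem \ref{harnack}, I need to verify its two hypotheses in $B_{\eta^k}(X_0)$. The flatness bound $\eps(b_k - a_k) \leq \bar\eps \eta^k$ becomes $2\eps(1-\eta)^k \eta^{-k} \leq \bar\eps$, which is the first inequality in \eqref{m0} evaluated at $k \leq m_0 - 1$. For the smallness assumption \eqref{smalli}, a direct computation gives the required upper bound
$$\eta^{k/2}\bigl((b_k-a_k)\eps\eta^{-k}\bigr)^{5/8} = 2^{5/8}(1-\eta)^{5k/8}\eps^{5/8}\eta^{-k/8},$$
which is at least $\eps^{3/4}$ as soon as $\eps \leq 2^5(1-\eta)^{5k}\eta^{-k}$; for $k \leq m_0 - 1$ this is guaranteed by the second inequality in \eqref{m0}. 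Setting $a_\eps(X_0) := a_{m_0-1}$ and $b_\eps(X_0) := b_{m_0-1}$, the trapping reads $U(X+\eps a_\eps(X_0) e_n) \leq g^1(X), |G|(X) \leq U(X+\eps b_\eps(X_0) e_n)$ in a neighborhood of $X_0$. Via the definition \eqref{deftilde} and the monotonicity of $U$ in $e_n$ this converts into $a_\eps(X_0) \leq \tilde g^1_\eps(X), \widetilde{|G_\eps|}(X) \leq b_\eps(X_0)$ for all branches at $X$ near $X_0$, so that $a_\eps \cap (B_{1/2}\times[-1,1])$ and $A_\eps \cap (B_{1/2}\times[-1,1])$ lie between the graphs of $a_\eps$ and $b_\eps$ with $b_\eps - a_\eps \leq 2(1-\eta)^{m_0-1}$.

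The H\"older regularity of $a_\eps$ and $b_\eps$ then follows from the standard oscillation-decay argument: given $X, Y \in B_{1/2}$ pick the largest $k$ with $\eta^k \geq |X-Y|$; the trapping strips constructed at $X$ and $Y$ at scale $\eta^k$ must intersect (since both contain the common value of the domain variation at any point in $B_{\eta^k}(X) \cap B_{\eta^k}(Y)$) and they have common width $2(1-\eta)^k$, so $|a_\eps(X) - a_\eps(Y)|$ and $|b_\eps(X) - b_\eps(Y)|$ are both bounded by $2(1-\eta)^k \leq C|X-Y|^\beta$ with $\beta = \log(1-\eta)/\log\eta$. The main obstacle I anticipate is simply the careful bookkeeping of the rescaled smallness condition \eqref{smalli} at every stage of the iteration, which is precisely what the quantitative choice \eqref{m0} of $m_0$ is designed to accommodate.
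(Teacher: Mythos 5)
Your proof is correct and takes essentially the same route as the paper: iterate Theorem \ref{harnack} over dyadic scales $\eta^k$, using the two inequalities in \eqref{m0} to keep the flatness hypothesis and the smallness condition \eqref{smalli} valid at each step, then pass to the $\eps$-domain variations to obtain the trapping of $a_\eps$ and $A_\eps$. The only difference is that you spell out the application of Harnack at arbitrary centers $X_0\in B_{1/2}$ and the resulting oscillation-decay/H\"older argument for $a_\eps,b_\eps$, which the paper leaves implicit (deferring to \cite{DR}); your bookkeeping of the rescaled smallness bound and of the constant in \eqref{m0} is accurate.
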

 Indeed, we can apply repeatedly the Harnack inequality for $m=0, \ldots, m_0$ (the second inequality in \eqref{m0} guarantees that \eqref{smalli} is preserved), and obtain
 \be\label{serve}
 U(X+\eps a_m e_n) \leq g^1 \leq |G| \leq U(X+\eps b_m e_n)\quad \text{in $B_{\eta^m}$}
\ee
with $b_m-a_m = 2(1-\eta)^m$. Thus, by the properties of the $\eps$-domain variations (see \cite[Lemma 3.1]{DR}) we get
$$
a_m \leq \tilde{g}^1_\eps \leq \widetilde{|G_\eps|} \leq b_m\quad \text{in $B_{\eta^m - \eps},$}
$$
and
\begin{align*}
a_\eps \cap (B_{\eta^m - \eps} \times [-1,1]) &\subset B_{\eta^m - \eps} \times [a_m,b_m],\\
A_\eps \cap (B_{\eta^m - \eps} \times [-1,1]) &\subset B_{ \eta^m - \eps} \times [a_m,b_m],
\end{align*}
for $m=0, \ldots, m_0.$

\medskip

 We are left with the proof of the Harnack inequality, that follows easily from the next lemma.
\begin{lem}\label{fbharnack}
There exists $\eps_0>0$ universal such that  if $G$ is a solution to \eqref{VOP} in $B_1$ such that for $0<\eps\leq \eps_0$,
$$
U(X)\leq g^1(X) \leq |G|(X) \quad\text{in $B_{1/2},$}
$$
and at $\overline{X} \in B_{1/8}(\frac14 e_n)$ we have $
U(\overline{X} + \eps e_n)\leq g^1(\overline{X}),
$
then $$
U(X+\tau \eps e_n)\leq g^1(X) \leq |G|(X)\quad \text{in $B_\delta$},
$$
for universal constants $\tau,\delta >0$. Similarly, if
$$
g^1(X)\leq |G|(X) \leq U(X)\quad\text{in $B_{1/2}$}
$$
and
\be\label{vismall}|g^i|\leq \eps^{5/8} \quad \text{in $B_{1/2}$}, \quad i=2,\ldots, m,\ee
then if $g^1(\overline{X})\leq U(\overline{X} - \eps e_n),$
we get $$g^1(X)\leq |G|(X) \leq U(X-\tau \eps e_n)\quad\text{ in $B_\delta.$}$$
\end{lem}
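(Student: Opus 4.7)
The plan is to reduce each statement to the scalar Harnack inequality \cite[Lemma~5.2]{DR} by exploiting the following two complementary properties of a viscosity solution $G$ to \eqref{VOPnew}: under the flatness built into the hypotheses, Proposition~\ref{positive1} gives $g^1 > 0$ in $B_1^+(G)$, hence $F(g^1) = F(G)$ and Definition~\ref{solution} with $f = f^1$ shows that $g^1$ is a viscosity supersolution of the scalar problem \eqref{FB}; on the other hand, the norm $|G|$ is a viscosity subsolution of \eqref{FB} by Remark~\ref{sub}. Part 1 uses the first property, Part 2 the second.

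For Part 1, the point $\overline{X} \in B_{1/8}(\tfrac{1}{4}e_n)$ lies at a fixed distance from the plate $\{x_{n+1}=0\}$, so $g^1 - U$ is nonnegative and harmonic in a neighborhood of $\overline{X}$. A first-order expansion of $U$ (noting $\partial_{x_n} U \sim 1$ at $\overline{X}$) gives
\[
(g^1 - U)(\overline{X}) \;\geq\; U(\overline{X} + \eps e_n) - U(\overline{X}) \;\geq\; c_0 \eps,
\]
and the classical interior Harnack inequality propagates this to $g^1 - U \geq c_0'\eps$ on a small ball $B_\rho(\overline{X})$. From here the scalar argument \cite[Lemma~5.2]{DR} transplants verbatim: one builds a strict comparison subsolution $\Psi = U + c_0'\eps\,w$, for an explicit nonnegative function $w$ harmonic in $B_1\setminus P$, satisfying $\Psi \leq g^1$ on the relevant boundary; the supersolution property of $g^1$ at $F(g^1)=F(G)$ and the maximum principle for harmonic functions in $B_\delta\setminus\{x_{n+1}=0\}$ rule out interior touching, yielding $g^1 \geq \Psi$ in $B_\delta$. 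A Taylor expansion then gives $\Psi \geq U(X + \tau\eps e_n)$ in $B_\delta$ for a universal $\tau>0$, which is the desired conclusion.

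For Part 2, the strategy is to first transfer the one-point bound at $\overline{X}$ from $g^1$ to $|G|$ using the smallness assumption on the transverse components, and then apply the scalar Harnack inequality to $|G|$ as a subsolution. From the hypothesis $|g^i|\leq \eps^{5/8}$ for $i=2,\ldots,m$,
\[
|G|(\overline{X})^2 \;=\; g^1(\overline{X})^2 + \sum_{i=2}^m g^i(\overline{X})^2 \;\leq\; g^1(\overline{X})^2 + (m-1)\eps^{5/4};
\]
since $\overline{X}$ lies at a fixed distance from $\partial\{U>0\}$, we have $U(\overline{X} - \eps e_n) \geq c_1 > 0$ for $\eps$ small, so combined with $g^1(\overline{X}) \leq U(\overline{X} - \eps e_n)$ the previous inequality yields $|G|(\overline{X}) \leq U(\overline{X} - \eps e_n) + C\eps^{5/4}$. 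Using $\partial_{x_n} U \sim 1$ in a neighborhood of $\overline{X} - \eps e_n$, the right-hand side is in turn bounded by $U(\overline{X} - \eps^* e_n)$ for $\eps^* := \eps - C'\eps^{5/4} \geq \eps/2$, valid when $\eps$ is small. We then apply the scalar Harnack inequality \cite[Lemma~5.2]{DR} to $|G|$ (viewed as a subsolution of \eqref{FB}) with data $|G| \leq U$ in $B_{1/2}$ and $|G|(\overline{X}) \leq U(\overline{X} - \eps^* e_n)$, obtaining $|G|(X) \leq U(X - \tau\eps^* e_n) \leq U(X - (\tau/2)\eps e_n)$ in $B_\delta$; together with $g^1 \leq |G|$ this completes the proof.

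The main obstacle is the absorption of the transverse components in Part 2: these do not solve \eqref{FB}, and $|G|$ only dominates $g^1$, so the one-point bound on $g^1$ must be lifted to $|G|$ at the cost of the error $\sum_{i\geq 2}(g^i)^2 \leq (m-1)\eps^{5/4}$. The exponent $5/8$ is chosen precisely so that $\eps^{5/4} = o(\eps)$ as $\eps \to 0^+$, ensuring that this error corresponds to a spatial shift strictly smaller than $\eps$ and can be absorbed without losing the Harnack improvement. With this calibration, the comparison and barrier steps of \cite[Lemma~5.2]{DR} go through unchanged, realizing the ``almost straightforward'' reduction to the scalar case mentioned in the introduction.
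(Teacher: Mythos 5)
Your proposal is correct and follows essentially the same route as the paper: in Part 1 one uses that $g^1$ is a scalar viscosity supersolution and applies the scalar Harnack lemma of \cite{DR} directly, while in Part 2 one first transfers the one-point bound from $g^1$ to $|G|$ via $|G|^2=(g^1)^2+\sum_{i\geq 2}(g^i)^2\leq (g^1)^2+C\eps^{5/4}$ (the exponent $5/8$ ensuring the resulting shift is $o(\eps)$ and hence absorbable) and then applies the same scalar lemma to $|G|$ as a subsolution. The only cosmetic discrepancy is the reference number for the scalar Harnack lemma in \cite{DR}, and the fact that you reconstruct the barrier argument in Part 1 while the paper simply cites it.
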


\begin{proof}[Proof of Lemma \ref{fbharnack}] The first statement follows immediately from the fact that $g^1$ is a supersolution to \eqref{FB} hence we can apply \cite[Lemma 6.3]{DR}.

  Now, let us consider the case
  $$
g^1(X)\leq |G|(X) \leq U(X) \quad\text{in $B_{1/2}$}
$$
and
$$
|g^i|\leq \eps^{5/8} \quad \text{in $B_{1/2}$}, \quad i=2,\ldots, m.
$$ Since $|G|$ is a subsolution, in order to apply again in \cite[Lemma 6.3]{DR}, we need to check that $$|G|(\bar X) \leq U(\bar X - c\eps e_n)$$ for some $c>0$ universal.
Since $g^1(\overline{X})\leq U(\overline{X} - \eps e_n)$, we get \be\label{dr}
    g^1(\overline{X})-U(\overline{X})\leq   U(\overline{X}-\eps e_n)-U(\overline{X}) = -\partial_t U(\overline{X}-\lambda e_n)\eps \leq -c\eps,\, \lambda\in (0,\eps)
  \ee
  and
  $$
    |G|(\overline{X})-U(\overline{X})\leq g^1(\overline{X}) + C\eps^{5/4} -U(\overline{X})\leq -\frac{c}{2}\eps.
  $$ The desired bound follows arguing as in \eqref{dr}.
      \end{proof}
  We are now ready to sketch the proof of  Theorem \ref{harnack}.
  \begin{proof}[Proof of Theorem \ref{harnack}]
  Without loss of generality, let us assume $a_0=-1$ and $b_0 = 1$. Also, up to rescaling, we can take $r=1$ (hence $2\eps \leq \overline{\eps}$). Moreover, we denote with $\eps_0$ and $\delta$ the universal constants in Lemma \ref{fbharnack}, and choose $\bar \eps=\eps_0$.\\
    We distinguish two cases depending on the position of $B_r(X_0)$.\\

    {\it Case 1.} If $\mathrm{dist}(X_0,\{x_n=-\eps, x_{n+1}=0\})\leq \delta/2$ we aim to apply Lemma \ref{fbharnack}.
    Assume that for $\overline{X}=1/4 e_n$ (the other case is analogous to the scalar counterpart \cite[Theorem 6.1]{DR})
    $$
    g^1(\overline{X})\leq U(\overline{X}).
    $$
Since,

     $$
    g^1 \leq |G| \leq U(X+\eps e_n) \quad \text{in $B_{1/2}(-\eps e_n) \subset B_1(X_0)$,}
    $$
 and  for $\eps$ small enough, it holds $\overline{X} \in B_{1/8}((-\eps + 1/4)e_n)$, by \eqref{smalli} we can apply Lemma \ref{fbharnack} and conclude that
    $$
    g^1 \leq |G| \leq U(X+(1-\eta)\eps e_n) \quad \text{in $B_{\delta}(-\eps e_n)$.}
    $$
    Finally, the improvement follows by choosing $\eta < \delta/2$, which implies that $B_\eta(X_0) \subset B_\delta(-\eps e_n)$.\\

    {\it Case 2.}   If $\mathrm{dist}(X_0,\{x_n=-\eps, x_{n+1}=0\}) > \delta/2$, then we can apply directly \cite[Theorem 6.1]{DR}, as in this case we only use that $g^1$ is a positive harmonic function in $B_1^+(g^1)$, thus the conclusion
    \be\label{flat_2harnack2} U(X+\eps a_1 e_n)\leq g^1 \leq U(X+\eps b_1 e_n) \quad \text{ in $B_{\eta}(X_0)$,}\ee does hold for $\eta$ small.
    On the other hand, reasoning as in Lemma \ref{translate}-(i) we have in the same ball,
    $$|G| \leq U(X+\eps b_1 e_n) + C \eps^{5/8}U(X+\eps e_n) \leq U(X+\bar b_1\eps e_n),$$
    and our claim is proved.
      \end{proof}

   \section{The improvement of flatness lemma}\label{final}

In this section we prove our main lemma, from which the $C^{1,\alpha}$ regularity of a flat free boundary follows by standard arguments (see for example \cite{DT}). In view of Lemma \ref{translate} the flatness can be expressed as in \eqref{flat1}-\eqref{non_d1}.

\begin{lem}\label{IMPF}[Improvement of flatness] Let $G$ be a viscosity solution to \eqref{VOPnew} in $B_1$ with $0 \in F(G)$, satisfying
\be\label{flat1}  U(X-\eps e_n) \leq g^1 \leq |G| \leq U(X+\eps e_n)  \quad\text{in $B_1$},
\ee
and
\be\label{non_d1} |G-g^1f^1| \leq \eps^{3/4} \quad \text{in $B_1$}.\ee If  $0<\rho \leq \rho_0$ for a universal $\rho_0>0$ and $0<\eps \leq \eps_0$ for some $\eps_0$ depending on $\rho$, then for unit vectors $\nu \in \R^n$ and $\bar f^1 \in\R^m$,
\be\label{flat_imp}
 U\left(\langle x, \nu\rangle -\frac{\eps}{2}\rho, x_{n+1}\right) \leq G \cdot \bar f^1 \leq |G| \leq U\left(\langle x, \nu\rangle +\frac{\eps}{2}\rho, x_{n+1}\right)\quad\text{in $B_\rho$},
\ee
and
\be\label{non_dimpr} |G - (G \cdot \bar f^1)\bar f^1| \leq \left(\frac \eps 2\right)^{3/4} \rho^{1/2} \quad \text{in $B_\rho$},\ee
with $|\nu -e_n|, |f^1-\bar f^1 |\leq C\eps, i=1,\ldots,m$, for a universal constant $C>0.$ %, |\bar f-f^1| \leq C \eps$,
\end{lem}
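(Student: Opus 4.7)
I would argue by contradiction and compactness, in the spirit of \cite{DR, DT}, reducing the vectorial improvement of flatness to its scalar analogue plus an additional compactness argument for the non-principal components. Suppose the conclusion fails for some $\rho \leq \rho_0$ to be fixed: there exist sequences $\eps_k \downarrow 0$ and viscosity solutions $G_k=(g^1_k,\ldots,g^m_k)$ to \eqref{VOPnew} in $B_1$ satisfying \eqref{flat1}-\eqref{non_d1} with $\eps=\eps_k$, but for which neither \eqref{flat_imp} nor \eqref{non_dimpr} holds at scale $\rho$ for any admissible pair $(\nu,\bar f^1)$. By Proposition \ref{positive1} (since \eqref{flat1} implies $\eps_k$-flatness in the $(f^1,e_n)$ directions) we have $g^1_k > 0$ in $B_1^+(G_k)$, so the $\eps_k$-domain variation $\tilde g^1_{k}$ is well-defined via \eqref{til} and is single-valued where $g^1_k$ is monotone in $e_n$.

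\textbf{Step 1 (Compactness and linearization for $g^1_k$).} Applying Corollary \ref{AA} to $g^1_k$ (using Lemma \ref{translate} to check \eqref{smalli}), the graphs of $\tilde g^1_{k}$ and $\widetilde{|G_k|}$ are trapped in a strip of width $o(1)$ about an equicontinuous, uniformly bounded function in $B_{1/2}\setminus P$. Passing to a subsequence, both graphs converge in the Hausdorff sense to the graph of the same H\"older function $\tilde w:B_{1/2}\setminus P\to[-1,1]$; the identification is forced by the sandwich $g^1_k \leq |G_k| \leq U(X+\eps_k b e_n)$. Standard viscosity arguments (transporting $C^2$ test functions via \eqref{deftilde}, using that $g^1_k$ is harmonic in $B_1^+(G_k)$ and that $|G_k|$ is a subsolution to \eqref{FB}, cf. Remark \ref{sub}) identify $\tilde w$ as a viscosity solution of the linearized problem of \cite[Section 7]{DR},
\be\label{linearized}
\begin{cases} \Delta \tilde w = 0 & \text{in } B_{1/2}\setminus P,\\ \tfrac{\partial}{\partial x_n^{1/2}}\tilde w = 0 & \text{on } L.\end{cases}
\ee
Since $0\in F(G_k)$, one checks $\tilde w(0)=0$; then \cite[Lemma 7.6]{DR} provides the $C^{1,\alpha}$ expansion
$$
\bigl|\tilde w(X) - \langle x,\nu_0\rangle\bigr| \leq C\,|X|^{1+\alpha} \quad\text{in }B_{\rho_0},
$$
for some $\nu_0\in\R^{n-1}\times\{0\}\times\{0\}$ tangent to $L$ and universal $\rho_0,\alpha,C$.

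\textbf{Step 2 (Compactness and linearization for the remaining components).} For $i=2,\ldots,m$, set $v^i_k := g^i_k/\eps_k$. By Lemma \ref{translate}(i) we have $|v^i_k(X)|\leq C\,U(X+\eps_k e_n)$ on $B_{1/2}$, so the $v^i_k$ are uniformly bounded on every compact subset of $B_{1/2}\setminus P$; by even reflection across $\{x_{n+1}=0\}$ and harmonicity in $B_1^+(G_k)$, standard interior estimates give a subsequential uniform limit $\tilde v^i$ on compact subsets of $B_{1/2}\setminus P$. Combining the Hausdorff convergence of $\{|G_k|=0\}\cap\{x_{n+1}=0\}$ to $P$ with the bound $|g^i_k|\leq\eps_k^{3/4}$ from \eqref{non_d1}, the limit $\tilde v^i$ is harmonic in $B_{1/2}\setminus P$, even in $x_{n+1}$, vanishes continuously on $P$, and satisfies $|\tilde v^i|\leq C\,U$. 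The eigenvalue analysis of Remark \ref{eigen.remark} (leading eigenvalue $\alpha=1/2$ on the slit spherical cap $S^n\setminus P$) together with separation of variables on the edge $L$ yields the expansion
$$
\bigl|\tilde v^i(X) - c_i U(X)\bigr| \leq C\,|X|^{3/2-\delta}\quad\text{in }B_{\rho_0},
$$
for some $c_i\in\R$ and small universal $\delta>0$, since the next mode decays like $r^{3/2}$ (or like a linear $x'$-translate of $U$, which is of the same order we absorb into the rotation in Step 3).

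\textbf{Step 3 (Constructing the new frame and obtaining the contradiction).} Define
$$
\nu := \frac{e_n + \eps_k \nu_0}{|e_n+\eps_k\nu_0|},\qquad \bar f^1 := \frac{f^1 + \eps_k\sum_{i\geq 2}c_i f^i}{\bigl|f^1 + \eps_k\sum_{i\geq 2}c_i f^i\bigr|},
$$
so that $|\nu-e_n|,|\bar f^1 - f^1|\leq C\eps_k$. Unwinding \eqref{deftilde} and rescaling, the Step 1 expansion translates, for $\rho$ small and $k$ large, into
$$
U\!\left(\langle x,\nu\rangle - \tfrac{\eps_k}{2}\rho,\,x_{n+1}\right) \leq g^1_k(X) \leq |G_k|(X) \leq U\!\left(\langle x,\nu\rangle + \tfrac{\eps_k}{2}\rho,\,x_{n+1}\right) \quad\text{in }B_\rho;
$$
combining with the Step 2 expansion of each $v^i_k$ and the choice of $\bar f^1$ gives the same sandwich with $g^1_k$ replaced by $G_k\cdot \bar f^1$, plus the transversal bound
$$
|G_k - (G_k\cdot \bar f^1)\bar f^1| \leq C\eps_k\,\rho^{3/2-\delta} \leq \bigl(\tfrac{\eps_k}{2}\bigr)^{3/4}\rho^{1/2}
$$
for $\rho$ below a universal threshold (absorbing $C\rho^{1-\delta}\leq 2^{-3/4}\rho^{-1/4}$, and using $\eps_k\leq\eps_k^{3/4}$). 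This contradicts the standing assumption that \eqref{flat_imp}--\eqref{non_dimpr} fails.

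\textbf{Main obstacle.} The delicate point is Step 2: one needs the expansion of $\tilde v^i$ at the edge $L$ with the correct leading term $c_i U$ (rather than just H\"older control), so that the rotated unit vector $\bar f^1$ produces the sharp factor $\rho^{1/2}$ in \eqref{non_dimpr} (not just $\rho$). This requires exploiting the Dirichlet condition on $P$, evenness across $\{x_{n+1}=0\}$, and the fact that the principal angular eigenfunction on $S^n\setminus P$ is exactly $U|_{S^n}$, with a spectral gap to the next mode. The rest of the proof is a relatively standard adaptation of the scalar flatness argument of \cite{DR}, made possible here by the strict positivity of $g^1$ from Proposition \ref{positive1}.
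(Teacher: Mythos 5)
Your overall strategy is the one the paper follows: contradiction plus compactness, linearization of the $\eps$-domain variations of $g^1$ and $|G|$ to the scalar linearized problem of \cite{DR}, a separate compactness argument for the transversal components, and a rotation of the frame in $\R^m$ to absorb their leading mode $c_i U$. Step 1 and the skeleton of Step 3 match the paper's proof. However, Step 2 contains a genuine error in the normalization. The hypothesis \eqref{non_d1} only gives $|g^i_k|\leq \eps_k^{3/4}$ for $i\geq 2$, and this is the sharp size of the transversal components in the iteration (it is exactly what \eqref{non_dimpr} propagates). Hence $v^i_k:=g^i_k/\eps_k$ is of size $\eps_k^{-1/4}\to\infty$ and is \emph{not} uniformly bounded; your claimed bound $|v^i_k|\leq C\,U(X+\eps_k e_n)$ does not follow from Lemma \ref{translate}(i), whose proof requires $|g^i|\leq \eps_0$ on $\partial B_1$ with $\eps_0$ the \emph{same} parameter measuring the flatness of $g^1$ --- here the transversal bound is $\eps_k^{3/4}$, not $\eps_k$. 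The correct normalization (and the one the paper uses) is $g^i_k/\eps_k^{3/4}\to g^i_*$, with $g^i_*$ harmonic in $B_{1/2}\setminus P$, vanishing on $P$, and expanding as $M_iU+O(\rho\, U)$ near the edge; correspondingly the rotated vector must be $\xi^1_k=f^1+\eps_k^{3/4}\sum_{i\neq 1}M_i f^i$, not $f^1+\eps_k\sum c_i f^i$. With your choice the rotation is too small by a factor $\eps_k^{1/4}$ and cannot remove the leading $U$-mode of the transversal components, so the estimate $|G_k-(G_k\cdot\bar f^1)\bar f^1|\leq C\eps_k\rho^{3/2-\delta}$ is unjustified (the left-hand side is generically of order $\eps_k^{3/4}\rho^{1/2}$ with a constant that your argument does not improve). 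The final absorption ``using $\eps_k\leq\eps_k^{3/4}$'' masks this: once the normalization is corrected it is unnecessary, and as written it is covering the quarter-power that was lost.

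Two smaller points. First, in Step 3 the lower bound $U(\langle x,\nu\rangle-\tfrac{\eps_k}{2}\rho,x_{n+1})\leq G_k\cdot\bar f^1$ does not follow by merely ``combining'' the two expansions: since $\bar f^1\neq f^1$, one only gets $G_k\cdot\bar f^1\geq (U(\langle x,\nu\rangle-\tfrac{\eps_k}{4}\rho,x_{n+1})-C\eps_k^{3/2})^+$ pointwise, and an extra barrier/boundary Harnack comparison (together with Proposition \ref{positive1} to guarantee positivity of $G_k\cdot\bar f^1$ in $B^+_{1/2}(G_k)$) is needed to reabsorb the additive error $C\eps_k^{3/2}$ into a multiplicative one; the paper carries this out explicitly. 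Second, you must also verify that the quadratic correction $(G_k\cdot\xi^1_k)\,\eps_k^{3/4}\sum_j M_j f^j$ appearing when you project onto the new frame is of order $\eps_k^{3/2}$, hence negligible; this is where the denominator $|\xi^1_k|=1+O(\eps_k^{3/2})$ enters. These are fixable with the paper's arguments, but the normalization in Step 2 must be corrected first.
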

\begin{proof}
Following the strategies of \cite{DT, DR}, we proceed by contradiction. Once the argument reduces to the scalar case, we omit the details and refer the reader to the corresponding steps in the proof of \cite[Theorem 7.2]{DR}.\\

{\it Step 1 - Compactness and linearization.} Fix $\rho \leq \rho_0$ to be made precise later. Let us suppose there exist $\eps_k \to 0$ and a sequence of solutions $(G_k)_k$ of \eqref{VOPnew} such that $0 \in F(G_k)$ and \eqref{flat1} and \eqref{non_d1} are satisfied for every $k$,
\be\label{contrad1}
U(X-\eps_k e_n) \leq g^1_k \leq |G_k| \leq U(X+\eps_k e_n) \quad \text{in $B_{1}$,}
\ee
and
\be\label{contrad2}
|G_k - g^1_kf^1| \leq  \eps_k^{3/4} \quad \text{in $B_{1},$}\ee but  either of the conclusions \eqref{flat_imp} or \eqref{non_dimpr} does not hold.
 Let $\tilde{g}^1_k$ and $\widetilde{|G|_k}$ be the $\eps_k$-domain variations of $g^1_k$ and $|G_k|$ respectively.
In view of \eqref{contrad1}-\eqref{contrad2}, we can apply Corollary \ref{AA} and Ascoli-Arzel\`{a} to conclude that, up to a subsequence, the sets
$$
a_k := \left\{(X,\tilde{g}^1_k(X)) \colon X \in B_{1-\eps_k}\setminus P\right\}\quad\text{and}\quad A_k := \left\{(X,\widetilde{|G_k|}(X)) \colon X \in B_{1-\eps_k}\setminus P\right\},
$$
converge uniformly, with respect to the Hausdorff distance, in $B_{1/2}\setminus P$ to the graphs
$$
a_{\infty} := \left\{(X,\tilde{g}^1_{\infty}(X)) \colon X \in B_{1/2}\setminus P\right\}\quad\text{and}\quad A_{\infty} := \left\{(X,\widetilde{|G_{\infty}|}(X)) \colon X \in B_{1/2}\setminus P\right\},
$$
with $\tilde{g}^1_{\infty}$ and $\widetilde{|G_{\infty}|}$ H\"{o}lder continuous functions in $B_{1/2}$. Moreover,
\be\label{equal}\widetilde{|G_\infty|} \equiv \tilde{g}_\infty^1 \quad \text{in $B_{1/2}.$}\ee

Since $g^1_k$ is a sequence of supersolutions to the scalar thin one-phase problem \eqref{FB}, while $|G_k|$ is a sequence of subsolutions to the same problem, we conclude by the arguments in Step 2 of \cite[Theorem 7.1]{DR} that $\widetilde{|G_\infty|} \equiv \tilde{g}_\infty^1$ satisfies (in the viscosity sense) the linearized problem
\be\begin{cases}\label{linearized}
 \Delta(U_n w)=0 & \text{in $B_1\setminus P,$}\\
|\nabla_r w|=0 &\text{on $B_1\cap L$},
\end{cases}\ee
where we recall that $$
|\nabla_r w|(X_0)= \lim_{(x_n,x_{n+1})\to (0,0)} \frac{w(x_0',x_n,z)-w(x_0',0,0)}{r},\quad r=|(x_n,x_{n+1})|.
$$  In particular, since $(\tilde{g}^1_k)_k$ and $(\widetilde{|{G}_k|})_k$ are uniformly bounded in $B_1$, we get a uniform bound on $\tilde g^1_\infty \equiv \widetilde{|{G}_\infty|}$,
hence by \cite[Lemma 4.2]{DR}, since $\tilde{g}^1_\infty(0)=0,$ we deduce that for $C_0$ universal,
\be\label{old}
     \abs{\tilde{g}^1_\infty(X) -\langle \xi', x'\rangle}\leq C_0\rho^{3/2} \quad\text{in $B_{2\rho}$},
\ee
  for some vector $\xi' \in \R^{n-1}$. Details are omitted as we reduced to the scalar case, hence the arguments of \cite[Theorem 7]{DR} apply verbatim.

\smallskip

{\it Step 2 - Improvement of flatness.} In view of \eqref{old},
for $\rho< 1/(8C_0)$ small enough, we get
  $$
   \langle \xi', x'\rangle - \frac18\rho\leq \tilde{g}^1_\infty(X) \leq \langle \xi', x'\rangle +\frac18\rho \quad\text{in $B_{2\rho}$}
  $$
  and, for $k$ sufficiently large, we deduce from the uniform convergence of $a_k$ to $a_\infty$ and of $A_k$ to $A_\infty$ that
\be \label{k}
   \langle \xi', x'\rangle - \frac14\rho\leq \tilde{g}^1_k(X) \leq \widetilde{|G_k|}(X)\leq \langle \xi', x'\rangle +\frac14\rho \quad\text{in $B_{2\rho} \setminus P$}.
   \ee
The argument of Step 2 in \cite[Theorem 7.1]{DR} then gives (again details are omitted):
\be\label{flat_imp2}
 U\left(\langle x, \nu\rangle -\frac{\eps_k}{4}\rho, x_{n+1}\right) \leq g^1_k \leq |G_k| \leq U\left(\langle x, \nu\rangle +\frac{\eps_k}{4}\rho, x_{n+1}\right)\quad\text{in $B_{\frac 3 2 \rho}$},
\ee
for a unit vector $\nu$ with $|\nu-e_n| \leq C\eps_k.$

On the other hand, by \eqref{contrad1}-\eqref{contrad2} we conclude that, up to a subsequence, $g^i_k/\eps_k^{3/4} \to g^i_*$ uniformly, with $g^i_*$ harmonic in $B_{1/2} \setminus P$ and $g^i_*=0$ on $L\cap B_{1/2}$, $i=2, \ldots, m$. Thus, for $k$ large, $|M_i| \leq M$ universal, %(see Lemma .. in the Appendix)
$$|g^i_k - M_i U \eps_k^{3/4}| \leq C \eps_k^{3/4} \rho U \quad \text{in $B_{\frac 3 2 \rho}$}.$$
From the properties of the function $U$ and \eqref{contrad1}, we conclude that ($C$ universal)
\be\label{small}|g^i_k - M_i g^1_k \eps_k^{3/4}| \leq C\eps_k^{3/4}(\rho^{3/2}+ \eps_k^{1/2})\leq (\frac{\eps_k}{8})^{3/4} \rho^{1/2} \quad \text{in $B_{\frac 3 2\rho}$},\ee by choosing $\rho\leq \rho_0$ small enough universal and then $k$ large.

Now, set
$$\xi_k^1:= f^1 + \eps_k^{3/4} \sum_{i \neq 1} M_i f^i, \quad \bar f^1_k: = \frac{\xi^1_k}{|\xi_k^1|}.$$ Notice that,
\be\label{den}\bar f^1_k = \xi_k^1 + O(\eps_k^{3/2}).\ee
We claim that
\be\label{xi1} U\left(\langle x, \nu\rangle -\frac{\eps_k}{2}\rho, x_{n+1}\right) \leq G_k \cdot \bar f^1_k \leq |G_k| \leq U\left(\langle x, \nu\rangle +\frac{\eps_k}{2}\rho, x_{n+1}\right)\quad\text{in $B_\rho$},\ee
and
\be\label{xi2} |G_k - (G_k \cdot \bar f^1_k)\bar f^1_k| \leq  (\frac{\eps_k}{2})^{3/4} \rho^{1/2} \quad \text{in $B_\rho$}, \ee
thus reaching a contradiction.
Indeed, the upper bound in \eqref{xi1} is also a straightforward consequence of \eqref{flat_imp2}. For the lower bound, we observe that by \eqref{contrad1}, \eqref{contrad2} and \eqref{den},
$$|G_k -  U \bar f^1_k| \to 0, \quad \text{as $k\to \infty$},$$
while
\be\label{fbi}|G_k| \equiv 0 \quad \text{in $\{x_n \leq -\eps_k\}$}.\ee
Proposition \ref{positive1} then gives
\be\label{pos}G_k \cdot \bar f^1_k >0 \quad \text{in $B^+_{\frac 1 2}(G_k)$}.\ee
Moreover, by the definition of $\bar f^1_k$, \eqref{den} and \eqref{pos},
\be\label{bou}G_k \cdot \bar f^1_k \geq  (U\left(\langle x, \nu\rangle -\frac{\eps_k}{4}\rho, x_{n+1}\right) -C \eps_k^{3/2})^+.\ee Call
$$h(X):=(U\left(\langle x, \nu\rangle -\frac{\eps_k}{4}\rho, x_{n+1}\right) -C \eps_k^{3/2})^+.$$

Let $V$ be the harmonic function in $B_{\frac 3 2 \rho} \setminus \{\langle x, \nu \rangle \leq \frac{\eps_k}{4}\rho\}$ with
$$V=h \quad \text{on $\p B_{\frac 3 2 \rho}$}, \quad V=0 \quad \text{on $\{\langle x, \nu \rangle = \frac{\eps_k}{4}\rho\}$.}$$ Then, by \eqref{flat_imp2}-\eqref{pos}-\eqref{bou} and the comparison principle, we conclude that $$G_k \cdot \bar f^1_k \geq V \quad \text{in $B_{\frac 3 2 \rho}$}.$$
On the other hand, by Boundary Harnack,
$$V \geq (1-C\eps_k^{3/2})U\left(\langle x, \nu\rangle -\frac{\eps_k}{4}\rho, x_{n+1}\right) \quad \text{on $B_{\rho},$}$$
for $C>0$ universal, from which the required lower bound follows for $k$ large.

We are left with the proof of \eqref{xi2}. In view of \eqref{den}, we need to show that
$$ |G_k - (G_k \cdot \xi^1_k)\xi^1_k| \leq  \left(\frac{\eps_k}{4}\right)^{3/4} \rho^{1/2} \quad \text{in $B_\rho$}.$$ Call $$\bar G_k:=G_k - (G_k \cdot \xi^1_k)\xi^1_k. $$ Then,
$$|\bar g_k^1|= \eps^{3/4} |\sum_{i \neq 1} M_i g^i| \leq C \eps_k^{3/2},$$
in view of assumption \eqref{contrad2}. For the remaining components we use \eqref{small}, hence
$$|\bar g^i_k| = |g_k^i - \eps_k^{3/4}M_i g^1_k- \eps_k^{3/2} M_i\sum_{j\neq 1}M_j g^j_k| \leq \left(\frac{\eps_k}{8}\right)^{3/4} \rho^{1/2} + C \eps_k^{9/4},$$
and the desired bound follows for $k$ large.
\end{proof}

The proof of our main result Theorem \ref{mmm} now follows combining Proposition \ref{000} (and its corollary), Definition \ref{000}, Proposition \ref{regolar.visc}, and Theorem \ref{main.visc}. The statements about $n^*$ and the fact that  $\{|G|>0\} \cap \{x_{n+1}=0\}$ has locally finite perimeter follow exactly as in the scalar case (see \cite[Section 5]{DS1} and \cite[Theorem 1.2.]{EKPSS}).
\bibliography{bibliography}
\bibliographystyle{abbrv}
\end{document}